\documentclass[11pt]{amsart}
\usepackage{amssymb,amsthm,amsmath}
\parskip .1in
\usepackage[margin=1in]{geometry}
\usepackage{enumitem} 
\usepackage{tikz-cd}
\usepackage[all,cmtip]{xy}
\setlist[enumerate]{label={(\roman*)}}
\bibliographystyle{plain}
%\textheight=8inb
%\textwidth=6in

\iffalse{
\newtheorem{definition}{Definition}[section]
\newtheorem{theorem}{Theorem}[section]

\newtheorem{corollary}{Corollary}[section]
\newtheorem{problem}{Problem}[section]
\newtheorem{lemma}{Lemma}[section]
\newtheorem{hypothesis}{Hypothesis}[section]
\newtheorem{remark}{Remark}[section]
\newtheorem{conj}{Conjecture}[section]
\newtheorem{thm}{Theorem}[section]
\newtheorem{prop}[thm]{Proposition}
\newtheorem{example}[thm]{Example}
}\fi

\numberwithin{equation}{section}
\newtheorem{theorem}{Theorem}[section]

\newtheorem{prop}[theorem]{Proposition}
\newtheorem{corollary}[theorem]{Corollary}
\newtheorem{hypothesis}[theorem]{Hypothesis}
\newtheorem{lemma}[theorem]{Lemma}
\theoremstyle{definition}
\newtheorem{definition}[theorem]{Definition}

\newtheorem{remark}[theorem]{Remark}               %[theorem]

\theoremstyle{plain}

\newtheorem{thmalph}{Theorem}

\newcommand{\surj}{\twoheadrightarrow}
\newcommand{\inj}{\hookrightarrow}
\newcommand{\lra}{\longrightarrow}
\newcommand{\ol}{\overline}

\newcommand{\C}{\mathbb{C}}
\newcommand{\Q}{\mathbb{Q}}

\newcommand{\N}{\mathbb{N}}
\newcommand{\Z}{\mathbb{Z}}
\newcommand{\Zp}{{\mathbb{Z}_p}}

\newcommand{\gal}{\operatorname{Gal}}
\newcommand{\ind}{\operatorname{ind}}
\newcommand{\loc}{\operatorname{loc}}
\newcommand{\End}{\operatorname{End}}
\newcommand{\img}{\operatorname{image}}
\newcommand{\Sel}{\operatorname{Sel}}
\newcommand{\stub}{\operatorname{Stub}}
\newcommand{\Stub}{\operatorname{Stub}}
\newcommand{\len}{\operatorname{length}}
\newcommand{\frob}{\operatorname{Frob}}
\newcommand{\ord}{\operatorname{ord}}
\newcommand{\ES}{\operatorname{ES}}

\newcommand{\fa}{\mathfrak{a}}
\newcommand{\fb}{\mathfrak{b}}
\newcommand{\mm}{\mathfrak{m}}
\newcommand{\nn}{\mathfrak{n}}
\newcommand{\fl}{\mathfrak{l}}
\newcommand{\fq}{\mathfrak{q}}

\newcommand{\fP}{\mathfrak{P}}
\newcommand{\OO}{\mathcal{O}}
\newcommand{\cL}{\mathcal{L}}
\newcommand{\cF}{\mathcal{F}}
\newcommand{\cN}{\mathcal{N}}
\newcommand{\cX}{\mathcal{X}}

\newcommand{\cS}{\mathcal{S}}

\begin{document}

\title[Bipartite Euler systems for certain Galois representations]{Bipartite Euler Systems for certain Galois representations}
\author{Chandrakant Aribam}
\author{Pronay Kumar Karmakar}
\date{}
\subjclass[2000]{11G05, 11G40, 11R23}

\begin{abstract} 
Let $E/\Q$ be an elliptic curve with ordinary reduction at a prime $p$, and let $K$ be an imaginary quadratic field. The anticyclotomic Iwasawa main conjecture, depending upon the sign of the functional equation of $L(E/K,s)$, predicts the behavior of Selmer group of $E/\Q$ along the anticyclotomic tower of $K$. Some of the crucial ideas of Bertolini and Darmon on this conjecture have been abstracted by Howard into an axiomatic set-up through a notion of Bipartite Euler systems, assuming that $E[p]$ is an irreducible representation of $G_{K}$. We generalize this work by assuming only $(E[p])^{G_K}=0$. 
We use the results of Howard, Nekov\'a\v{r} and Castella \emph{et al}., along with those of Mazur and Rubin on Kolyvagin systems to show one divisibility of the anticyclotomic main conjecture, for both the signs. The other divisibility can be reduced to proving the nonvanishing of sufficiently many $p$-adic $L$-functions attached to a family of congruent modular forms.
\end{abstract}

\maketitle 

\tableofcontents

\section{Introduction}
Let $E/\Q$ be an elliptic curve defined over $\Q$ of conductor $N=N^+N^-$ and $K$ be an imaginary quadratic field extension of $\Q$, where $N^+$ ( resp. $N^-$) is divisible only by primes that split ( resp. remain inert) in $K$. Let $D_K$ be the discriminant of $K$ such that $(D_K,N)=1$. Let $L(E/K,s)$ be the Hasse-Weil $L$-function of $E$ and $\epsilon$ be the quadratic character attached to the extension $K/\Q$. It is known from the work of Gross and Zagier that the sign of the functional equation $L(E/K,s)$ is $-\epsilon(N^-)$. Now let $p$ be a rational prime such that $E$ has good ordinary reduction at $p$, $p\nmid6D_KN$ and $K^{ac}$ be the anticyclotomic $\Z_p$ extension of $K$ with $\Gamma^{-}=\gal(K^{ac}/K)$. Let $\Lambda=\Z_p[[\Gamma^{-}]]$ be the Iwasawa algebra of $\Gamma^{-}$. The \emph{anticyclotomic main conjecture for $E$} predicts, when $\epsilon(N^-)=1$ ( known as the indefinite case), that the $p^{\infty}$-Selmer group $\Sel_{p^{\infty}}(K^{ac},E[p^\infty])$ has $\Lambda$-corank one and the characteristic ideal of its $\Lambda$-cotorsion submodule can be expressed in terms of Heegner points, and when $\epsilon(N^-)=-1$ ( known as the definite case),  $\Sel_{p^{\infty}}(K^{ac},E[p^\infty])$ is predicted to be $\Lambda$-cotorsion with characteristic ideal related to the $p$-adic $L$-function. 

Beginning with the monumental and fundamental work of Bertolini and Darmon on this conjecture \cite{bd} in the definite case, much is known when the residual representation $\ol\rho_E:G_\Q \lra GL_2(\mathbb{F}_p)$ of $E$ is \emph{irreducible}, with surjective image, and the modular form $f$ attached to $E$ is $p$-isolated.  
Some aspects of the indefinite case are also treated in \cite{ber}. When the residual representation $\ol\rho_E$ is \emph{reducible}, some cases have been proved by Castella \emph{et al}. in \cite{cgls} in the indefinite case using the theory of Kolyvagin systems, under an assumption of the Heegner hypothesis where $N^-$ is required to be equal to 1. Each of these works depend on the work of many authors, but they all have been inspired by the work of Bertolini and Darmon \cite{bd}. 

Adapting the work of Mazur and Rubin (\cite{mr}) on Kolyvagin systems, the argument of Bertolini and Darmon has been axiomatized by Howard in \cite{hbes}, and it allows one to treat the definite and the indefinite cases simultaneously. Using this axiomatic set-up, in fact, the definite and indefinite cases are treated simultaneously, when $E[p]$ is irreducible, in \cite{cbc}. Howard's axiomatization is done through a notion of \emph{bipartite} Euler system, which we briefly explain.  Let $f$ be the modular form attached to $E/\Q$ (by the work of Wiles, Taylor-Wiles, Diamond). For a choice of positive integer $k$ one can define the set of $k$-admissible primes $\mathcal{L}_k$, all of which are inert in $K$, with the property that for any $\nn \in \mathcal{N}_k$ ( the set of squarefree products of primes in $\mathcal{L}_k$) there is a modular form $f_\nn$ of level $\nn N^-$ which is congruent to $f$ modulo $p^k$. One then considers a graph whose vertices are the elements in $\mathcal{N}_k$ with edges connecting $\nn$ to $\nn\fl$ for each $\fl \in \mathcal{L}_k$ coprime
to $\nn \in \mathcal{N}_k$. A vertex $\nn$, generated by $n\in\N$, is said to be \emph{definite} or \emph{indefinite} depending on whether  $\epsilon(nN^-)$ is $-1$ or $1$ respectively. In this way, we get a bipartition of the graph: every edge connects a definite vertex to an indefinite vertex.
In the axiomatic set-up in \cite{hbes}, Howard, as in \cite{mr}, considers, a rank two module $T$ defined over a principal Artinian local ring $A$, with maximal ideal $\mm$, on which there is a continuous action of $G_{K}$, and a perfect $G_K$-equivariant alternating pairing into $A(1)$. In this set-up, an admissible set of primes $\cL$ of $K$ is defined which consist of primes $\fl$ of $K$ which are inert and the Frobenius acts on $T$ with eigenvalues $N_{K/\Q}(\fl)$ and $1$.
The set of squarefree products of $\cL$ is denoted by $\cN$. Then to each $\nn$, a Selmer group $\Sel_{\cF(\nn)}$ is defined as a subspace of $H^1(K,T)$ by imposing the unramified conditions at primes $\fq\nmid\nn$ and an ordinary condition at $\fl\mid\nn$.

Further,
following \cite{mr}, Howard defines the sheaf of Euler systems by attaching to an even vertex $\nn$ the module $A$, to an odd vertex $\Sel_{\cF(\nn)}$, and to the edge $e(\nn,\nn\fl)$ an appropriate submodule of $H^1(K_\fl,T)$.
For each $k\in\N$, consider the $\Z_p/p^k{\Z_p}$-module $E[p^k]$. Then at an indefinite vertex the modular form $f_\nn$ allows one to define a cohomology class $\kappa_\nn \in \varprojlim_{m} H^1(K_m,E[p^k])$, which arises as the Kummer image of Heegner points on the abelian variety attached to $f_\nn$. At the definite vertex one can attach to $f_\nn$ a $p$-adic L-function $\lambda_\nn \in \Lambda/p^k{\Lambda}$. There are reciprocity laws relating the elements at any two adjacent vertices, and the families
\begin{equation*}
    \{\kappa_\nn | \nn \in \mathcal{N}_k, \nn \mbox{ indefinite } \}\quad  \{\lambda_\nn | \nn \in \mathcal{N}_k, \nn \mbox{ definite } \}
\end{equation*}
forms a \emph{bipartite Euler system}. 
Howard shows how  a sufficiently non-trivial bipartite Euler system can be used to bound the lengths of $\Sel_{\cF(\nn)}$, and proves a rigidity theorem which 
gives a uniform estimate for those bounds for all the vertices.

In this manuscript, we extend the work of Howard in  \cite{hbes} where it is assumed that $\ol\rho_E$ is irreducible to the situation where $(E[p])^{G_K}=0$. Note that $(E[p])^{G_K}=0$ is automatically satisfied when $E[p]$ is irreducible representation of $G_K$.
In trying to do this, we are faced with a problem of not having a Chebotarev Density Theorem. In the \emph{irreducible} case, this is crucial to 
getting the bounds on Selmer groups from an Euler system.  
In the case when $(E[p])^{G_K}=0$, we use results of \cite{cgls}, which further relies on the work of Nekov\'a\v{r} \cite{nekovar}, that allows us to have a result similar to the Chebotarev Density Theorem, but for the twist $T=E[p^k]\otimes\alpha$ for some non-trivial character $\alpha$ of $\Gamma^-$ (see \S \ref{section-key} below). We write $A=\Z_p/p^k\Z_p$.
\begin{thmalph}[Lemma \ref{key-lemma}] Let $\len(A)>{\varepsilon_0}=r(C_1+C_2+C_{\alpha})$. Then for any $\nn\in \mathcal{N}$ and any cyclic free $A$-module $C \subset \Sel_{\mathcal{F}(\nn)}$, there are infinitely many $\fl \in \mathcal{L}$ such that $\loc_\fl$ takes $C$ isomorphically onto $H^1_{unr}(K_\fl,T)$.
\end{thmalph}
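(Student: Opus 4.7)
The plan is to adapt the Mazur--Rubin/Howard strategy for producing useful Kolyvagin primes to the setting where $E[p]$ may be reducible, by substituting the ordinary Chebotarev density theorem with the density result in \S\ref{section-key}. Fix a generator $c\in C$, regarded as a class in $H^1(K,T)$, and let $L/K$ be the smallest extension over which $T$ becomes a trivial Galois module and $c$ restricts to a homomorphism $\phi_c\colon\gal(\ol K/L)\to T$. For any $\fl\in\cL$, admissibility means $\frob_\fl$ acts on $T$ with eigenvalues $1$ and $N_{K/\Q}(\fl)$, so that $H^1_{\mathrm{unr}}(K_\fl,T)$ is a cyclic free $A$-module of rank one and is canonically identified with the $\frob_\fl{=}1$-eigenline of $T$; consequently $\loc_\fl(c)$ generates $H^1_{\mathrm{unr}}(K_\fl,T)$ if and only if $\phi_c(\frob_\fl)$ projects to a unit in that eigenline modulo $\mm$.

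Producing such $\fl$ is therefore equivalent to producing Frobenii in $\gal(L/K)$ satisfying (a) the eigenvalue condition defining $\cL$ in $\gal(K(T)/K)$ and (b) a non-vanishing condition modulo $\mm$ on the $T$-valued coordinate in $\gal(L/K(T))$. Were $\ol\rho_E$ irreducible, one would finish with Chebotarev applied to a positive-density conjugacy class in $\gal(L/K)$. In the present weaker hypothesis $(E[p])^{G_K}=0$, this role is played by the density substitute of \S\ref{section-key}, derived from the bigness analysis of Nekov\'a\v{r} \cite{nekovar} and Castella \emph{et al.}\ \cite{cgls}: the twist by the nontrivial character $\alpha$ ensures that the Galois image on $T=E[p^k]\otimes\alpha$ is large enough, and the cohomological defects obstructing condition (b) are bounded by the constants $C_1$, $C_2$, $C_\alpha$.

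The hypothesis $\len(A)>r(C_1+C_2+C_\alpha)$ is exactly what is required so that the bigness defect, measured by $r(C_1+C_2+C_\alpha)$, leaves nontrivial room modulo $\mm$: the relevant coset in $\gal(L/K(T))$ is non-empty, and the density substitute then produces infinitely many $\fl\in\cL$ fulfilling (a) and (b). For each such $\fl$, the map $\loc_\fl\colon C\to H^1_{\mathrm{unr}}(K_\fl,T)$ is surjective between cyclic free $A$-modules of the same rank over a principal Artinian local ring, hence an isomorphism. The main obstacle is the quantitative reckoning in the middle step: tracking how the three bigness defects interact with the residue class of $c$ modulo $\mm$, in the absence of the clean surjectivity afforded by strict irreducibility, is the technical heart of the argument and is what forces the explicit bound $\varepsilon_0=r(C_1+C_2+C_\alpha)$.
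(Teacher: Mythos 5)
Your sketch correctly identifies the general shape of the argument (replace Chebotarev by the Nekov\'a\v{r}--Castella~\emph{et al.}\ density substitute), but it leaves out the actual mechanism the paper uses and makes a claim that the available density result does not support. The input to the density substitute (Theorem~\ref{order}) is not a single class $c$; it requires producing \emph{two} cocycles $c_1,c_2\in H^1(K,T_\alpha)$ such that $Ac_1+Ac_2$ contains a submodule isomorphic to $\mm^{d_1}A\oplus\mm^{d_2}A$. Verifying this is the entire content of Proposition~\ref{cocycle}: the paper splits according to whether $M_\nn\neq 0$ (in which case the two copies of $M_\nn$ in $\Sel_{\cF(\nn)}\cong A^{e(\nn)}\oplus M_\nn\oplus M_\nn$ supply $c_1,c_2$) or $e_\nn=1,\ M_\nn=0$ (in which case one must first argue, via Proposition~\ref{selmer-facts} and the length bookkeeping, that either $\Sel_{\cF(\nn)}\neq H^1(K,T)$ so a class $c_2$ can be chosen outside the Selmer group, or the localization is already nonzero). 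Your proposal never constructs these auxiliary cocycles and so never brings the density substitute to bear; you acknowledge as much when you call the quantitative step ``the technical heart'' left unaddressed.

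Separately, the claim that the density substitute yields primes $\fl$ with $\loc_\fl(c)\not\equiv 0\pmod{\mm}$ (your condition~(b)) is stronger than what Theorem~\ref{order} actually gives, namely $\ord(\loc_\fl(c))\geq\ord(c)-\varepsilon_0$. When $\varepsilon_0>0$ this leaves open the possibility that $\loc_\fl(c)$ lands in $\mm H^1_{unr}(K_\fl,T)$, so one cannot deduce from it alone that the localization is a unit generator. Your final ``surjective hence isomorphism'' step therefore rests on a non-vanishing-modulo-$\mm$ input you have not produced, and no appeal to the $\varepsilon_0$-bound bridges that gap on its own. Thus the proposal differs from the paper's route both in omitting the construction of $c_1,c_2$ (the genuinely new work once irreducibility is dropped) and in overstating what the density substitute delivers.
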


Taking $\varepsilon_0=0$ in the above theorem, we can recover the \cite[Lemma 2.3.3]{hbes}. 
However, relaxing the irreducibility condition ends up in introducing an error in the bounds for lengths of Selmer groups ( see Theorem \ref{bound}).
\begin{thmalph}[Theorem \ref{bound}]
Let $\len(A)>\varepsilon_0$. 
For any free Euler system of odd type for $(T,\mathcal{F},\mathcal{L})$, ${\pi}^\varepsilon\lambda_\nn \in \stub_\nn$ for every $\nn \in \mathcal{N}^{even}$, and ${\pi}^\varepsilon\kappa_\nn \in \stub_\nn$ for every  $\nn \in \mathcal{N}^{odd}$. Equivalently, in terms of the $A$-module $M_\nn$ in the decomposition 
$\Sel_{\cF(\nn)}\cong A^{e(\nn)}\oplus M_\nn\oplus M_\nn$, we have
\begin{equation*}
      \len(M_\nn) \leq \begin{cases}
          \ind(\lambda_\nn,A)+\varepsilon &\mbox{ if }\nn \in \mathcal{N}^{even}\\
          \ind(\kappa_\nn,\Sel_{\mathcal{F}(\nn)}(K,T))+\varepsilon &\mbox{ if } \nn \in \mathcal{N}^{odd}.
      \end{cases}
\end{equation*}
\end{thmalph}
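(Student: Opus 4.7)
The plan is to adapt Howard's proof of the analogous bound in \cite{hbes} to our setting, with the crucial substitution of Lemma \ref{key-lemma} for \cite[Lemma 2.3.3]{hbes}. Howard's argument produces the stub containment by an inductive descent across adjacent vertices of the bipartite graph, using the two reciprocity laws to transport divisibility information between the Heegner side and the $p$-adic $L$-function side; since the new key lemma produces useful primes only after tolerating a loss of $\pi^{\varepsilon_0}$, tracking this loss through the descent is what gives rise to the error $\pi^{\varepsilon}$ in the statement.

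First I would fix the self-dual decomposition $\Sel_{\mathcal{F}(\nn)} \cong A^{e(\nn)}\oplus M_\nn\oplus M_\nn$ for each $\nn\in\mathcal{N}$ and proceed by simultaneous induction on $\len(M_\nn)$; the base case $M_\nn=0$ forces $\stub_\nn$ to coincide with the free summand, so there is nothing to prove. For the inductive step at an odd $\nn$ with $M_\nn\neq 0$, I would choose a cyclic free submodule $C\subset\Sel_{\mathcal{F}(\nn)}$ detecting a generator of $M_\nn$ and apply Lemma \ref{key-lemma}, which is available precisely because $\len(A)>\varepsilon_0$: this furnishes a prime $\fl\in\mathcal{L}$ coprime to $\nn$ for which $\loc_\fl$ carries $C$ isomorphically onto $H^1_{unr}(K_\fl,T)$. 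A global duality argument, identical to Howard's, then shows that $\len(M_{\nn\fl})<\len(M_\nn)$, so the inductive hypothesis applies at the adjacent even vertex $\nn\fl$. The first and second reciprocity laws of the bipartite Euler system identify $\loc_\fl\kappa_\nn$ with (a unit multiple of) $\lambda_{\nn\fl}$ at the transverse position, and chasing through these identifications yields the stub containment at $\nn$ up to a single factor of $\pi^{\varepsilon_0}$. The even case is entirely dual, starting from $\lambda_\nn$ instead of $\kappa_\nn$.

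The main obstacle is controlling the accumulated error through the induction. A naive implementation would contribute a factor of $\pi^{\varepsilon_0}$ at each of the $\len(M_\nn)$ descent steps, giving an error that grows with $\len(M_\nn)$ and renders the length inequality vacuous. To avoid this I would observe that in Howard's descent the key lemma is invoked only once per inductive step in order to certify the \emph{existence} of an admissible prime $\fl$ with the required localization property, and once this prime is fixed the reciprocity identities hold exactly; hence the $\pi^{\varepsilon_0}$ cost is paid not in each cohomology class separately but in the choice of test prime, and can be absorbed into a single multiplicative factor at the end of the descent. One then repeats Howard's rigidity argument with the modified stub $\pi^{\varepsilon_0}\stub_\nn$ to propagate a uniform bound across all vertices, producing an additive error $\varepsilon$ depending only on $\varepsilon_0 = r(C_1+C_2+C_\alpha)$. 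The length inequality $\len(M_\nn)\leq \ind(\kappa_\nn,\Sel_{\mathcal{F}(\nn)}(K,T))+\varepsilon$ (and the even analogue) then follows immediately from unwinding the definition of $\stub_\nn$ and of the index $\ind$.
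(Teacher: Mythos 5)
Your proposal has a genuine gap centred on the choice of induction variable and the handling of even vertices. You induct on $\len(M_\nn)$ and claim that after picking $\fl$ via Lemma \ref{key-lemma} at an odd vertex $\nn$, a duality argument gives $\len(M_{\nn\fl})<\len(M_\nn)$. This is false: by Proposition \ref{length+-}, for odd $\nn$ one has $\len(M_\nn)=\len(M_{\nn\fl})-b$, and since $\loc_\fl$ carries the free submodule $C$ onto $H^1_{unr}(K_\fl,T)$ we get $a=\len(A)$, hence $b=0$, hence $\len(M_{\nn\fl})=\len(M_\nn)$, not strictly smaller. The quantity that strictly decreases in this step is $\rho(\nn)=\dim_{A/\mm}\Sel_{\cF(\nn)}(K,T/\mm T)$ (via Corollary \ref{key-corollary} and Proposition \ref{selmer-facts}(iii)), and that is what the paper inducts on. Your inductive scheme therefore does not terminate.

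The second gap is that the even case is not ``entirely dual''. For $\nn\in\cN^{even}$, $e(\nn)=0$ and $\Sel_{\cF(\nn)}\cong M_\nn\oplus M_\nn$ need not contain any free cyclic $A$-submodule, so Lemma \ref{key-lemma} cannot be invoked to produce a useful prime. The paper gets around this by a case split on $\exp(M_\nn)$: when $\exp(M_\nn)\leq\varepsilon_0$ the bound $\len(M_\nn)\leq\rho_0\varepsilon_0=\varepsilon$ is automatic with no descent step at all, and when $\exp(M_\nn)>\varepsilon_0$ one can apply Theorem \ref{order} to a torsion class of large order to get $\loc_\fl(\Sel_{\cF(\nn)}(K,T/\mm T))\neq0$. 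This case split is also the actual mechanism preventing the error from accumulating: when $\exp(M_\nn)>\varepsilon_0$ the inductive step is \emph{exact} (the reciprocity laws carry the fixed error $\varepsilon$ through unchanged), and when $\exp(M_\nn)\leq\varepsilon_0$ there is no step. Your explanation --- that the $\pi^{\varepsilon_0}$ cost ``is paid in the choice of test prime'' and can be ``absorbed into a single multiplicative factor at the end'' --- does not account for this; in particular, invoking the rigidity theorem to propagate the bound cannot work here, since the rigidity theorem is proved \emph{after} and \emph{using} the present length bound.
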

The second problem that we faced is in trying to get an analogous rigidity theorem, as in \cite[Theorem 2.5.1]{hbes} ( see Theorem \ref{Rigidity}) assuming only $(E[p])^{G_K}=0$, more precisely, in trying to characterize the core vertices (see Def 2.4.2 of \cite{hbes}). However, the notion of a core vertex, as in \cite{mr} or \cite{hbes}, as a vertex whose Stub module is $A$ (more precisely, $\nn$ is core if $\len(M_\nn)=0$), is not suitable as we get an error term $\varepsilon_0$ ( in the \emph{irreducible} case this error term is zero). Therefore, we look at those vertices $\nn$, such that $\len(M_\nn)$ is minimal. Calling this length $u$, the Stub module is isomorphic to $\mm^uA$, and we call these vertices \emph{absolute core vertices} ( see Def \ref{def-core}). We show that this is well defined by characterizing these vertices in terms of vanishing under localizations and we call them \emph{universally trivial} vertices ( see Def \ref{universally-trivial}). Using these absolute core vertices we show that the graph we have considered is path connected ( see Proposition \ref{path}). As a result we get our desired Rigidity Theorem in Theorem \ref{Rigidity}.

For each $\nn\in\cN_k$, consider the ordinary conditions at primes $\fl\mid\nn$ which allows us to define the Selmer groups $\cS_{\nn}(K^{ac},E[p^k]) \subset \varprojlim_{m} H^1(K_m,E[p^k])$ for $1\leq k\leq\infty$ (see Def \ref{compact-selmer} below). Writing $\cS=\cS_1(K^{ac},T_p)$ and $X$ for the Pontryagin dual of $\Sel(K^{ac},E[p^\infty])$, and assuming that the families of $\kappa_\nn$ and $\lambda_\nn$ are compatible as $k$ varies, we have distinguished elements $\lambda^\infty\in\Lambda$ and $\kappa^\infty\in \cS$.

We then have the \emph{Main theorem} which gives us a bound on the Selmer groups and an equality condition. As a consequence it  can be applied to the anticyclotomic Iwasawa main conjecture for certain \emph{reducible} Galois representations. 
\begin{thmalph}[Theorem \ref{Main Theorem}]
Let the distinguished elements $\lambda^\infty, \kappa^\infty$ be nonzero, and let $X_{{tor}}$ denote the $\Lambda$-torsion submodule of $X$. Then, we have,
\begin{enumerate}
\item The rank formulas:
\begin{equation*}
    \operatorname{rank}_{\Lambda}\cS = \operatorname{rank}_{\Lambda}X = 
    \begin{cases} 
    0 \quad \mbox{if} \quad \epsilon(N^-)=-1\\
    1 \quad \mbox{if} \quad \epsilon(N^-)=1.
    \end{cases}
\end{equation*}
\item For any height one prime $\fP$ of $\Lambda$, we have
\begin{equation*}
    \ord_{\fP}(\operatorname{char}{(X_{{tor}})}) \leq 2. 
    \begin{cases} \ord_{\fP}(\lambda^{\infty}) & \mbox{if} \quad \epsilon(N^-)=-1\\
    \ord_{\fP}(\operatorname{char}{(\cS/\Lambda{\kappa}^{\infty})}) & \mbox{if} \quad \epsilon(N^-)=1.
    \end{cases}
\end{equation*}
\item The inequality (ii) is an equality if the following condition is satisfied: there exists a positive integer $s$ such that for all $t \geq s$ the set
\begin{equation*}
    \{\lambda_{\nn} \in \Lambda/p^t{\Lambda} | \nn \in \mathcal{N}^{definite}_t \}
\end{equation*}
contains an element which is nonzero in $\Lambda/(\fP, p^s)$.
\end{enumerate}
\end{thmalph}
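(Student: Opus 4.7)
The plan is to deduce all three parts from the finite-level bound in Theorem~\ref{bound}, applied at each quotient $A = \Z_p/p^k\Z_p$, together with a limit argument passing from finite level to the Iwasawa algebra $\Lambda$. Throughout, one specialises the axiomatic bipartite Euler system machinery to the concrete family of Heegner classes $\kappa_\nn$ and $p$-adic $L$-functions $\lambda_\nn$ attached to the congruent modular forms $f_\nn$, so that the distinguished elements $\lambda^\infty$ and $\kappa^\infty$ arise as inverse limits of the finite-level data.

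For part (i) I would argue sign by sign. When $\epsilon(N^-) = -1$, the hypothesis $\lambda^\infty \neq 0$ means that for every sufficiently large $k$ some even vertex $\nn$ has $\lambda_\nn$ of finite index in $\Lambda/p^k\Lambda$; Theorem~\ref{bound} then forces $\Sel_{\cF(\nn)}$ to be a finite $A$-module, and a standard control theorem relating $\cS_k$ to $\cS$ (and dually to $X$) yields $\operatorname{rank}_\Lambda X = \operatorname{rank}_\Lambda \cS = 0$. When $\epsilon(N^-) = 1$, the class $\kappa^\infty \neq 0$ supplies a rank-one free $\Lambda$-submodule of $\cS$, and Nekov\'a\v{r}'s duality/parity results, combined with Theorem~\ref{bound} applied at odd vertices to control the residual torsion, force $\operatorname{rank}_\Lambda X = 1$.

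For part (ii), the decomposition $\Sel_{\cF(1)} \cong A^{e(1)} \oplus M_1 \oplus M_1$ isolates the non-free part $M_1 \oplus M_1$, whose total $A$-length is $2\len(M_1) \leq 2(\ind + \varepsilon)$, with $\ind$ equal to $\ind(\lambda_1, A)$ or $\ind(\kappa_1, \Sel_{\cF(1)})$ according to the sign. A control and comparison argument identifies the inverse limit over $k$ of these non-free parts with $X_{tor}$ up to pseudo-isomorphism; ordering at the height-one prime $\fP$ and then letting $k \to \infty$ absorbs the fixed additive error $2\varepsilon$ into the $p^k$-adic noise, yielding the claimed inequality with the factor $2$ coming from the repeated appearance of $M_1$.

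For part (iii), the reverse direction uses the rigidity theorem (Theorem~\ref{Rigidity}) at an absolute core vertex $\nn$, where $\len(M_\nn)$ attains its minimum value $u$, making the bound in Theorem~\ref{bound} sharp up to the global constant $u + \varepsilon$. The hypothesis that for all $t \geq s$ the family $\{\lambda_\nn\}$ contains an element nonzero in $\Lambda/(\fP, p^s)$ guarantees that $\ind(\lambda_\nn, A)$ attains the lower bound forced by $\ord_\fP(\lambda^\infty)$ at some vertex, so the two sides meet at $\fP$; a parallel argument with $\kappa_\nn$ handles the indefinite case. The chief obstacle I foresee is controlling the error $\varepsilon_0 = r(C_1 + C_2 + C_\alpha)$ uniformly in $k$ and in the choice of $\fP$, so that neither the defect in (ii) nor the core-vertex defect in (iii) survives the limit; since $\varepsilon_0$ is an absolute constant independent of $k$, fixing $\fP$ first and then taking $k$ sufficiently large should render it negligible with respect to $\ord_\fP$.
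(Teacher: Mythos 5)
Your outline correctly identifies the ingredients — specialize the finite-level bound of Theorem~\ref{bound} at each level $k$, pass to the Iwasawa algebra, and use the rigidity theorem for the equality claim — but there is a genuine gap in the mechanism you propose for eliminating the additive error term $\varepsilon$, and that gap affects both parts (ii) and (iii).

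You claim that "letting $k \to \infty$ absorbs the fixed additive error $2\varepsilon$ into the $p^k$-adic noise," and later that "fixing $\fP$ first and then taking $k$ sufficiently large should render it negligible with respect to $\ord_\fP$." This is not correct as stated. The quantity $\ord_\fP(\operatorname{char}(X_{tor}))$ is a fixed nonnegative integer, and at each finite level Theorem~\ref{bound} produces an inequality of the form $\len(M_1)\leq\ind(\lambda_1)+\varepsilon$ with $\varepsilon$ independent of $k$; passing to the limit in $k$ alone just reproduces the inequality with the same unwanted $+\varepsilon$, giving $\ord_\fP(\operatorname{char}(X_{tor}))\leq 2\ord_\fP(\lambda^\infty)+2\varepsilon$ rather than the clean bound. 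What actually removes $\varepsilon$ in the paper is the Mazur--Rubin auxiliary-prime device: for a height-one prime $\fP$ generated by a distinguished polynomial $g$, one introduces the family $\fP_m=(g+p^m)$ (or $((\gamma-1)^m+p)$ when $g$ generates $p\Lambda$), for which $\Lambda/\fP_m\cong\Lambda/\fP$ by Hensel's lemma. Then $\len_{\Z_p}(\Sel_{\cF_{\fP_m}}(K,W_{\fP_m})/\Sel_{\cF_{\fP_m}}(K,W_{\fP_m})_{\operatorname{div}})$ grows like $m\cdot\operatorname{rank}_{\Z_p}(\OO_\fP)\cdot\ord_\fP(\operatorname{char}(X_{tor}))$ up to $O(1)$, and similarly for the $\lambda^\infty$- and $\kappa^\infty$-sides. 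Dividing by $m$ and taking $m\to\infty$ is what makes the fixed constant $2\varepsilon$ (and the term $2e\,\delta_{\fP_m}(k+\varepsilon)\geq 0$) negligible. This is a limit in the auxiliary parameter $m$, not in the Artinian level $k$, and without it the bound does not close.

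A second, smaller gap: your sketch of part (i) leans on a "standard control theorem" and, in the indefinite case, on "Nekov\'a\v{r}'s duality/parity results." The paper's actual route is more specific: it uses the Shapiro-lemma comparison maps $\cS/\fP\cS\to\Sel_{\cF_\fP}(K,T_\fP)$ and $\Sel_{\cF_\fP}(K,W_\fP)\to\Sel(K^{ac},E[p^\infty])[\fP]$ (Proposition~\ref{Shap}), which require checking that the induced Selmer structure is Cartesian under the weaker hypothesis $(E[p])^{G_K}=0$, together with Proposition~\ref{Sel length} at the specialized prime. Nekov\'a\v{r}'s results enter only indirectly through Theorem~\ref{order} and the error-term bookkeeping, not through any duality/parity statement at the $\Lambda$-module level. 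Your part (iii) sketch is closer to the mark, but the uniform boundedness you need is of $\delta_{\fP_m}(k)$ in both $m$ and $k$; the hypothesis about $\lambda_\nn$ having nonzero image in $\Lambda/(\fP,p^s)$ is used precisely to bound these $\delta_{\fP_m}(k)$ uniformly, again via the auxiliary-prime family, not directly via absolute core vertices.
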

\section{Selmer Groups}\label{selmer} 
In this section, we recall some basic facts about Selmer groups over Artinian rings, as in Mazur-Rubin \cite{mr} and  Howard \cite{hbes}.
The decomposition of the local cohomology groups and the definition of the modified Selmer groups are already present in these references. Much of the notations introduced in \cite{hbes} are also retained.
The difference is in Hypothesis \ref{key-hypothesis}, where we relax the hypothesis that the residual representation is \emph{irreducible}.

Throughout, we fix an algebraic closure $\ol\Q$, a prime $p>2$, and embedding $\iota_p:\ol\Q\inj \ol\Q_p$ and $\iota_\infty:\ol\Q\inj\C$. 
Let $G_\Q=\gal(\ol\Q/\Q)$ and $G_K=\gal(\ol\Q/K)$, and for each place $w$ of $K$ let $G_w\subset G_K$ denote the decomposition group, and $I_w$ its inertia subgroup. We denote the arithmetic Frobenius at the place $w$ by $\frob_w\in G_w/I_w$. For the prime $v\mid p$, let $G_v$ denote the decomposition group which is identified with $\gal(\ol\Q_p/\Q_p)$ via the embedding $\iota_p$. For any place
$w$ of $K$, the maximal unramified extension of $K_w$ is denoted
by $K_w^{unr}$.

Let $K^{ac}$ denote the \emph{anticyclotomic} $\Z_p$-extension of $K$, and $\Gamma^-=\gal(K^{ac}/K)$. By $\Lambda=\Z_p[[\Gamma^-]]$ we
denote the anticyclotomic Iwasawa algebra. By fixing a topological generator $\gamma\in\Gamma^-$, we have a non-canonical 
isomorphism $\Lambda\cong\Z_p[[T^-]]$ via $T^-=\gamma-1$.

Let $A$ be a \emph{principal Artinian local} ring with maximal ideal $\mm=\pi A$ and residue characteristic $p\geq 3$. It is to be noted that the quotient of a discrete valuation ring by some $k$-th power of its maximal ideal is such a ring; conversely, it is not difficult to show that every principal local Artinian ring is a quotient of a discrete valuation ring.
Let $T$ be a free $A$-module of rank two equipped with a continuous
(for the discrete topology) action of $G_K=\gal(\ol{K}/K)$ for some number field
$K$. We assume that $T$ admits a perfect, $G_K$-equivariant, alternating $A(1)$-valued
pairing. 
\begin{definition}
For each prime $w$ of $K$ where $T$ is unramified, and for the maximal unramified extension $K^{unr}$ of $K$ the \emph{unramified cohomology} is defined by
\begin{equation*}
    H^1_{unr}(K_w,T):=ker[H^1(K_w,T)\lra H^1(K_w^{unr},T)].
\end{equation*}
\end{definition}
\begin{definition} 
\begin{enumerate}
\item[(i)] 
A \emph{Selmer structure} $(\mathcal{F},{\Sigma}_{\mathcal{F}})$ on $T$ is a finite set of places ${\Sigma}_{\mathcal{F}}$ of $K$ containing the archimedean places, the primes at which $T$ is ramified, all the primes above $p$; and, for every place $w$ of $K$, a choice of submodule 
 $H^1_{\mathcal{F}}(K_w, T){\subset}H^1(K_w,T)$ such that $H^1_{\mathcal{F}}(K_w, T)$=$H^1_{unr}(K_w,T)$ for all $w{\notin}{\Sigma}_{\mathcal{F}}$. 
\item[(ii)]  
A Selmer structure $\mathcal{F}$ is \emph{self-dual} if the submodule $H^1_{\mathcal{F}}(K_w,T)$ is maximal isotropic under the (symmetric) local Tate pairing
 \begin{equation*}
H^1(K_w,T)\times H^1(K_w,T){\lra}H^2(K_w,A(1)){\cong}A
 \end{equation*}
for every finite place $w{\in}{\Sigma}_{\mathcal{F}}$.
\item[(iii)] 
Under the embedding from $\ol{K}$ to $\ol K_w$ for every place $w$,
which is fixed, we consider the \emph{localization} maps which is 
given by \emph{restriction}:
\begin{equation*}
    \loc_w:H^1(K,T){\lra}H^1(K_w,T).
\end{equation*}  
\item[(iv)] Define the \emph{Selmer module or group}
$\Sel_{\mathcal{F}}=\Sel_{\mathcal{F}}(K,T)$ associated to $\mathcal{F}$ by the exactness of
\begin{equation*}
  0{\lra}\Sel_{\mathcal{F}}{\lra}H^1(K,T)\stackrel{\oplus_w\loc_w}{\lra}{\bigoplus}H^1(K_w,T)/H^1_{\mathcal{F}}(K_w,T),
\end{equation*}
where we still use the notation $\loc_w$ for the localization maps composed with the quotient maps.
\end{enumerate}
\end{definition}
\begin{remark}\label{Note}
\begin{enumerate}
\item A Selmer structure $(\cF,\Sigma_{\cF})$ on $T$ induces a Selmer structure in  a natural way on a $G_K$-submodule (resp. quotient) $S$ of $T$. If $S$ is a submodule of $T$, then the preimage of $H^1_{\cF}(K_w,T)$ under $H^1(K_w,S){\lra}H^1(K_w,T)$ (resp. the image of $H^1_{\cF}(K_w,T)$ under $H^1(K_w,T){\lra}H^1(K_w,S)$) for every place $w$ of $K$ defines a Selmer structure. By \cite[Lemma 1.1.9]{mr}, $H^1_{\cF}(K_w, S)=H^1_{unr}(K_w,S)$ for every $w{\notin}{\Sigma}_{\cF}$, and so this is well-defined. 
\item For $p{\neq}2$, we have $H^1(K_w,T)=0$ for all $w$ archimedean. 
\end{enumerate}
\end{remark}

\begin{definition}
A set of primes $\mathcal{L}$ of $K$ which is disjoint from ${\Sigma}_{\mathcal{F}}$ and satisfies
\begin{enumerate}
    \item for all $\fl{\in}{\mathcal{L}}, N(\fl){\not\equiv}1\pmod p$,
    \item for all $\fl{\in}{\mathcal{L}}$, the Frobenius $\frob_\fl$ acts on $T$ with eigenvalues $N(\fl)$ and $1$,
\end{enumerate}
is called an \emph{admissible set of primes} and is denoted by $\cL$.
\end{definition}
\begin{definition}
For each $\fl\in\cL$, 
$T{\cong}A{\oplus}A(1)$ as a $G_{K_\fl}$-module, and that the decomposition is unique. Then 
the \emph{ordinary cohomology} is defined by 
\begin{equation*}
H^1_{ord}(K_\fl,T):=\operatorname{image} [H^1(K_\fl,A(1)){\lra}H^1(K_\fl,T)].    
\end{equation*}
\end{definition}
By the local Tate duality, $H^1_{\mathcal{F}}(K_\fl,T)=H^1_{unr}(K_\fl,T)$ is maximal isotropic
for all $\fl\notin\Sigma_{\mathcal F}$.
 For each $\fl\in\cL$, by Lemma \cite[Lemma 2.2.1]{hbes}, these cohomology groups appear as direct summands of $G_{K_\fl}$-modules of the local cohomology groups:
\begin{equation}\label{decompo}
H^1(K_\fl,T){\cong}H^1_{unr}(K_\fl,T){\oplus}H^1_{ord}(K_\fl,T).
\end{equation}
Here each summand is free of rank one over $A$ and is maximal isotropic under the local Tate pairing.
 
 The Selmer structures above may be modified by introducing new primes.
 \begin{definition} Let $\mathcal{N}$ denote the set of squarefree products of primes in $\mathcal{L}$. For any $\fa\fb\mathfrak{c}\in\mathcal{N}$ we define a Selmer structure $({\mathcal{F}}^{\fa}_\fb(\mathfrak c), {\Sigma}_{{\mathcal{F}}^{\fa}_{\fb}(\mathfrak c)})$ as follows: ${\Sigma}_{{\mathcal{F}}^\fa_\fb(\mathfrak c)}$ is ${\Sigma}_{\mathcal{F}}$ together with all prime divisors of $\fa\fb\mathfrak{c}$,
\begin{equation*}
H^1_{{\mathcal{F}}^\fa_\fb(\mathfrak c)}(K_w,T)=
H^1_{\mathcal{F}}(K_w,T)
\end{equation*} 
for $w$ prime to $\fa\fb\mathfrak{c}$, and
\begin{equation*}
H^1_{{\mathcal{F}}^\fa_\fb(\mathfrak c)}(K_\fl,T)=
\begin{cases}
H^1(K_\fl,T) &\mbox{ if } \fl|\fa,\\ 
0 & \mbox{ if } \fl|\fb \\
H^1_{ord}(K_\fl,T) &\mbox{ if }\fl|\mathfrak c.
\end{cases}
\end{equation*}
Whenever any one of $\fa,\fb,\mathfrak c$ is the empty product, it is omitted from the notation.
\end{definition}
\begin{definition}\label{cartesian} A Selmer structure $(\mathcal{F}, {\Sigma}_{\mathcal{F}})$ is \emph{cartesian} if for every quotient $T/{\mm^i}T$ of $T$, every place $w{\in}{\Sigma}_{\mathcal{F}}$, and any generator ${\pi}{\in}\mm$, the isomorphism
\begin{equation*}
    T/{\mm^i}T {\lra} T[\mm^i]
\end{equation*}
induces an isomorphism $H^1_{\mathcal{F}}(K_w,T/\mm^iT){\cong}H^1_{\mathcal{F}}(K_w,T[\mm^i])$.
\end{definition}
For the remainder of this section, we make the following assumptions:
\begin{hypothesis}\label{key-hypothesis} 
\begin{enumerate}
    \item $(T/\mm T)^{G_K}=0$,
    \item $\mathcal{F}$ is cartesian.
\end{enumerate}
\end{hypothesis}
\begin{lemma}\label{sel[m]}
The Selmer structure $\mathcal{F}(\nn)$ is cartesian for any $\nn\in{\mathcal{N}}$. For any choice of generator ${\pi}{\in} \mm$ and any $0{\leq}i{\leq}\len(A)$, the composition 
\begin{equation*}
T/{\mm^i}T\xrightarrow{\pi^{\len{A}-i}} T[\mm^i]{\lra}T
\end{equation*}
induces isomorphisms $\Sel_{\mathcal{F}(\nn)}(K,T/\mm^i){\cong}\Sel_{\mathcal{F}(\nn)}(K,T[\mm^i]){\cong}\Sel_{\mathcal{F}(\nn)}(K,T)[\mm^i]$.
\end{lemma}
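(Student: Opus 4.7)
The plan splits naturally into two pieces: first, verifying that $\mathcal{F}(\nn)$ remains cartesian after modifying $\mathcal{F}$ at primes dividing $\nn$, and second, deriving the two Selmer isomorphisms from standard cohomological machinery combined with the cartesian property.

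For the first piece, I would check the cartesian condition place by place. At places outside $\Sigma_{\mathcal{F}(\nn)}$ the local condition is unramified, which is cartesian by \cite[Lemma 1.1.9]{mr}. At places of $\Sigma_{\mathcal{F}}$ the condition agrees with that of $\mathcal{F}$, which is cartesian by Hypothesis \ref{key-hypothesis}(ii). At $\fl \mid \nn$ the local condition is one of $H^1(K_\fl, T)$, $0$, or $H^1_{ord}(K_\fl, T)$; the first two are trivially cartesian, and the ordinary condition is cartesian because the $G_{K_\fl}$-equivariant splitting $T \cong A \oplus A(1)$ coming from admissibility of $\fl$ passes compatibly to $T/\mm^i T$ and $T[\mm^i]$, and $H^1_{ord}$ is the image of the cohomology of the $A(1)$-summand in each case, which commutes with multiplication by $\pi^{\len(A)-i}$.

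For the second piece, the key input is that Hypothesis \ref{key-hypothesis}(i) propagates to $(T/\mm^i T)^{G_K} = 0$ for every $i \geq 1$. This is an induction on $i$ using the short exact sequence $0 \to \mm^{i-1} T/\mm^i T \to T/\mm^i T \to T/\mm^{i-1} T \to 0$, together with the $G_K$-equivariant isomorphism $\mm^{i-1} T/\mm^i T \cong T/\mm T$ given by multiplication by a fixed generator of $\mm^{i-1}$ (using freeness of $T$); iterating the same device also yields $T^{G_K} = 0$. Now split the multiplication-by-$\pi^i$ endomorphism of $T$ into the short exact sequences
\begin{equation*}
0 \to T[\mm^i] \to T \to \mm^i T \to 0, \qquad 0 \to \mm^i T \to T \to T/\mm^i T \to 0,
\end{equation*}
and take Galois cohomology. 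The vanishing $H^0(K, T/\mm^i T) = 0$ renders $H^1(K, \mm^i T) \hookrightarrow H^1(K, T)$ injective, so that the composite map $H^1(K, T) \to H^1(K, \mm^i T) \hookrightarrow H^1(K, T)$ is multiplication by $\pi^i$; combined with $T^{G_K} = 0$, this extracts from the first sequence the natural isomorphism $H^1(K, T[\mm^i]) \cong H^1(K, T)[\mm^i]$.

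Under the submodule convention of Remark \ref{Note}(1), the local Selmer conditions on $T[\mm^i]$ are defined by pullback from those on $T$, so this isomorphism restricts at once to $\Sel_{\mathcal{F}(\nn)}(K, T[\mm^i]) \cong \Sel_{\mathcal{F}(\nn)}(K, T)[\mm^i]$. The other isomorphism $\Sel_{\mathcal{F}(\nn)}(K, T/\mm^i T) \cong \Sel_{\mathcal{F}(\nn)}(K, T[\mm^i])$ follows from the cartesian property of $\mathcal{F}(\nn)$ established in the first piece, since $T/\mm^i T \xrightarrow{\pi^{\len(A)-i}} T[\mm^i]$ is a $G_K$-equivariant isomorphism of $A$-modules whose induced map on $H^1$ carries local Selmer submodules to local Selmer submodules at every place. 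The main obstacle is the check at the ordinary-condition primes in the first piece: one must unwind the definitions to confirm that the direct-summand structure used to define $H^1_{ord}$ is compatible with both the mod-$\mm^i$ reduction and multiplication by $\pi^{\len(A)-i}$. Once cartesian is in hand, the cohomological step is formal and patterns after \cite[Lemma 3.7.1]{mr}.
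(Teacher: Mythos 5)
Your proof is correct and follows essentially the same route as the paper's: the paper's own proof is two sentences long, citing Mazur--Rubin Lemma 3.5.4 (under the weakened hypothesis $(T/\mm T)^{G_K}=0$) for the cartesian property and then asserting the chain $H^1(K,T/\mm^i)\cong H^1(K,T[\mm^i])\cong H^1(K,T)[\mm^i]$. You have filled in exactly the details that reference elides — the place-by-place cartesian check (with the key point being compatibility of the $G_{K_\fl}$-splitting $T\cong A\oplus A(1)$ with the mod-$\mm^i$ and $\mm^i$-torsion subquotients at admissible $\fl$), the induction giving $(T/\mm^i T)^{G_K}=0$ and $T^{G_K}=0$, and the two short exact sequences that produce the cohomological isomorphism. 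This is the same argument, just unpacked.
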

\begin{proof}
The cartesian structure of 
$\cF(\nn)$ is proved, exactly as in \cite[Lemma 3.5.4]{mr}, assuming that $(T/\mm T)^{G_K}=0$. We can see that $H^1(K,T/\mm^i){\cong}H^1(K,T[\mm^i]){\cong}H^1(K,T)[\mm^i]$,
from which the Lemma follows. 
\end{proof}
We now collect some of the results of Mazur and Rubin's (\cite{mr}) and Howard's on the structure of the Selmer groups. Note that the proof of the results do not require the irreducibility or reducibility of $T/\mm T$, but a modified form of the Cassels-Tate pairing along with the self duality hypotheses on $T$ and $\mathcal{F}$ .

\begin{prop}\label{selmer-facts}
\begin{enumerate}
    \item {\cite[2.2.7]{hbes}}\label{structure}
For any $\nn\in{\mathcal{N}}$ there is a (non-canonical) decomposition 
\begin{equation*}
    \Sel_{\mathcal{F}(\nn)}{\cong}A^{e(\nn)}{\oplus}M_\nn{\oplus}M_\nn\mbox{ with }e(\nn) \in\{0,1\}.
\end{equation*}
\item {\cite[Prop 2.2.9]{hbes}}\label{length} For any $\nn \fl \in \mathcal{N}$ there are non-negative integers $a,b$ with $a+b= \len(A)$ in the diagram of inclusions
\begin{equation*}
\xymatrix{
  & \Sel_{\mathcal{F}^{\fl}(\nn)}  \\
   \Sel_{\mathcal{F}(\nn)} \ar[ur]^b   &&   \Sel_{\mathcal{F}(\nn \fl)} \ar[ul]_a \\
  & \Sel_{\mathcal{F}_{\fl}({\nn})} \ar[ul]^a \ar[ur]_b
}
\end{equation*}
where the labels on the arrows are the lengths of the respective quotients. Here all the four quotients are cyclic $A$-modules and 
\begin{equation*}
    a=\len(\loc_{\fl}(\Sel_{\mathcal{F}(\nn)})), b=\len(\loc_{\fl}(\Sel_{\mathcal{F}(\nn \fl)})).
\end{equation*}
\item \cite[Cor 2.2.13]{hbes}\label{dim}
Fix $\nn{\in}{\mathcal{N}}$. Let $e(\nn)$ be as in \ref{structure},
and set
\begin{equation*}
\rho(\nn)=\dim_{A/\mm}(\Sel_{\mathcal{F}(\nn)}(K,T/\mm T)).
\end{equation*}
Then $e(\nn)\equiv{\rho(\nn)}\pmod{2}$, and for any $\fl{\in}{\mathcal{L}}$ prime to $\nn$
\begin{equation*}
\begin{split}
    \rho(\nn\fl)&={\rho}(\nn)+1\iff \loc_\fl(\Sel_{\mathcal{F}(\nn)}(K,T/\mm T))=0\\
\rho(\nn\fl)&={\rho}(\nn)-1\iff \loc_\fl(\Sel_{\mathcal{F}(\nn)}(K,T/\mm T)){\neq}0
\end{split}
\end{equation*}
Here $\rho(\nn)$ remains unchanged, and the equivalences hold if one replaces $\Sel_{\mathcal{F}(\nn)}(K,T/\mm T)$ by $\Sel_{\mathcal{F}(\nn)}(K,T)[\mm]$ everywhere.
\end{enumerate}
\end{prop}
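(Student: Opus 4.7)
The plan is to prove the three parts of Proposition~\ref{selmer-facts} sequentially, using three principal tools: a modified Cassels--Tate style pairing on $\Sel_{\mathcal{F}(\nn)}$, Poitou--Tate global duality applied at the auxiliary prime $\fl$, and the cartesian hypothesis that lets us pass between $T$, $T[\mm]$, and $T/\mm T$ via Lemma~\ref{sel[m]}. Note first that $\mathcal{F}(\nn)$ inherits self-duality from $\mathcal{F}$, because by \eqref{decompo} both $H^1_{unr}(K_\fl,T)$ and $H^1_{ord}(K_\fl,T)$ are maximal isotropic in $H^1(K_\fl,T)$ under the local Tate pairing, so the modified local conditions at primes of $\nn$ are still self-dual.

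For (1), I would construct an $A$-valued alternating pairing on $\Sel_{\mathcal{F}(\nn)}$ via the Cassels--Tate construction adapted to Artinian coefficients as in \cite[\S1]{mr}. Self-duality of $\mathcal{F}(\nn)$ ensures that the pairing lands in $A$, and the hypothesis $(T/\mm T)^{G_K}=0$ kills the $H^0$ obstructions in the Poitou--Tate nine-term exact sequence, so that the pairing is non-degenerate on the ``short'' summands (those of the form $A/\mm^i$ with $i<\len(A)$). A structure-theoretic argument for finitely generated modules over the principal Artinian local ring $A$ equipped with such a pairing then forces the multiplicities of each short summand to be even, yielding the decomposition $A^{e(\nn)}\oplus M_\nn\oplus M_\nn$ with $e(\nn)\in\{0,1\}$.

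For (2), I would begin with the tautological short exact sequences
\begin{equation*}
0\to \Sel_{\mathcal{F}_\fl(\nn)}\to \Sel_{\mathcal{F}(\nn)}\xrightarrow{\loc_\fl}\loc_\fl(\Sel_{\mathcal{F}(\nn)})\to 0,
\end{equation*}
and analogously with $\Sel_{\mathcal{F}(\nn\fl)}\subset \Sel_{\mathcal{F}^\fl(\nn)}$ having image inside $H^1_{ord}(K_\fl,T)$; these identify $a$ and $b$ with the lengths of the two localization images. The cyclicity of these images, and the crucial equality $a+b=\len(A)$, then follow from global Poitou--Tate duality: under the self-dual hypothesis it identifies the annihilator of $\loc_\fl(\Sel_{\mathcal{F}(\nn)})$ inside $H^1_{ord}(K_\fl,T)\cong A$ with $\loc_\fl(\Sel_{\mathcal{F}(\nn\fl)})$, and perfectness of the local Tate pairing on $H^1_{unr}\oplus H^1_{ord}$ gives the length relation. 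The remaining two edges of the diamond are then forced by additivity of length in the four-term sequences.

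For (3), Lemma~\ref{sel[m]} gives $\Sel_{\mathcal{F}(\nn)}(K,T/\mm T)\cong \Sel_{\mathcal{F}(\nn)}(K,T)[\mm]$; substituting the decomposition from part~(1) and taking $\mm$-torsion yields
\begin{equation*}
\rho(\nn)=e(\nn)+2\dim_{A/\mm}(M_\nn/\mm M_\nn),
\end{equation*}
which establishes the parity statement. For the $\pm 1$ behavior under $\nn\mapsto\nn\fl$, I would apply part~(2) to the coefficient module $T/\mm T$ (legitimate by the cartesian property), where the diamond forces $\{a,b\}=\{0,1\}$; the two cases are then distinguished precisely by whether $\loc_\fl$ vanishes on $\Sel_{\mathcal{F}(\nn)}(K,T/\mm T)$. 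The main obstacle in the whole argument is part~(1): verifying that the Cassels--Tate construction produces a genuinely alternating pairing that is non-degenerate on the short summands, which is the step where the hypothesis $(T/\mm T)^{G_K}=0$ (in place of the stronger irreducibility used in \cite{hbes}) does its essential work through the Poitou--Tate sequence.
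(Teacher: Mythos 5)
Your sketch is correct and follows the same route as the paper, which does not reprove this proposition at all: the text preceding it simply cites Howard's \cite[2.2.7, 2.2.9, 2.2.13]{hbes} and observes that Howard's arguments use only self-duality and a modified Cassels--Tate pairing (not irreducibility of $T/\mm T$), so they carry over unchanged. Your recapitulation of the Flach/Cassels--Tate pairing argument for (1), the Poitou--Tate duality computation for (2), and the reduction to $T/\mm T$ for (3) is essentially Howard's proof, correctly transcribed.

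One small imprecision worth flagging: you locate the work of the hypothesis $(T/\mm T)^{G_K}=0$ in ``killing $H^0$ obstructions in the Poitou--Tate sequence'' so that the pairing is non-degenerate on the short summands. In fact, in this paper that hypothesis enters primarily through Lemma~\ref{sel[m]}, i.e.\ it guarantees the cartesian property of $\cF(\nn)$ and the isomorphisms $\Sel_{\cF(\nn)}(K,T/\mm^i)\cong\Sel_{\cF(\nn)}(K,T[\mm^i])\cong\Sel_{\cF(\nn)}(K,T)[\mm^i]$; these are what allow the reduction to $T/\mm T$ in part (3) and the passage between the pairing on $T$ and on its subquotients. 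The even-multiplicity conclusion in part (1) already follows from the alternating property of the pairing together with self-duality of $\cF(\nn)$, and the bound $e(\nn)\le 1$ is then a formal consequence of absorbing pairs of free summands into $M_\nn\oplus M_\nn$, as you note. So the logical dependence is slightly different from what you wrote, but the overall architecture of the argument is the right one.
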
 
\begin{definition}
Let $\cN^{odd}$ denote the subset of $\cN$ for which $e(\nn)=1$ and $\cN^{even} \subset \cN$ is the subset for which $e(\nn)=0$ where $e(\nn)$ is as defined in the first statement of the Proposition \ref{selmer-facts}\ref{structure}.
\end{definition}
By \ref{dim} above, 
    for $\nn\fl{\in}{\mathcal{N}}$,
\begin{equation}
\nn{\in}{\mathcal{N}^{even}}\iff \nn\fl{\in}{\mathcal{N}^{odd}}.
\end{equation}
\begin{prop}\label{length+-}\cite[Cor 2.2.12]{hbes}
Let $\nn\fl{\in}{\mathcal{N}}$, and $a$ and $b$ be as in Proposition \ref{selmer-facts}\ref{length}. Then
\begin{equation*}
\len(M_\nn)=
\begin{cases}
\len(M_{\nn\fl})+a &\mbox{ if }\nn\in{\mathcal{N}^{even}}\\ \len(M_{\nn\fl})-b &\mbox{ if }\nn\in{\mathcal{N}^{odd}}.
\end{cases}
\end{equation*}
\end{prop}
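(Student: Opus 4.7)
The plan is to compare the total lengths of $\Sel_{\cF(\nn)}$ and $\Sel_{\cF(\nn\fl)}$ using the diagram of cyclic quotients in Proposition \ref{selmer-facts}(ii), and then substitute the structural decomposition from Proposition \ref{selmer-facts}(i) to extract $\len(M_\nn)$ and $\len(M_{\nn\fl})$. The parity information from the discussion following Proposition \ref{selmer-facts}, namely $\nn\in\cN^{even}\iff \nn\fl\in\cN^{odd}$, will be the bridge that fixes the sign.

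First I would write down the two length identities
\begin{equation*}
\len(\Sel_{\cF(\nn)}) = e(\nn)\cdot\len(A) + 2\len(M_\nn), \qquad \len(\Sel_{\cF(\nn\fl)}) = e(\nn\fl)\cdot\len(A) + 2\len(M_{\nn\fl}),
\end{equation*}
coming straight from Proposition \ref{selmer-facts}(i). Next, from the diagram in Proposition \ref{selmer-facts}(ii), both $\Sel_{\cF(\nn)}$ and $\Sel_{\cF(\nn\fl)}$ sit inside $\Sel_{\cF^\fl(\nn)}$ as submodules of cyclic cokernel length $b$ and $a$ respectively, so
\begin{equation*}
\len(\Sel_{\cF(\nn)}) - \len(\Sel_{\cF(\nn\fl)}) = a - b.
\end{equation*}

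For the final step, combine the above with the facts $e(\nn)+e(\nn\fl)=1$ and $a+b=\len(A)$. If $\nn\in\cN^{even}$, then $e(\nn)=0$ and $e(\nn\fl)=1$, so $2\len(M_\nn) - \len(A) - 2\len(M_{\nn\fl}) = a-b$, which rearranges to $2(\len(M_\nn)-\len(M_{\nn\fl})) = (a-b)+(a+b)=2a$, giving $\len(M_\nn)=\len(M_{\nn\fl})+a$. Symmetrically, if $\nn\in\cN^{odd}$, then $e(\nn)=1$ and $e(\nn\fl)=0$, so $\len(A)+2\len(M_\nn)-2\len(M_{\nn\fl})=a-b$, which gives $2(\len(M_\nn)-\len(M_{\nn\fl}))=(a-b)-(a+b)=-2b$, hence $\len(M_\nn)=\len(M_{\nn\fl})-b$.

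There is no serious obstacle here: the proposition is a bookkeeping consequence of the two preceding results, so the only care required is keeping track of which side of the diagram contributes $a$ versus $b$ and using the parity flip $e(\nn)+e(\nn\fl)=1$ correctly. The nontrivial content has already been absorbed into Proposition \ref{selmer-facts}, whose proofs in \cite{mr, hbes} depend only on the self-duality of $\cF$ and the Cassels-Tate pairing and so go through unchanged under our weakened Hypothesis \ref{key-hypothesis}(i).
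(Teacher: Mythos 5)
Your proof is correct, and the bookkeeping is exactly the standard derivation (the paper itself gives no proof, deferring to Howard's Corollary 2.2.12, whose argument is precisely this combination of the structure decomposition, the diamond diagram, $e(\nn)+e(\nn\fl)=1$, and $a+b=\len(A)$). Your closing observation that the inputs rest only on self-duality and the Cassels--Tate pairing, not on irreducibility, is also exactly the point the paper makes before Proposition \ref{selmer-facts}.
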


\section{The Key Lemma}\label{section-key}
Let $\Gamma^-=\gal(K^{ac}/K)$. Then $\Gamma^-\cong\Z_p$, and we write $\gamma$ for a topological generator of $\Gamma^-$. 
Let $T_p$ be the Tate module of $E$, and $\rho_E:G_\Q{\lra}\operatorname{Aut}_{\Zp}(T_p)$ denote the representation attached to the elliptic curve $E$. We set $U={\Zp}^{\times}{\cap}\img(\rho_E)$. 
Let $R$ be the ring of integers of a finite extension of $\Q_p$, with maximal ideal $\mm$ and  $v_p$ is the normalized valuation on $R$. Consider a character $\alpha: \Gamma^-{\lra}{R^{\times}}$.  
Following \cite{nekovar, cgls}, we define 
\begin{eqnarray*}
 C_1&:=&\operatorname{min}\{v_p(u-1)\mid u{\in}U\}\\
 C_2&:=&\operatorname{min}\{m{\geq}0\mid p^m\End_{\Zp}(T_p
 ){\subset}\rho_E(\Zp[G_\Q])\}\\
 C_{\alpha}&:=& \begin{cases}
                           v_p(\alpha(\gamma)-1), &\mbox{ if } \alpha{\neq}1 \\ 
                           0, &\mbox{ otherwise}.
              \end{cases}
\end{eqnarray*}

\begin{remark}[{\cite[Remark 3.3.5]{cgls}}]
    As remarked by Castella \emph{et al.}, if $\rho_E$ is surjective, then clearly $C_1=0$, and if $E[p]$ is irreducible, then $C_2=0$. In particular, $C_1=C_2=0$ if $\rho_E$ is surjective.
\end{remark}

We write $\pi$ for a uniformizer of $\mm$. For $k \geq1$, let $R^{(k)}=R/\mm^kR$, $T^{(k)}=T_p/\mm^kT_p$ and suppose $\ell$ be a rational prime. For each $\fl |\ell{\in}{\mathcal{L}}$, let $I_\ell$ be the smallest ideal containing $(\ell+1)$ for which the Frobenius element $\frob_\fl{\in}G_{K_\fl}$ acts trivially on $T_p/{I_\fl}T_p$, $\mathcal{L}_k$=$\{\fl{\in}{\mathcal{L}} \mid I_\ell{\subset}p^k\Zp\}$ and let $\mathcal{N}_k$ be the set of square-free products of primes in $\mathcal{L}_k$. 

For brevity, we write 
\begin{enumerate}
\item $r:=\operatorname{rank}_{\Z_p}(R)$.
\item $\varepsilon_0:=r(C_1+C_2+C_\alpha)$.
\item $A:=R^{(k)}$, which is a principal local ring with maximal ideal denoted by $\mm$ again,
\item $T_\alpha:=T^{(k)}\otimes A(\alpha)$, which is the $G_K$-module $T^{(k)}$ twisted by a character $\alpha$.
\item $\cL:=\cL_k$.
\end{enumerate}

\begin{definition}\label{ord-exp-ind}
Let $M$ be a finitely generated $A$-module. Then
\begin{enumerate}
\item the \emph{order of} $x\in M$ is denoted by
\begin{equation*}
    \ord(x):= \min\{m \geq 0: {\pi}^m.x=0\},
\end{equation*}
\item the \emph{exponent of} $x\in M$ is denoted by 
\begin{equation*}
    \exp(M):=\min\{n\geq 0:{\pi}^nM=0\}=\max\{\ord(x):x\in M\},
\end{equation*}
\item 
the \emph{index of divisibility} of $x$ in $M$ is denoted by
\begin{equation*}
    \ind(x,M):=\operatorname{max}\{j \leq \infty | x \in {\mm}^jM\}.
\end{equation*}
\end{enumerate}
\end{definition}

Suppose $s:C\lra D$ be a surjective map of two
finitely generated $A$-modules, then for $x\in C$ 
\begin{equation}\label{surj-ind}
    \ind(x,C)\leq \ind(s(x),D).
\end{equation}
We continue to assume that Hypotheses \ref{key-hypothesis}  holds. 
\begin{theorem}[{\cite[Lemma 1.6.2]{how2}},{\cite[Theorem 3.2]{bd}}]\label{HBD}
Suppose $T^{(k)}/\mm T^{(k)}$ is irreducible as a representation of $G_K$, and $c \in H^1(K,T^{(k)}/{\mm}T^{(k)})$ is non-zero. Then there are infinitely many primes $\fl \in \cL$ such that $\loc_\fl(c) \neq 0$.
\end{theorem}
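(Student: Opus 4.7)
Write $\bar T := T^{(k)}/\mm T^{(k)}$, regarded as an irreducible $(A/\mm)[G_K]$-module. My plan is a standard Chebotarev density argument. First, let $L$ be the fixed field in $\ol K$ of the joint kernel of the action of $G_K$ on $\bar T$ and on $\mu_p$, so that $G_L$ acts trivially on both. From inflation--restriction
\begin{equation*}
0 \lra H^1(\gal(L/K),\bar T) \lra H^1(K,\bar T) \stackrel{\operatorname{res}}{\lra} H^1(G_L,\bar T)^{\gal(L/K)},
\end{equation*}
I would verify that the inflation term vanishes: this is a classical consequence of absolute irreducibility of $\bar T$ together with $p>2$, via a Sah-type argument using that some element of $\gal(L/K)$ (e.g.\ a nontrivial scalar from the cyclotomic character, or $-1$) acts on $\bar T$ as a nontrivial scalar. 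Consequently $\bar c := \operatorname{res}(c)$ is a nonzero $\gal(L/K)$-equivariant homomorphism $G_L \to \bar T$.

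Next, let $M$ be the fixed field of $\ker(\bar c)$, so that $\gal(M/L) \inj \bar T$ is a nonzero $\gal(L/K)$-submodule; by irreducibility this inclusion is an equality, yielding a short exact sequence
\begin{equation*}
1 \lra \bar T \lra \gal(M/K) \lra \gal(L/K) \lra 1.
\end{equation*}
I would then exhibit an element $\sigma_L \in \gal(L/K)$ acting on $\bar T$ with eigenvalues $1$ and some $u \in (A/\mm)^\times$ with $u\neq 1$, and with $u$ realizable as the reduction of $N\fl$ for a suitable inert rational prime. Concretely, taking $\sigma_L$ to be the Frobenius class at an inert prime $\ell$ with $a_\ell \equiv \pm(\ell+1)\pmod{p^k}$ meets the admissibility conditions defining $\cL$.

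Now fix a lift $\sigma_0 \in \gal(M/K)$ of $\sigma_L$; any other lift is of the form $\sigma_0\tau$ for some $\tau \in \bar T \cong \gal(M/L)$. Decomposing $\bar T = \bar T^+\oplus \bar T^-$ into the $1$- and $u$-eigenspaces of $\sigma_L$, for a prime $\fl$ of $K$ unramified in $M/K$ whose Frobenius is conjugate to $\sigma_0\tau$, the unramified component of $\loc_\fl(c)\in H^1(K_\fl,\bar T)$ lies in $\bar T/(\sigma_L-1)\bar T \cong \bar T^+$ and is represented by the cocycle value $c(\sigma_0\tau) = c(\sigma_0) + \sigma_0\cdot\tau$. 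Since $\sigma_0\cdot\tau$ sweeps out all of $\bar T$ as $\tau$ varies, I can choose $\tau$ so that this projection is nonzero; the Chebotarev density theorem applied to $M/K$ then produces infinitely many primes $\fl\in\cL$ with $\loc_\fl(c)\neq 0$.

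The main obstacle is coordinating the last two inputs: simultaneously arranging the admissibility condition on the image in $\gal(L/K)$ together with nonvanishing of the localization on the image in $\gal(M/L)\cong\bar T$. Irreducibility of $\bar T$ is used crucially here, since it forces $\gal(M/L)$ to fill all of $\bar T$, so the $\bar T$-torsor of lifts above any admissible $\sigma_L$ contains elements detecting $c$ nontrivially. This is exactly the mechanism that breaks down under the weaker hypothesis $\bar T^{G_K}=0$, and motivates the twist $T_\alpha$ introduced later in the manuscript.
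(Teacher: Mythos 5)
The paper does not prove this theorem; it is cited from Howard's \cite[Lemma 1.6.2]{how2} and Bertolini--Darmon's \cite[Theorem 3.2]{bd}, so there is no in-paper argument to compare against. Your sketch correctly reconstructs the Chebotarev density mechanism those sources use: pass to the field $L$ trivializing the relevant Galois action, use an inflation--restriction/Sah vanishing to see that $c$ survives restriction to $G_L$ as a $\gal(L/K)$-equivariant homomorphism, take the field $M$ cut out by that homomorphism, and then apply Chebotarev to $M/K$ while varying the lift of an admissible element of $\gal(L/K)$ over the fiber $\gal(M/L)\cong\bar T$. This is the right skeleton and the right appeal to irreducibility.

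Two places want more care before this is a complete proof. First, your $L$ is the field fixed by the kernel of the action on $\bar T=T^{(k)}/\mm T^{(k)}$ and $\mu_p$, but the admissible set $\cL=\cL_k$ imposes Frobenius conditions at level $p^k$ (on $T^{(k)}$ and $\mu_{p^k}$), not merely at the residual level. The Chebotarev argument therefore has to be run over $K(T^{(k)},\mu_{p^k},\ker\bar c)$, and one must check that the lift $\sigma_0$ can be chosen to satisfy the $p^k$-admissibility on the $K(T^{(k)},\mu_{p^k})$-component simultaneously with the nonvanishing constraint on the $\gal(M/L)\cong\bar T$-component. This is the standard independence-of-subextensions step: all the intermediate layers over $L$ are $p$-power abelian extensions, and the required compatibility follows from the same irreducibility/Sah input, but it does need to be stated. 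Second, the Sah-type vanishing $H^1(\gal(L/K),\bar T)=0$ requires a central element of $\gal(L/K)$ acting on $\bar T$ by a scalar $\neq 1$; irreducibility as a $G_K$-module does not by itself supply this, although for residual representations of elliptic curves with $p\geq 5$ one does have such an element (e.g.\ $-I$ when the image is large, or via oddness). Both points are routine and handled in the cited references, so your proposal is an essentially faithful, if slightly telescoped, reconstruction of the standard argument.
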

\begin{remark}
    It is to be noted that using the above Theorem, we do not need to twist $T^{(k)}$ by a non-trivial character $\alpha$ in the irreducible case. In other words, we can take $\alpha=1$ in this case, and $C_\alpha =0$.
\end{remark}
Building on results of Nekov\'a\v{r} \cite[Lemma 6.6.1(iii), Prop 6.1.2, Cor 6.3.4]{nekovar} the following result is proved by Castella \emph{et al}. 
\begin{theorem}[{\cite[Prop. 3.3.6]{cgls}}]\label{order} Suppose $\alpha\neq 1$ and $c_1,c_2 \in H^1(K,T_{\alpha})$ are cocyles such that 
$Ac_1+Ac_2$ contains a submodule isomorphic to ${\mm}^{d_1}A \oplus {\mm}^{d_2}A$ for some $d_1,d_2 \geq 0$.  Then for any $c_3\in H^1(K,T_\alpha)$ there exist infinitely many $\fl{\in}{\mathcal{L}_k}$ such that 
$\ord(\loc_\fl(c_3)){\geq}\ord(c_3)-\varepsilon_0$.
  
\end{theorem}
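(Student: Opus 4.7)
The plan is to reduce the statement to a Chebotarev-type density argument on a Galois extension $L/K$ trivializing the action on $T_\alpha$, where the constants $C_1$, $C_2$, and $C_\alpha$ quantify the precise obstruction to a clean Chebotarev argument that arises from the possible reducibility of $T/\mm T$.

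First I would let $L$ be the smallest Galois extension of $K$ over which $G_K$ acts trivially on $T_\alpha$, and restrict $c_1, c_2, c_3$ to $G_L$. By Hypothesis \ref{key-hypothesis} and inflation--restriction, the order of $c_3|_{G_L}$ agrees with $\ord(c_3)$ up to controlled loss. The restricted classes become $\gal(L/K)$-equivariant homomorphisms $G_L^{ab}\to T_\alpha$, and for any admissible $\fl\in\mathcal{L}_k$ the localization $\loc_\fl(c_3)$ is computed by evaluating the corresponding homomorphism at a Frobenius lift. The task then reduces to finding, for infinitely many admissible $\fl$, a Frobenius element whose image under $c_3$ has order at least $\ord(c_3)-\varepsilon_0$.

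Next I would apply Nekov\'a\v{r}'s machinery from \cite[Lemma 6.6.1(iii), Prop. 6.1.2, Cor. 6.3.4]{nekovar}. When $T/\mm T$ is irreducible, Theorem \ref{HBD} suffices and no twist is needed; in the reducible setting one must instead exploit two classes simultaneously. The hypothesis that $Ac_1+Ac_2$ contains a submodule of type $\mm^{d_1}A\oplus\mm^{d_2}A$ says precisely that the pair $(c_1,c_2)$ detects both eigen-directions of $T_\alpha$ to controlled depth, so that Nekov\'a\v{r}'s results produce a Frobenius conjugacy class in $\gal(L/K)$ meeting the required condition. The three error constants enter as follows: $C_2$ is the $p^{C_2}$-defect of $\rho_E(\Z_p[G_\Q])$ inside $\End_{\Z_p}(T_p)$; $C_1$ measures how $p$-adically close non-trivial elements of $U=\Z_p^\times\cap\img(\rho_E)$ can be to $1$; and $C_\alpha=v_p(\alpha(\gamma)-1)$ encodes the cost of the $\alpha$-twist. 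Multiplying the $\Z_p$-valuation estimate by $r=\operatorname{rank}_{\Z_p}R$, to convert to $\pi$-adic valuations on $A$, yields the total loss $\varepsilon_0=r(C_1+C_2+C_\alpha)$.

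The main obstacle is enforcing all the constraints on $\frob_\fl$ simultaneously: it must lie in the desired conjugacy class of $\gal(L/K)$, satisfy the admissibility conditions defining $\mathcal{L}_k$ (inertness in $K$, Frobenius eigenvalues $N(\fl)$ and $1$ on $T_p$, and $N(\fl)+1\in p^k\Z_p$), and deliver the prescribed order when paired against $c_3$. The assumption $\alpha\neq 1$ is essential here: not only does it force $C_\alpha$ to be finite, it also supplies the extra twisting parameter which separates the two eigen-directions of $T_\alpha$, so that the intersection of all these conditions defines a non-empty union of conjugacy classes of positive Chebotarev density. A final application of the Chebotarev density theorem to the appropriate finite Galois quotient then produces infinitely many such $\fl\in\mathcal{L}_k$, completing the proof.
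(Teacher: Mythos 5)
This statement is not proved in the paper at all: the paper simply cites it as \cite[Prop.~3.3.6]{cgls}, which in turn rests on Nekov\'a\v{r}'s \cite[Lemma~6.6.1(iii), Prop.~6.1.2, Cor.~6.3.4]{nekovar}. Your proposal correctly identifies those same references and gives a plausible narrative of the argument --- trivialize the $G_K$-action over a finite Galois extension $L/K$, restrict the classes, evaluate at Frobenius, and run a Chebotarev density argument with a quantified loss --- so at the level of strategy you are pointing at the right machinery.

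However, as a \emph{proof} the proposal has genuine gaps, and they concentrate exactly at the points that make the statement nontrivial. First, the assertion that ``the order of $c_3|_{G_L}$ agrees with $\ord(c_3)$ up to controlled loss'' is precisely what the constants $C_1$, $C_2$, $C_\alpha$ are supposed to quantify, via bounds on $H^1(\gal(L/K),T_\alpha)$; you name the constants but never show why the kernel of restriction has exponent bounded by (something converting to) $\varepsilon_0$. Second, the conversion step --- ``multiplying the $\Z_p$-valuation estimate by $r=\operatorname{rank}_{\Z_p}R$'' --- is stated without justification and is not obviously the right factor: $C_1$ and $C_2$ involve valuations of elements of $\Z_p$, $C_\alpha$ a valuation in $R$, and lengths in $A=R/\mm^k$ scale with the ramification index $e$, not $r=ef$; this normalization needs to be pinned down, not asserted. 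Third, and most seriously, the role of the hypothesis on $c_1,c_2$ is left entirely gestural (``detects both eigen-directions to controlled depth''). In Nekov\'a\v{r}'s set-up that hypothesis is what guarantees the field cut out by the Kummer images of $c_1,c_2$ is large enough that the Chebotarev conditions (membership in $\mathcal{L}_k$, the prescribed conjugacy class for $\frob_\fl$, and the order bound for $\loc_\fl(c_3)$) are simultaneously satisfiable; without making this precise you have not shown that the relevant union of conjugacy classes is nonempty. In short, the proposal is a reasonable summary of where the proof lives, but it asserts, rather than establishes, the inflation--restriction bound, the valuation bookkeeping, and the use of $(c_1,c_2)$ to make the density argument go through.
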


\begin{prop}\label{cocycle}

 Let $k>\varepsilon_0$ and $\alpha\neq1$. 
Then for any cyclic free $A$-submodule $C=Ac$ of rank one contained in $\Sel_{\cF(\nn)}(K,T_\alpha)$, there exists infinitely many primes $\fl\in\mathcal{L}$  such that $\loc_\fl(c)\neq0$.
\end{prop}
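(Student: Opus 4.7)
The plan is to apply Theorem~\ref{order} directly to the generator $c$, taking $c_3 := c$, and to exploit the assumption $k > \varepsilon_0$ to conclude non-vanishing of the localizations. Since $C = Ac$ is a \emph{free} $A$-module of rank one, the generator $c$ has maximal order: $\ord(c) = \len(A) = k$.

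To invoke Theorem~\ref{order}, I must first supply cocycles $c_1, c_2 \in H^1(K, T_\alpha)$ such that $Ac_1 + Ac_2$ contains a submodule isomorphic to $\mm^{d_1} A \oplus \mm^{d_2} A$ for some $d_1, d_2 \geq 0$. A convenient choice is $c_1 := c$ and $c_2 := 0$, with $d_1 := 0$ and $d_2 := k$: noting that $\mm^k A = 0$, the required submodule $\mm^0 A \oplus \mm^k A \cong A$ is contained in $Ac \subseteq Ac_1 + Ac_2$, since $c$ is a free generator. This dispenses with the hypothesis of the theorem.

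With this input, Theorem~\ref{order} produces infinitely many primes $\fl \in \mathcal{L}$ satisfying
\[
\ord(\loc_\fl(c)) \;\geq\; \ord(c) - \varepsilon_0 \;=\; k - \varepsilon_0 \;\geq\; 1,
\]
where the last inequality is precisely the assumption $k > \varepsilon_0$. Therefore $\loc_\fl(c) \neq 0$ for each such $\fl$, proving the proposition.

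I anticipate no serious obstacle in this argument: all of the Galois-cohomological heavy lifting is packaged inside Theorem~\ref{order}, which encodes the work of Nekov\'a\v{r} and Castella \emph{et al.}\ that substitutes for a Chebotarev-type density statement in the reducible setting. The sole purpose of the assumption $k > \varepsilon_0$ is to absorb the additive error term $\varepsilon_0 = r(C_1 + C_2 + C_\alpha)$, which collapses to $0$ in the irreducible case treated by Theorem~\ref{HBD} and for which the analogous statement holds without any twist by $\alpha$.
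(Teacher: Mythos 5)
Your argument applies Theorem~\ref{order} with the choice $c_1 := c$, $c_2 := 0$, $d_1 := 0$, $d_2 := k$, using $\mm^k A = 0$ to reduce the required submodule $\mm^{d_1}A \oplus \mm^{d_2}A$ to $A \oplus 0 \cong A$. This is where the proof breaks. That choice makes the hypothesis of Theorem~\ref{order} completely vacuous: for any nonzero class $c_1$, the module $Ac_1 + A\cdot 0 = Ac_1$ trivially contains something isomorphic to $\mm^{d_1}A \oplus 0$. If Theorem~\ref{order} really tolerated $d_2 = \len(A)$, it would assert that \emph{every} nonzero $c_3 \in H^1(K,T_\alpha)$ has $\ord(\loc_\fl(c_3)) \geq \ord(c_3) - \varepsilon_0$ for infinitely many $\fl$, with no genuine input at all -- a Chebotarev statement for free. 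That is exactly what fails in the reducible situation and what the entire $\varepsilon_0$-machinery of Nekov\'a\v{r} and Castella \emph{et al.}\ is designed to work around. The hypothesis in Theorem~\ref{order} is only meaningful when both summands $\mm^{d_1}A$ and $\mm^{d_2}A$ are nonzero (i.e.\ $d_1, d_2 < \len(A)$): one needs two genuinely independent cohomology classes spanning a ``rank-two'' piece of $H^1(K,T_\alpha)$ to control the Galois image and obtain density of good Frobenii.

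The paper's own proof is precisely the work of exhibiting such a pair $c_1, c_2$, and it is not a one-liner. It splits into cases according to the decomposition $\Sel_{\cF(\nn)}(K,T) \cong A^{e(\nn)} \oplus M_\nn \oplus M_\nn$: when $M_\nn \neq 0$ one picks a nonzero $x$ and uses the two copies of $M_\nn$ to manufacture two classes whose span contains $\mm^t A \oplus \mm^t A$; when $M_\nn = 0$ and $e(\nn) = 1$, one first shows (via the lengths in Proposition~\ref{selmer-facts}(ii) and the relation $a + b = \len(A)$) that either $\Sel_{\cF(\nn)}(K,T) \subsetneq H^1(K,T)$, in which case a second class $c_2$ outside the Selmer group supplies the rank-two input, or $\Sel_{\cF(\nn)}(K,T) = H^1(K,T) \cong A$ and a separate argument shows the localizations are nonzero. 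To repair your proof you would need to reproduce this case analysis; the shortcut of sending $c_2$ to $0$ is not available.
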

\begin{proof}
For brevity, we write $T$ for $T_\alpha$. 
Since
$\Sel_{\mathcal{F}(\nn)}(K,T){\cong}A^{e(\nn)}\oplus M_{\nn}{\oplus}M_{\nn}$, and 
it contains a non-zero submodule $C$, we have $e_\nn\neq0$ or $M_\nn\neq0$. It is enough to
prove the statement in the following cases.

 \emph{Case I:} Let $M_\nn\neq0$. Then  $\Sel_{\mathcal{F}(\nn)}(K,T){\supseteq}M_{\nn}{\oplus}M_{\nn}\neq0$. Let $x\in M_\nn$ be a non-zero cohomology class.  
 Let $x$ be not torsion. Then  
 $Ax$ is a free $A$-submodule, and $Ax\oplus Ax$ contains a 
 submodule isomorphic to $A\oplus A$.
 
 Let $x$ be $A$-torsion, and let $t=\ord(x)$. Then $t<k$, $\pi^tx=0$, and the natural map
 $A\lra Ax$ has kernel generated by $\pi^t$, and the
 natural surjective map $A\surj \pi^{k-t}A$ factors through $\pi^tA$. Therefore, $\pi^{k-t}A\cong A/\pi^{t}A\cong Ax$.
 
 Taking $c_1$ to be equal to $x$ coming from $M_\nn$ and $c_2$ from
 the other summand $M_\nn$, we see that the submodule $xA\oplus xA$ has a submodule isomorphic to $\mm^tA\oplus\mm^tA$.
Then we get two cohomology classes $c_1,c_2 \in \Sel_{\mathcal{F}}(K,T)$ such that 
$c_1A\oplus c_2A\supset \mm^tA\oplus\mm^tA$. 
Now by Theorem \ref{order} we have 
\begin{equation*}
    \ord(\loc_\fl(c)){\geq}\ord(c)-r(C_1+C_2+C_{\alpha}) \geq k-\varepsilon_0>0
\end{equation*}
Therefore, by Theorem \ref{order}, we have infinitely many $\fl$ such that $\loc_\fl(c) \neq 0$.

 \emph{Case II:} 
Let $e_\nn=1, M_\nn=0$. Then $\Sel_{\mathcal{F}(\nn)}(K,T)=Ax\cong A$, for some generator $x$.
We first show that  either $\Sel_{\mathcal F(\nn)}(K,T)\neq H^1(K,T)$ or $\loc_\fl(\Sel_{\mathcal{F}(\nn)}(K,T))\neq0$
for all $\fl\in\mathcal L\backslash \{\mbox{ primes dividing }\nn\}$. 

Towards a contradiction, suppose that, both the equalities hold simultaneously, i.e.,  
\begin{equation*}\begin{cases}
\Sel_{\mathcal F(\nn)}(K,T)= H^1(K,T), \mbox{ and }\\ \loc_\fl(\Sel_{\mathcal{F}(\nn)}(K,T))=0 \mbox{ for some } \fl\in\mathcal L\backslash \{\mbox{ primes dividing }\nn\}. 
\end{cases}
\end{equation*}
Here $\nn\fl\in\mathcal N$.    
By definition, 
\begin{equation*}
\Sel_{\mathcal{F}_\fl(\nn)}(K,T)=\Sel_{\mathcal{F}(\nn)}(K,T)\cap\Sel_{\mathcal{F}(\nn\fl)}(K,T)=H^1(K,T)\cap\Sel_{\mathcal{F}(\nn\fl)}(K,T)=\Sel_{\mathcal{F}(\nn\fl)}(K,T).    
\end{equation*}
 Then \begin{equation*}
\len(\loc_\fl(\Sel_{\mathcal{F}(\nn)}(K,T)))=0 \mbox{ and } \len(\loc_\fl(\Sel_{\mathcal{F}(\nn\fl)}(K,T)))=0.
\end{equation*}
This is a contradiction as these lengths add up to $\len(A)=k$ by \cite[Prop 2.2.9]{hbes}.
This proves our claim as $\Sel_{\mathcal{F}(\nn)}(K,T)\neq0$.

In case, $\Sel_{\mathcal F(\nn)}(K,T)\neq H^1(K,T)$, let $c_2\in H^1(K,T)$ such that it is not
zero in the quotient $H^1(K,T)/\Sel_{\mathcal F(\nn)}(K,T)$. Then the submodule generated
by $x$ and $c_2$ contains a submodule isomorphic to $A\oplus \mm^b$ for some $a,b\geq0$. Here again, for the free submodule $Ac=C$, by Theorem \ref{order}  we get infinitely many $\fl$ such that $\loc_\fl(c)\neq 0$.

In the other case, $\Sel_{\cF(\nn)}(K,T)=H^1(K,T)\cong A$, so $Ac=H^1(K,T)$, it is clear that we have infinitely many $\fl$ such that $\loc_\fl(c)\neq 0$.  
This completes the proof.
\end{proof}

We now have the following key lemma.
  \begin{lemma}\label{key-lemma} Let $\len(A)>\varepsilon_0$. Then for any $\nn\in \mathcal{N}$ and any cyclic free rank one $A$-submodule $C\subset\Sel_{\mathcal{F}(\nn)}(K,T)$, there exists infinitely many $\fl \in \mathcal{L}$ such that $\loc_\fl(C)\cong H^1_{unr}(K_\fl,T)$.
  \end{lemma}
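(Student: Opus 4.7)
The plan is to derive the lemma as an immediate corollary of Proposition \ref{cocycle}. Given the cyclic free rank-one submodule $C = Ac \subset \Sel_{\cF(\nn)}(K, T)$, the generator $c$ satisfies $\ord(c) = \len(A)$. First, I would apply Proposition \ref{cocycle} to obtain infinitely many $\fl \in \mathcal{L}$ (chosen coprime to $\nn$) with $\loc_\fl(c) \neq 0$.

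Next, I would identify where the image lives. Any such $\fl$ lies outside $\Sigma_{\cF(\nn)}$, so the local condition is the unramified one: $H^1_{\cF(\nn)}(K_\fl, T) = H^1_{unr}(K_\fl, T)$, and since $c \in \Sel_{\cF(\nn)}$ the class $\loc_\fl(c)$ lies in $H^1_{unr}(K_\fl, T)$. By the decomposition \eqref{decompo} this target is a free $A$-module of rank one, so $\loc_\fl(C) = A \cdot \loc_\fl(c)$ is a cyclic $A$-submodule of it. To upgrade non-vanishing to the stated isomorphism, I would re-examine the appeal to Theorem \ref{order} inside the proof of Proposition \ref{cocycle}: taking $c_3 = c$ there yields the sharper order bound $\ord(\loc_\fl(c)) \geq \ord(c) - \varepsilon_0 = \len(A) - \varepsilon_0$, and under the hypothesis $\len(A) > \varepsilon_0$ this exhibits $\loc_\fl(C)$ as a cyclic submodule of length at least $\len(A) - \varepsilon_0$ inside the length-$\len(A)$ free module $H^1_{unr}(K_\fl, T)$; matching lengths of cyclic $A$-modules then gives the desired isomorphism.

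The main obstacle is precisely this $\varepsilon_0$ loss inherited from Nekov\'a\v{r}'s result: in Howard's irreducible setup $\varepsilon_0 = 0$ and the conclusion becomes the literal equality $\loc_\fl(C) = H^1_{unr}(K_\fl, T)$, whereas here $\loc_\fl(C)$ can sit inside $H^1_{unr}(K_\fl, T)$ with index controlled by $\pi^{\varepsilon_0}$. It is this gap that is responsible for the error term appearing in the subsequent Selmer bounds (Theorem \ref{bound}) and that forces one to work with absolute core vertices rather than Howard's core vertices.
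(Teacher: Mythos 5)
Your proposal follows the paper's own strategy: invoke Proposition \ref{cocycle} (and underneath it Theorem \ref{order}) to produce infinitely many $\fl$ with $\loc_\fl(c) \neq 0$, and then try to upgrade non-vanishing to the asserted isomorphism $\loc_\fl(C)\cong H^1_{unr}(K_\fl,T)$. The upgrade step does not go through, and you flag this yourself in your closing paragraph, so let me just make the issue precise. Theorem \ref{order} only yields $\ord(\loc_\fl(c)) \geq \ord(c) - \varepsilon_0 = \len(A) - \varepsilon_0$, not $\ord(\loc_\fl(c)) = \len(A)$. Thus $\loc_\fl(C) = A\cdot\loc_\fl(c)$ is a nonzero cyclic submodule of $H^1_{unr}(K_\fl,T)\cong A$, but its length may be anywhere between $\len(A)-\varepsilon_0$ and $\len(A)$; nothing rules out $\loc_\fl(c) \in \mm H^1_{unr}(K_\fl,T)$ for every $\fl$ produced by the argument, in which case $\loc_\fl|_C$ is never injective and the claimed isomorphism fails. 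The sentence ``matching lengths of cyclic $A$-modules then gives the desired isomorphism'' is exactly the false step, and you are right to retract it in the subsequent paragraph.

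You should also be aware that this is not an artifact of your reconstruction: the paper's own proof of Lemma \ref{key-lemma} records the inequality $\ord(\loc_\fl(c))\geq k-\varepsilon_0>0$ and then asserts ``Therefore, $\loc_\fl$ takes $C$ injectively into $H^1_{unr}(K_\fl,T)$,'' but positivity of the order is strictly weaker than the full order $k$, and injectivity of $\loc_\fl$ on a free rank-one $C$ requires the latter. Since the isomorphism form of the lemma (not merely non-vanishing) is invoked repeatedly downstream — in Corollary \ref{key-corollary}, Proposition \ref{even type}, Proposition \ref{annihilate}, Lemma \ref{core}, and Theorem \ref{bound} — the difficulty is not local. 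A genuine repair would seem to require either a sharpened version of Theorem \ref{order} that pins down $\ord(\loc_\fl(c))$ exactly for well-chosen $\fl$, or a reworking of the definitions and arguments downstream so that they function with the weaker conclusion $\loc_\fl(C)\supseteq \mm^{\varepsilon_0}H^1_{unr}(K_\fl,T)$, absorbing the extra $\pi^{\varepsilon_0}$ factor into the error terms.
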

 \begin{proof} Let $C$ be generated by $c$, i.e., $C=Ac \cong A$.
 Suppose $k=\len(A)$, then $\ord(c)=k$.  
 By the previous Proposition, there exists infinitely many $\fl\in\cL$ such that
 \begin{equation*}
     \ord(\loc_{\fl}(c)) \geq \ord(c)-\varepsilon_0 \geq k-\varepsilon_0 > 0.
 \end{equation*}
 Therefore, $\loc_{\fl}$ takes $C$ injectively into $H^1_{unr}(K,T)$, which by equation \eqref{decompo} is isomorphic to $A$. Hence $\loc_\fl(C) \cong H^1_{unr}(K,T)$. 
 \end{proof}
 \begin{corollary}\label{key-corollary}
 Let $\len(A)>\varepsilon_0$. Then for any $\nn\in \mathcal{N}$ and any free $A$-submodule $C\subset\Sel_{\mathcal{F}(\nn)}(K,T)$ of rank one, there exists infinitely many $\fl \in \mathcal{L}$ such that $C/\mm\cong H^1_{unr}(K,T/\mm T)$. In particular,
 $\loc_\fl(\Sel_{\cF(\nn)}(K,T/\mm T))\neq0$, for infinitely many primes $\fl\in\cL$.
 \end{corollary}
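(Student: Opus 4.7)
The plan is to derive Corollary \ref{key-corollary} directly from Lemma \ref{key-lemma} by reducing modulo $\mm$ and invoking the cartesian property of the unramified local condition. First, I would apply Lemma \ref{key-lemma} to the free rank-one $A$-submodule $C = Ac \subset \Sel_{\cF(\nn)}(K,T)$: this yields infinitely many primes $\fl \in \cL$ for which $\loc_\fl$ restricts to an isomorphism $C \longby{\cong} H^1_{unr}(K_\fl, T)$ of free rank-one $A$-modules.

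Next, I would reduce this isomorphism modulo $\mm$. Tensoring with $A/\mm$ over $A$ gives an isomorphism $C/\mm C \cong H^1_{unr}(K_\fl, T)/\mm H^1_{unr}(K_\fl, T)$ of one-dimensional $A/\mm$-vector spaces. The unramified local condition at $\fl$ (where $T$ is unramified and $\ell \neq p$) is cartesian in the sense of Definition \ref{cartesian}: the natural map $T/\mm T \ra T[\mm]$ induces an isomorphism $H^1_{unr}(K_\fl, T/\mm T) \cong H^1_{unr}(K_\fl, T)[\mm]$, which for the free rank-one $A$-module $H^1_{unr}(K_\fl, T)$ coincides, via multiplication by $\pi^{\len(A)-1}$, with $H^1_{unr}(K_\fl, T)/\mm H^1_{unr}(K_\fl, T)$. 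Composing identifies $\loc_\fl(C/\mm C)$ with $H^1_{unr}(K_\fl, T/\mm T)$, which establishes the first assertion of the corollary.

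For the ``in particular'' clause, I would invoke Lemma \ref{sel[m]} to identify $\Sel_{\cF(\nn)}(K, T/\mm T)$ with $\Sel_{\cF(\nn)}(K, T)[\mm]$. Under this identification, the nonzero submodule $C[\mm] = \mm^{\len(A)-1} C \subset \Sel_{\cF(\nn)}(K, T)[\mm]$ corresponds to a nonzero subspace of $\Sel_{\cF(\nn)}(K, T/\mm T)$ whose image under $\loc_\fl$ is the nonzero $\mm$-torsion line $H^1_{unr}(K_\fl, T)[\mm]$. Consequently $\loc_\fl(\Sel_{\cF(\nn)}(K, T/\mm T)) \neq 0$ for the same infinite family of primes $\fl \in \cL$ produced by Lemma \ref{key-lemma}.

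The argument is essentially formal once Lemma \ref{key-lemma} is available; the only nontrivial input is the cartesian compatibility between reduction modulo $\mm$ and the unramified local condition, which holds under Hypothesis \ref{key-hypothesis}(i) and was already used in Lemma \ref{sel[m]}. Since everything is deduced from the isomorphism already supplied by Lemma \ref{key-lemma}, no further Chebotarev-type or density input is required — the bound $\len(A) > \varepsilon_0$ enters only through the application of the key lemma.
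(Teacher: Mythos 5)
Your proposal is correct and follows essentially the same route as the paper's proof: both apply Lemma \ref{key-lemma} to get an isomorphism $\loc_\fl\colon C\to H^1_{unr}(K_\fl,T)$, reduce modulo $\mm$ using the cartesian compatibility of the unramified local condition, and then invoke Lemma \ref{sel[m]} to identify $\Sel_{\cF(\nn)}(K,T/\mm T)$ with the appropriate reduction of $\Sel_{\cF(\nn)}(K,T)$ and conclude nonvanishing of the localization. The only cosmetic difference is that you phrase the last step in terms of $\mm$-torsion ($C[\mm]=\mm^{\len(A)-1}C\subset\Sel_{\cF(\nn)}(K,T)[\mm]$) whereas the paper works with the quotient $C/\mm\inj\Sel_{\cF(\nn)}(K,T)/\mm\Sel_{\cF(\nn)}(K,T)$; these are equivalent via Lemma \ref{sel[m]}'s chain of isomorphisms.
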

 \begin{proof}
 By the lemma above, for infinitely many primes $\fl\in\cL$, we have an isomorphism:
 \begin{equation*}
     C\stackrel{\loc_\fl}{\lra}H^1_{unr}(K,T)\cong A,
 \end{equation*}
 which induces the following isomorphism going modulo $\mm$:
 \begin{equation*}
     C/\mm\stackrel{\loc_\fl}{\lra}H^1_{unr}(K,T/\mm T)\cong A/\mm.
 \end{equation*}
For each of these primes $\fl$, as $C/\mm\inj\Sel_{\cF(\nn)}(K,T)/{\mm}\Sel_{\cF(\nn)}(K,T)\cong \Sel_{\cF(\nn)}(K,T/\mm T)$, we have 
 $\loc_\fl(\Sel_{\cF(\nn)}(K,T/\mm T))\neq0$.
 \end{proof}
 
 \begin{lemma}\label{mod-m}
 Let $\loc_\fl(\Sel_{\cF(\nn\fl)}(K,T))\neq 0$ for some $\nn\in {\cN}$ and $\fl\in\cL$, $\fl \nmid \nn$. Then, we have an isomorphism $\loc_\fl(\Sel_{\cF(\nn\fl)}(K,T))\cong A$.
 \end{lemma}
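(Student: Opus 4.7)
The image $\loc_\fl(\Sel_{\cF(\nn\fl)}(K,T))$ is contained in the local condition at $\fl$, namely $H^1_{\cF(\nn\fl)}(K_\fl,T)=H^1_{\text{ord}}(K_\fl,T)$, which is a free $A$-module of rank one by the direct sum decomposition \eqref{decompo}. Hence the image is a cyclic $A$-submodule, of the form $\pi^j A \subset A$ for some $0\le j\le\len(A)$, and the desired conclusion $\loc_\fl(\Sel_{\cF(\nn\fl)}(K,T))\cong A$ is equivalent to showing $j=0$. By Proposition~\ref{length} this image has length $b$, where $a+b=\len(A)$ and $a=\len(\loc_\fl(\Sel_{\cF(\nn)}(K,T)))$; so $j=a$, and the task reduces to showing $a=0$ whenever the hypothesis $b>0$ holds.

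The plan is to combine Poitou--Tate global duality with the self-duality of $\cF$. These show that $L:=\loc_\fl(\Sel_{\cF^\fl(\nn)}(K,T))$ is a Lagrangian of length $\len(A)$ inside $H^1(K_\fl,T)=H^1_{\text{unr}}(K_\fl,T)\oplus H^1_{\text{ord}}(K_\fl,T)$ with respect to the local Tate pairing. Since $L\cap H^1_{\text{unr}}(K_\fl,T)=\loc_\fl(\Sel_{\cF(\nn)}(K,T))$ and $L\cap H^1_{\text{ord}}(K_\fl,T)=\loc_\fl(\Sel_{\cF(\nn\fl)}(K,T))$ have lengths $a$ and $b$ respectively, and $a+b=\len(L)$, the inclusion $L\supset (L\cap H^1_{\text{unr}})\oplus(L\cap H^1_{\text{ord}})$ is forced to be an equality. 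In other words, $L$ is ``diagonal'' in the decomposition of $H^1(K_\fl,T)$, so the analysis reduces to understanding this single cyclic summand inside $H^1_{\text{ord}}(K_\fl,T)$.

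Finally, to upgrade $b>0$ to $b=\len(A)$, I would invoke the cartesian property (Lemma~\ref{sel[m]}). Over the residue field, the induced localization $\Sel_{\cF(\nn\fl)}(K,T/\mm T)\to H^1_{\text{ord}}(K_\fl,T/\mm T)\cong A/\mm$ has one-dimensional target, so its image is automatically either $0$ or the full line. Via the identification $\Sel_{\cF(\nn\fl)}(K,T/\mm T)\cong \Sel_{\cF(\nn\fl)}(K,T)[\mm]$ supplied by Lemma~\ref{sel[m]}, one transfers this dichotomy to the $\mm$-torsion of the integral Selmer group; a nontrivial mod-$\mm$ image then produces a class in $\Sel_{\cF(\nn\fl)}(K,T)$ whose $\loc_\fl$ hits a generator of $H^1_{\text{ord}}(K_\fl,T)$, and Nakayama's lemma upgrades this to $\loc_\fl(\Sel_{\cF(\nn\fl)}(K,T))=H^1_{\text{ord}}(K_\fl,T)\cong A$.

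The principal obstacle is exactly this last dichotomy step: the Lagrangian-and-length analysis alone still permits intermediate cyclic images $\pi^j A$ with $0<j<\len(A)$. Ruling these out requires the cartesian compatibility to propagate nonvanishing from $T$ down to $T/\mm T$ (and back), so that the automatic dichotomy over the one-dimensional residue-field target lifts to the integral level and forces $j\in\{0,\len(A)\}$.
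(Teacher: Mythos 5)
Your first two paragraphs set up the right framework and are essentially correct: the image is $\pi^{j}A\subset H^{1}_{ord}(K_\fl,T)\cong A$ with $j=a$, Poitou--Tate makes $L:=\loc_\fl(\Sel_{\cF^\fl(\nn)})$ Lagrangian, and the length count forces $L=\pi^{a}A\oplus\pi^{b}A$. You also correctly flag, in your last paragraph, that this alone does not rule out $0<a,b<\len(A)$, since $\pi^{a}A\oplus\pi^{b}A$ with $a+b=\len(A)$ \emph{is} a Lagrangian. The problem is in how you try to close that gap.

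The Nakayama step does not work. Lemma~\ref{sel[m]} identifies $\Sel_{\cF(\nn\fl)}(K,T/\mm T)$ with the $\mm$-torsion $\Sel_{\cF(\nn\fl)}(K,T)[\mm]$, and this identification is induced by the composite $T/\mm T\xrightarrow{\pi^{\len A-1}}T[\mm]\hookrightarrow T$. So a class $\bar z$ with $\loc_\fl(\bar z)\neq0$ in $H^{1}_{ord}(K_\fl,T/\mm T)\cong A/\mm$ corresponds to a class $z\in\Sel_{\cF(\nn\fl)}(K,T)[\mm]$ whose $\loc_\fl(z)$ lands in $H^{1}_{ord}(K_\fl,T)[\mm]=\pi^{\len(A)-1}A$; it generates $A[\mm]$, \emph{not} a generator of $A$. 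Having nonzero $\mm$-torsion in the image is not the same as nonzero image in $A/\mm A$, which is what Nakayama requires, so nothing forces $j=0$. (Indeed $\loc_\fl(\Sel_{\cF(\nn\fl)}(K,T)[\mm])\subset(\pi^{j}A)[\mm]=\pi^{\len(A)-1}A$ for any $j\le\len(A)-1$, so the $\mm$-torsion image carries no information about $j$.) You also never establish that the mod-$\mm$ image is nontrivial in the first place; the dichotomy ``either $0$ or the full line'' is vacuously true for a $1$-dimensional target.

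What the paper actually does is different in spirit: it never tries to promote the residual nonvanishing to a generator of $A$. Instead it argues by contradiction using \emph{both} vertices $\nn$ and $\nn\fl$ at once. Assuming $0<b<\len(A)$ (hence also $a>0$), it deduces that $\loc_\fl(\Sel_{\cF(\nn\fl)}(K,T/\mm T))\neq0$ and $\loc_\fl(\Sel_{\cF(\nn)}(K,T/\mm T))\neq0$ simultaneously, which contradicts Proposition~\ref{selmer-facts}(iii) (over the residue field the isotropic line $\loc_\fl(\Sel_{\cF^\fl(\nn)}(K,T/\mm T))$ must be one coordinate axis, so at most one of the two residual localizations can be nonzero). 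Your proposal works only at the $\nn\fl$ vertex and discards the information about $\nn$ that the length identity $a+b=\len(A)$ makes available; that interplay is precisely the content of the paper's argument, and reinstating it is what would close your gap.
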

 
 \begin{proof}
 Consider the commutative diagram
 \begin{equation*}\label{mod-m-diagram}
     \xymatrixcolsep{5pc}\xymatrix{
    \Sel_{\cF(\nn\fl)}(K,T) \ar[r]^{\loc_\fl} \ar[d] & H^1_{ord}(K_\fl,T)\cong A \ar[d]\\
     \Sel_{\cF(\nn\fl)}(K,T/\mm) \ar[r]^{\loc_\fl} & H^1_{ord}(K_\fl,T/\mm)\cong A/\mm.
     }
 \end{equation*}
 Suppose  $\loc_\fl(\Sel_{\cF(\nn\fl)}(K,T))\ncong A$. Since $\loc_\fl(\Sel_{\cF(\nn\fl)}(K,T)) \neq 0$, we have $\loc_{\fl}(\Sel_{\cF(\nn\fl)}(K,T/{\mm}T))\neq 0$. So, we have $\loc_{\fl}(\Sel_{\cF(\nn\fl)}(K,T/{\mm}T))\cong A/\mm$, as the lower horizontal map is $A/\mm$-vector space homomorphism. By Proposition \ref{selmer-facts} \ref{length}, we have $\loc_\fl(\Sel_{\cF(\nn)}(K,T/\mm T))=0$. However, it can be seen that $\len(\loc_\fl(\Sel_{\cF(\nn\fl)}(K,T))\neq \len(A)$, hence  $\loc_\fl(\Sel_{\cF(\nn)}(K,T))\neq 0$. It follows that that  $\loc_\fl(\Sel_{\cF(\nn)}(K,T/\mm T))\cong A/\mm$, which is a contradiction. Thus  $\loc_\fl(\Sel_{\cF(\nn\fl)}(K,T))\cong A$.
 \end{proof}
 
\begin{remark}
   In the irreducible case, as mentioned in Theorem \ref{HBD}, the error term $\varepsilon_0$ does not appear. 
 \end{remark}

 \section{Bipartite Euler System over Artinian Rings}\label{bipartite}
We continue with the notations introduced in Sections \ref{selmer} and \ref{section-key}, along with the Hypothesis \ref{key-hypothesis}. We still continue to denote $T_\alpha$ by $T$. Following Howard \cite[Def 2.3.2]{hbes}, we define a bipartite Euler system as follows:
\begin{definition}\label{BES}  A \emph{bipartite Euler system of odd type} for $(T,\mathcal{F},\mathcal{L})$ is a pair of families
\begin{equation*}
   \{\kappa_\nn{\in}\Sel_{\mathcal{F}(\nn)}(K,T)\mid \nn\in{\mathcal{N}^{odd}}\}  \mbox{ and } \{\lambda_\nn{\in}A\mid\nn\in{\mathcal{N}^{even}}\}
\end{equation*}
related by the following reciprocity laws:
\begin{enumerate}
    \item for any $\nn\fl{\in}{\mathcal{N}^{odd}}$, there exists an isomorphism of $A$-modules
\begin{equation*}
    A/(\lambda_\nn)\cong H^1_{ord}(K_\fl,T)/A.\loc_\fl(\kappa_{\nn\fl}),
\end{equation*}
   \item for any $\nn\fl{\in} \mathcal{N}^{even}$, there exists an isomorphism of $A$-modules
\begin{equation*}
    A/(\lambda_{\nn\fl}) \cong H^1_{unr}(K_\fl,T)/A.\loc_\fl(\kappa_\nn).
\end{equation*}
\end{enumerate}
\end{definition}
A \emph{bipartite Euler system of even type} is defined in the same way, but with the even and odd term interchanged everywhere in the definition. An Euler system is said to be \emph{non-trivial} if $\lambda_\nn\neq0$ for some $\nn$. By the reciprocity law this is equivalent to saying $\kappa_{\nn\fl} {\neq} 0$ for some $\fl\in\cL$. 

 \begin{prop}\label{even type}
Let $\len(A)>\varepsilon_0$. Then there are no nontrivial Euler systems for $(T,\mathcal{F},\mathcal{L})$ of even type.
 \end{prop}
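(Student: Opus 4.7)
The plan is to argue by contradiction. Assume the Euler system is nontrivial; by Definition~\ref{BES}, there exists $\nn \in \cN^{even}$ with $\kappa_\nn \neq 0$. Choose $\nn_0 \in \cN^{even}$ with $\kappa_{\nn_0} \neq 0$ minimizing the number of prime factors $\omega(\nn_0)$.

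The first step, valid when $\omega(\nn_0) \geq 2$, establishes that $\loc_\fl(\kappa_{\nn_0}) = 0$ for every $\fl \mid \nn_0$. For any two distinct primes $\fl, \fl' \mid \nn_0$, write $\nn_0 = \nn_0''\fl\fl'$ with $\nn_0'' \in \cN^{even}$. By minimality $\kappa_{\nn_0''} = 0$, so the second reciprocity of even type at $\nn = \nn_0''$ and prime $\fl'$ gives $A/(\lambda_{\nn_0/\fl}) \cong H^1_{unr}(K_{\fl'}, T)$, forcing $\lambda_{\nn_0/\fl} = 0$. The first reciprocity at $\nn = \nn_0/\fl \in \cN^{odd}$ and prime $\fl$ then yields $A \cong H^1_{ord}(K_\fl, T)/A\loc_\fl(\kappa_{\nn_0})$, forcing $\loc_\fl(\kappa_{\nn_0}) = 0$. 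Hence $\kappa_{\nn_0}$ lies in the strict Selmer group $\Sel_{\cF_{\nn_0}}(K, T) \subseteq \Sel_\cF(K, T)$; when $\nn_0 = 1$ this step is vacuous.

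Next, using $\Sel_{\cF(\nn_0)} = M_{\nn_0} \oplus M_{\nn_0}$, I take $c_1, c_2$ to be generators of the two $M_{\nn_0}$-summands as in the proof of Proposition~\ref{cocycle}, so that $Ac_1 + Ac_2 \supset \mm^{k-s} A \oplus \mm^{k-s} A$ with $s = \exp(M_{\nn_0})$. Applying Theorem~\ref{order} to $c_3 = \kappa_{\nn_0}$ under the hypothesis $\len(A) > \varepsilon_0$ (after possibly refining $\nn_0$ to ensure $\ord(\kappa_{\nn_0}) > \varepsilon_0$) yields infinitely many $\fl^\ast \in \cL$, $\fl^\ast \nmid \nn_0$, with $\loc_{\fl^\ast}(\kappa_{\nn_0}) \neq 0$. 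The second reciprocity at $\nn_0$ and $\fl^\ast$ then gives $\lambda_{\nn_0\fl^\ast} \neq 0$. For any $\fl \mid \nn_0$, the 4-cycle $\nn_0/\fl \to \nn_0 \to \nn_0\fl^\ast \to \nn_0\fl^\ast/\fl \to \nn_0/\fl$ in the bipartite graph yields, by traversing its four reciprocities, the identities $\ord(\loc_\fl(\kappa_{\nn_0})) = \ord(\loc_{\fl^\ast}(\kappa_{\nn_0\fl^\ast/\fl}))$ and $\ord(\loc_{\fl^\ast}(\kappa_{\nn_0})) = \ord(\loc_\fl(\kappa_{\nn_0\fl^\ast/\fl}))$. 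The first, combined with $\loc_\fl(\kappa_{\nn_0}) = 0$ from Step~1, forces $\loc_{\fl^\ast}(\kappa_{\nn_0\fl^\ast/\fl}) = 0$. The vertex $\nn_0\fl^\ast/\fl$ has the same $\omega$-value as $\nn_0$, and re-applying the Step~1 argument (whose invocations of minimality only require strictly smaller $\omega$) derives $\loc_{\fl''}(\kappa_{\nn_0\fl^\ast/\fl}) = 0$ for all $\fl'' \mid \nn_0\fl^\ast/\fl$. Iterating over the infinitely many choices of $\fl^\ast$ and invoking a pigeonhole argument in the finite group $\Sel_\cF(K, T)$ produces two indices $\fl^\ast_1 \neq \fl^\ast_2$ with $\kappa_{\nn_0\fl^\ast_1/\fl} = \kappa_{\nn_0\fl^\ast_2/\fl}$, leading to incompatible localization constraints (vanishing at both $\fl^\ast_1$ and $\fl^\ast_2$ but nonvanishing at $\fl$, with further obstructions from the 4-cycle identities applied to each) and hence the required contradiction.

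The principal obstacle is rigorously completing this pigeonhole/exchange step in the presence of the error $\varepsilon_0$. In Howard's irreducible case (\cite[Prop.~2.3.3]{hbes}, where $\varepsilon_0 = 0$), the 4-cycle identities combine with sharp Chebotarev-type nonvanishing to yield the contradiction directly; here Theorem~\ref{order} only produces order bounds up to the slack $\varepsilon_0$, so the iteration must be controlled carefully and the hypothesis $\len(A) > \varepsilon_0$ is what keeps nonvanishing alive throughout. A secondary obstacle is the base case $\omega(\nn_0) = 0$, where no prime $\fl \mid \nn_0$ exists to form the 4-cycle; this requires a separate direct argument applying Lemma~\ref{key-lemma} to $\kappa_1$ with two distinct primes $\fl_1, \fl_2$ and comparing the reciprocity relations among $\lambda_{\fl_1}, \lambda_{\fl_2}$, and $\kappa_{\fl_1\fl_2}$ to derive the contradiction.
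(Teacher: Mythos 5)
Your approach is genuinely different from the paper's, and it has gaps that make it incomplete. The paper's proof is far shorter and sidesteps every obstacle you list.

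You start from an even vertex $\nn_0$ with $\kappa_{\nn_0}\neq 0$ and try to localize $\kappa_{\nn_0}$ nontrivially via Theorem~\ref{order}. This is where the first real gap appears: Theorem~\ref{order} only gives $\ord(\loc_{\fl}(c_3))\geq\ord(c_3)-\varepsilon_0$, so you must ensure $\ord(\kappa_{\nn_0})>\varepsilon_0$. But there is no a priori reason a nonzero $\kappa$-class at an even vertex has order exceeding $\varepsilon_0$: the hypothesis $\len(A)>\varepsilon_0$ controls the ring, not the order of $\kappa_{\nn_0}$, and your parenthetical ``after possibly refining $\nn_0$'' is not backed by an actual argument. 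The subsequent 4-cycle/pigeonhole step you invoke is, as you yourself flag, also left unfinished, and the base case $\omega(\nn_0)=0$ is only promised a ``separate direct argument.''

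The paper avoids all of this by attacking from the $\lambda$-side at an \emph{odd} vertex. If the even-type system is nontrivial, some $\nn\in\cN^{odd}$ has $\lambda_\nn\neq0$. Parity forces $e_\nn=1$ in the decomposition $\Sel_{\cF(\nn)}\cong A^{e_\nn}\oplus M_\nn\oplus M_\nn$, so $\Sel_{\cF(\nn)}$ is guaranteed to contain a cyclic \emph{free} rank-one submodule $C\cong A$ whose generator has order exactly $\len(A)>\varepsilon_0$ — no refinement or order estimate needed. Lemma~\ref{key-lemma} (the Chebotarev substitute, which requires exactly a free rank-one submodule) then produces $\fl\in\cL$ with $\loc_\fl(C)\cong H^1_{unr}(K_\fl,T)$, so $\loc_\fl(\Sel_{\cF(\nn)})$ has length $\len(A)$. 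By Proposition~\ref{selmer-facts}(ii) the complementary length is zero, i.e.\ $\loc_\fl(\Sel_{\cF(\nn\fl)})=0$, and in particular $\loc_\fl(\kappa_{\nn\fl})=0$. The even-type reciprocity law then gives $A/(\lambda_\nn)\cong H^1_{ord}(K_\fl,T)\cong A$, forcing $\lambda_\nn=0$ — a contradiction. The decisive idea you are missing is that for an even-type system the $\lambda$'s live at odd vertices, and the parity constraint $e_\nn=1$ hands you, for free, a rank-one free submodule to feed into the Key Lemma.
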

 \begin{proof} Suppose there exists a non-trivial Euler system of \emph{even} type. Then for some $\nn \in \mathcal{N}^{odd}$, $\lambda_\nn \neq 0$.
  As $e_{\nn}=1$, $\Sel_{\mathcal{F}(\nn)}$ contains a free $A$-module of rank 1, and 
  by Lemma \ref{key-lemma}, $\loc_\fl(\Sel_{\cF(\nn)}(K,T))\neq0$ for some $\fl\in\cL$.
  It follows that the injective map 
 \begin{equation*}
      \Sel_{\mathcal{F}(\nn)}(K,T)/\Sel_{\mathcal{F}_\fl(\nn)}(K,T) \lra H^1_{unr}(K_\fl,T) \cong A
\end{equation*}
  is an isomorphism. By Proposition \ref{length}, we get $\Sel_{\mathcal{F}(\nn)}(K,T)=\Sel_{\mathcal{F}(\nn\fl)}(K,T)$. Therefore, $\loc_\fl(\kappa_{\nn\fl})=0$, which is not possible by the reciprocity law.
\end{proof}  

\begin{prop}\label{annihilate}
Let $\len(A)=k>\varepsilon_0$, and consider a non-trivial Euler system of odd type for $(T,\mathcal{F},\mathcal{L})$. Let $\Sel_{\cF(\nn)}(K,T)\cong A^{e_\nn}\oplus M_{\nn}\oplus M_\nn$ be as in the Proposition 
\ref{selmer-facts}\ref{structure}. We have
\begin{enumerate}
    \item for $\nn \in \mathcal{N}^{even}$, if $\lambda_\nn\neq 0$, then $\mm^{k-1}M_{\nn}=0$, 
    \item for $\nn \in \mathcal{N}^{odd}$,  
    if $\kappa_\nn\neq0$ and is contained in a cyclic $A$-free rank one submodule $C$ of $\Sel_{\cF(\nn)}(K,T)$, 
    then $\mm^{k-1}M_{\nn}=0$. 
\end{enumerate}
\end{prop}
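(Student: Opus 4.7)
My plan is to prove both parts by contradiction, assuming $\mm^{k-1}M_\nn\ne 0$ so that $M_\nn$ contains an element $m$ of maximal order $k$; the submodule $Am\cong A$ is then a cyclic free rank-one $A$-submodule of $\Sel_{\cF(\nn)}$. The common engine is Lemma \ref{key-lemma}: applied to a cyclic free rank-one submodule $D\subset\Sel_{\cF(\nn)}$ it produces infinitely many $\fl\in\cL$ with $\loc_\fl(D)\cong H^1_{unr}(K_\fl,T)$. Since $\fl$ is coprime to $\nn$, the Selmer condition for $\cF(\nn)$ at $\fl$ is unramified, so at such an $\fl$ we obtain $\loc_\fl(\Sel_{\cF(\nn)})=H^1_{unr}(K_\fl,T)$, hence $a=k$, and Proposition \ref{selmer-facts}\ref{length} gives $b=0$, i.e.\ $\loc_\fl(\Sel_{\cF(\nn\fl)})=0$ inside $H^1_{ord}(K_\fl,T)$.

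For Part 1, where $\nn\in\cN^{even}$ and $\Sel_{\cF(\nn)}\cong M_\nn\oplus M_\nn$, I take $D=Am$ inside the first $M_\nn$-summand. Since $\nn\fl\in\cN^{odd}$, the class $\kappa_{\nn\fl}\in\Sel_{\cF(\nn\fl)}$ is defined, and the vanishing above forces $\loc_\fl(\kappa_{\nn\fl})=0$. The first reciprocity law of Definition \ref{BES} then yields $A/(\lambda_\nn)\cong H^1_{ord}(K_\fl,T)\cong A$, i.e.\ $\lambda_\nn=0$, contradicting the hypothesis.

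For Part 2, where $\nn\in\cN^{odd}$ and $e_\nn=1$, I take $D=C$ and write $\kappa_\nn=\pi^jc$ for a generator $c$ of $C$ and some $0\le j<k$. At a resulting $\fl$, $\loc_\fl(c)$ generates $H^1_{unr}(K_\fl,T)$, so $A\loc_\fl(\kappa_\nn)=\mm^j H^1_{unr}(K_\fl,T)$, and the second reciprocity gives $A/(\lambda_{\nn\fl})\cong A/\mm^j$, hence $\lambda_{\nn\fl}\ne 0$. Applying the already-proved Part 1 to $\nn\fl\in\cN^{even}$ yields $\mm^{k-1}M_{\nn\fl}=0$, and Proposition \ref{length+-} (with $b=0$) gives $\len(M_\nn)=\len(M_{\nn\fl})$. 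To extract the contradiction I would exploit the module-theoretic splitting $\Sel_{\cF(\nn)}=\Sel_{\cF_\fl(\nn)}\oplus Am_0$, where $m_0$ is chosen so that $\loc_\fl(m_0)$ generates $H^1_{unr}(K_\fl,T)$. Writing $m=x+\beta m_0$ in this decomposition: either $\loc_\fl(m)=0$, whence $m=x\in\Sel_{\cF_\fl(\nn)}=\Sel_{\cF(\nn\fl)}$ still has order $k$, placing a free rank-one summand into $M_{\nn\fl}$ and contradicting $\mm^{k-1}M_{\nn\fl}=0$; or else $\loc_\fl(m)\ne 0$, in which case Theorem \ref{order} applied to the rank-two free submodule $Am\oplus Am'\subset M_\nn\oplus M_\nn$ (with varying $c_3\in\{c,m\}$) allows us to vary $\fl$ until the $\Sel_{\cF_\fl(\nn)}$-component $x=m-\beta m_0$ still has order $k$, reaching the same contradiction.

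The main obstacle is this last step: in the irreducible setting of Howard, Part 1 gives the stronger conclusion $M_{\nn\fl}=0$, which together with the length identity immediately forces $M_\nn=0$, whereas in our weaker setting the bound $\mm^{k-1}M_{\nn\fl}=0$ is consistent with $M_{\nn\fl}$ having large length, so one must carefully transport the free rank-one summand of $M_\nn$ through the local splitting into $M_{\nn\fl}$ — and the flexibility needed to do so simultaneously for $m$ and for $c$ is precisely what Theorem \ref{order} buys us at the cost of the error term $\varepsilon_0$.
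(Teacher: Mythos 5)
Your Part~1 is correct and essentially identical to the paper's: pick a free rank-one $C\subset\Sel_{\cF(\nn)}=M_\nn\oplus M_\nn$, use Lemma~\ref{key-lemma} to find $\fl$ with $\loc_\fl(C)\cong H^1_{unr}(K_\fl,T)$, deduce $\loc_\fl(\Sel_{\cF(\nn\fl)})=0$ from Proposition~\ref{selmer-facts}(ii), hence $\loc_\fl(\kappa_{\nn\fl})=0$, and the first reciprocity law gives $\lambda_\nn=0$.

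For Part~2, your logical frame is a valid contrapositive of the paper's, but the step that is supposed to produce the contradiction has a genuine gap. You pick $\fl$ with $\loc_\fl(C)\cong A$, correctly deduce $\lambda_{\nn\fl}\neq 0$ from the second reciprocity law, then $\mm^{k-1}M_{\nn\fl}=0$ from Part~1, and now want to manufacture an element of order $k$ inside $\Sel_{\cF_\fl(\nn)}=\Sel_{\cF(\nn\fl)}$. Your splitting $\Sel_{\cF(\nn)}=\Sel_{\cF_\fl(\nn)}\oplus Am_0$ is fine, but the component $x=m-\beta m_0$ of $m$ in $\Sel_{\cF_\fl(\nn)}$ need not have order $k$ (it can vanish outright if $m\in Am_0$), and Theorem~\ref{order} cannot repair this: it controls $\ord(\loc_\fl(c_3))$ for a \emph{single} cocycle $c_3$, not the decomposition of $m$ relative to a local splitting, nor the simultaneous behavior of two chosen classes. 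Your closing paragraph misdiagnoses the obstacle: no ``flexibility'' or varying of $\fl$ is required. What you are missing is the elementary module-theoretic fact (this is precisely how the paper proceeds): if $D'\subset\Sel_{\cF(\nn)}$ is any free $A$-submodule of rank two --- here available because $e_\nn=1$ and $\mm^{k-1}M_\nn\neq 0$, so one may take $D'=Ae\oplus Am$ with $e$ a generator of the $A^{e_\nn}$ summand --- then for \emph{every} $\fl\in\cL$ the kernel of $\loc_\fl\colon D'\to H^1_{unr}(K_\fl,T)\cong A$ contains a free rank-one $A$-submodule (for a map $A^2\to A$ sending $e\mapsto a$, $m\mapsto b$, say $v(a)\geq v(b)$, then $e-tm$ with $a=tb$ is killed and has order $k$). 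Plugging this in at your chosen $\fl$ immediately gives an order-$k$ element of $\Sel_{\cF_\fl(\nn)}=\Sel_{\cF(\nn\fl)}$ and hence $\mm^{k-1}M_{\nn\fl}\neq 0$, the desired contradiction. The paper in fact never singles out an $\fl$: it applies this kernel observation to arbitrary $\fl$ to conclude $\mm^{k-1}M_{\nn\fl}\neq 0$ for all $\fl$, hence $\lambda_{\nn\fl}=0$ for all $\fl$ by Part~1, hence $\loc_\fl(\kappa_\nn)=0$ for all $\fl$ by the second reciprocity law, and only then invokes Lemma~\ref{key-lemma} on $C$ to contradict $\kappa_\nn\neq 0$. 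That route is cleaner, but yours closes equally well once the kernel fact replaces the splitting-plus-Theorem~\ref{order} argument.
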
  
\begin{proof} 
 (i) Let $\nn\in\cN^{even}$ such that ${\mm}^{k-1}M_{\nn} \neq 0$. This implies $\Sel_{\mathcal{F}(\nn)}(K,T)$ contains a free submodule of rank one, say $C$. Let $\fl \in \mathcal{L}$ such that $\fl \nmid \nn$ and 
 $\loc_{\fl}(C)\cong H^1_{unr}(K_{\fl},T)$ (using Lemma \ref{key-lemma}). By Proposition \ref{selmer-facts} \ref{length}, $\loc_{\fl}(\Sel_{\mathcal{F}(\nn \fl)}(K,T))=0$, from which we get $\loc_{\fl}(\kappa_{\nn \fl})=0$. By the first reciprocity laws, $\lambda_{\nn}=0$, which gives a contradiction.
 
 (ii) Let $\nn\in\cN^{odd}$ such that ${\mm}^{k-1}M_{\nn} \neq 0$. By Proposition \ref{selmer-facts}\ref{structure} we know $\Sel_{\mathcal{F}(\nn)}(K,T)$ contains a free submodule of rank two, say $D$. Thus by equation \eqref{decompo}, the kernel of the following map, 
\begin{equation*}
    \loc_{\fl}: \Sel_{\mathcal{F}(\nn)}(K,T){\lra}H^1_{unr}(K_{\fl},T)
\end{equation*}
which is $\Sel_{\mathcal{F}_{\fl}(\nn)}(K,T)$ contains a free submodule of rank one for any $\fl\in\cL$. As $\Sel_{\mathcal{F}_{\fl}(\nn)}(K,T) \subset \Sel_{\mathcal{F}(\nn \fl)}(K,T)$, and $\nn\fl\in\cN^{even}$, so $\mm^{k-1}M_{\nn\fl}\neq0$, otherwise $\mm^{k-1}\Sel_{\cF_\fl(\nn)}(K,T)=0$.  
By part (i), $\lambda_{\nn\fl}=0$. 
By the second reciprocity law, we have, $\loc_{\fl}(\kappa_{\nn})=0$ for all $\fl\in\cL, \fl\nmid\nn$. For $\kappa_\nn\neq0$, this gives a contradiction as there are infinitely many primes $\fl\in\cL$, such that $\loc_\fl(C)\cong A$ ( by Lemma \ref{key-lemma}). 
\end{proof}
\begin{definition} A non-trivial Euler system of \emph{odd} type is said to be \emph{free} if for every $\nn \in \mathcal{N}^{odd}$, $\kappa_\nn$ is contained in a cyclic submodule of $\Sel_{\cF(\nn)}(K,T)$ which is $A$-free of rank one.
\begin{definition}{\cite[Def 2.2.8]{hbes}}  
For $\nn{\in}{\mathcal{N}}$, we recall the definition of the \emph{stub module}
\begin{eqnarray*}
\stub_{\nn}&=& \begin{cases}
                {\mm}^{\len(M_{\nn})}A,  &  \mbox{ if } \nn \in {\cN}^{even} \\ 
                {\mm}^{\len(M_{\nn})}\Sel_{\cF(\nn)} , &\mbox{ if } \nn \in {\cN}^{odd}.
              \end{cases}
\end{eqnarray*}
\end{definition}
The stub modules for $\nn$ and $\nn\fl$ are related to each other in the following manner.

\begin{prop}[{\cite[Cor 2.2.13]{hbes}}]\label{loc stub}
Let $\nn\fl{\in}{\mathcal{N}}$. Then there is an isomorphism of $A$-modules 
\begin{equation*}
    \loc_\fl(\stub_{\nn}){\cong}\stub_{\nn\fl}, \mbox{ if } \nn\in{\mathcal{N}^{odd}}
\end{equation*} and
\begin{equation*}
\loc_\fl(\stub_{\nn\fl}){\cong}\stub_\nn, \mbox{ if } \nn\in{\mathcal{N}^{even}}.
\end{equation*}
\end{prop}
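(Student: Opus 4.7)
The plan is to unfold the definition of $\stub_\nn$ in each case and apply the $A$-linearity of $\loc_\fl$, then read off lengths from Proposition \ref{selmer-facts}\ref{length} and Proposition \ref{length+-} to identify the image precisely. Because $A$ is a principal Artinian local ring, its only submodules are the ideals $\mm^j A$ with $0 \le j \le \len(A)$, so identifying a submodule of $A$ amounts to computing its length.

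For $\nn \in \mathcal{N}^{odd}$ (so $\nn\fl \in \mathcal{N}^{even}$), I would write $\stub_\nn = \mm^{\len(M_\nn)}\Sel_{\cF(\nn)}$ and pull the ideal factor through $\loc_\fl$ to get $\loc_\fl(\stub_\nn) = \mm^{\len(M_\nn)}\loc_\fl(\Sel_{\cF(\nn)})$. By Proposition \ref{selmer-facts}\ref{length} the image $\loc_\fl(\Sel_{\cF(\nn)}) \subset H^1_{unr}(K_\fl,T) \cong A$ has length $a$, hence equals $\mm^{\len(A)-a}A$. Therefore $\loc_\fl(\stub_\nn) = \mm^{\len(M_\nn) + \len(A) - a}A$, and comparison with $\stub_{\nn\fl} = \mm^{\len(M_{\nn\fl})}A$ reduces to the identity $\len(M_{\nn\fl}) = \len(M_\nn) + b$, which is precisely the odd case of Proposition \ref{length+-} once we substitute $a + b = \len(A)$.

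For $\nn \in \mathcal{N}^{even}$ (so $\nn\fl \in \mathcal{N}^{odd}$), the computation is symmetric: $\loc_\fl(\stub_{\nn\fl}) = \mm^{\len(M_{\nn\fl})}\loc_\fl(\Sel_{\cF(\nn\fl)})$, and this time $\loc_\fl(\Sel_{\cF(\nn\fl)}) \subset H^1_{ord}(K_\fl,T) \cong A$ equals $\mm^{\len(A)-b}A = \mm^{a}A$ by Proposition \ref{selmer-facts}\ref{length}. The required equality $\len(M_{\nn\fl}) + a = \len(M_\nn)$ is precisely the even case of Proposition \ref{length+-}.

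The only real obstacle is bookkeeping: one must carefully track which Selmer group contributes $a$ versus $b$ (the former enters via the image in the unramified part $H^1_{unr}(K_\fl,T)$, the latter via the ordinary part $H^1_{ord}(K_\fl,T)$), and keep in mind that in the odd case the stub is defined as a submodule of $\Sel_{\cF(\nn)}$ while in the even case it is an ideal of $A$. Once these conventions are aligned, the proposition is a direct consequence of the $A$-linearity of $\loc_\fl$ together with the length identities already established in Section \ref{selmer}.
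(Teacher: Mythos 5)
Your proof is correct and is a natural filling-in of the cited result, which this paper states without proof (it defers to Howard's Corollary 2.2.13). The bookkeeping checks out in both cases: for $\nn$ odd you use $\loc_\fl(\Sel_{\cF(\nn)})=\mm^{\len(A)-a}A$ inside $H^1_{unr}(K_\fl,T)\cong A$, pull the $\mm^{\len(M_\nn)}$ factor through by $A$-linearity, and invoke $\len(M_{\nn\fl})=\len(M_\nn)+b$ together with $a+b=\len(A)$; for $\nn$ even the roles of $a,b$ and the unramified/ordinary components swap symmetrically and the identity $\len(M_\nn)=\len(M_{\nn\fl})+a$ from Proposition~\ref{length+-} closes the argument. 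One remark worth making explicit: since all the modules involved are submodules of $A$ (which is cyclic over a principal Artinian local ring), equality of lengths automatically yields that the two submodules coincide, not merely that they are abstractly isomorphic, so the asserted isomorphism is in fact an equality of ideals of $A$.
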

\end{definition}
Let $\rho_0:=\operatorname{max}\{\rho(\nn): \nn \in \cN\}$.
For $S$ a finite set of primes, let $G_S$ denotes the Galois group of the maximal extension of $K$ unramified outside $S$ over $K$. Then, it follows from the \emph{Hermite-Minkowski Theorem} that $\dim_{A/\mm}H^1(G_S,T/{\mm}T)$ is finite. So $\rho_0$ is bounded above by $\dim_{A/\mm}H^1(G_S,T/{\mm}T)$. We define $\varepsilon:=\varepsilon_0{\rho_0}$. Then $\varepsilon$ is a non-negative integer independent of the length of $A$.
\begin{theorem}\label{bound} 
Let $\len(A)>\varepsilon_0$.
For any free Euler system of odd type for $(T,\mathcal{F},\mathcal{L})$, ${\pi}^\varepsilon\lambda_\nn \in \stub_\nn$ for every $\nn \in \mathcal{N}^{even}$, and ${\pi}^\varepsilon\kappa_\nn \in \stub_\nn$ for every  $\nn \in \mathcal{N}^{odd}$. Equivalently, in terms of the $A$-module $M_\nn$ in the decomposition 
$\Sel_{\cF(\nn)}\cong A^{e_\nn}\oplus M_\nn\oplus M_\nn$, we have
\begin{equation*}
      \len(M_\nn) \leq \begin{cases}
          \ind(\lambda_\nn,A)+\varepsilon &\mbox{ if }\nn \in \mathcal{N}^{even}\\
          \ind(\kappa_\nn,\Sel_{\mathcal{F}(\nn)}(K,T))+\varepsilon &\mbox{ if } \nn \in \mathcal{N}^{odd}.
      \end{cases}
\end{equation*}
\end{theorem}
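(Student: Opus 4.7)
I will prove both statements simultaneously by strong induction on $m := \len(M_\nn)$, with a stratified ordering: within each inductive layer $m > \varepsilon$, the even case will be handled before the odd case, because the latter invokes the former at the \emph{same} length $m$. The base case $m \leq \varepsilon$ is immediate since the ambient modules of $\stub_\nn$ are $A$ (even) and $\Sel_{\mathcal{F}(\nn)}$ (odd), and $\mm^\varepsilon \subseteq \mm^m$ forces $\pi^\varepsilon \lambda_\nn$ (resp.\ $\pi^\varepsilon \kappa_\nn$) into $\mm^m \cdot (\text{ambient}) = \stub_\nn$.

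For the inductive step at an even vertex $\nn$ with $m > \varepsilon$ and $\lambda_\nn \neq 0$: because $\rho(\nn) = 2\dim_{A/\mm}(M_\nn/\mm M_\nn) \leq \rho_0$, the structure theorem gives $s := \exp(M_\nn) \geq 2m/\rho_0 > \varepsilon_0$. Pick $x \in M_\nn$ of order $s$; the pair $c_1 := (x, 0)$, $c_2 := (0, x)$ in $M_\nn \oplus M_\nn \cong \Sel_{\mathcal{F}(\nn)}$ satisfies the hypothesis of Theorem \ref{order}, producing infinitely many $\fl \in \mathcal{L}$ coprime to $\nn$ with $\ord(\loc_\fl(x)) \geq s - \varepsilon_0 > 0$, hence $a := \len(\loc_\fl(\Sel_{\mathcal{F}(\nn)})) > 0$. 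By Proposition \ref{length+-}, $\len(M_{\nn\fl}) = m - a < m$, so the induction hypothesis applied to $\nn\fl \in \mathcal{N}^{odd}$ gives $j := \ind(\kappa_{\nn\fl}, \Sel_{\mathcal{F}(\nn\fl)}) \geq (m - a) - \varepsilon$. Since $\loc_\fl(\Sel_{\mathcal{F}(\nn\fl)})$ is a length-$b$ submodule of $H^1_{ord}(K_\fl, T) \cong A$, it equals $\mm^a H^1_{ord}(K_\fl, T)$ (using $a + b = k$); writing $\kappa_{\nn\fl} = \pi^j y$ and localizing gives $\loc_\fl(\kappa_{\nn\fl}) \in \mm^{j+a} H^1_{ord}(K_\fl, T)$, and the first reciprocity law upgrades this to $\ind(\lambda_\nn, A) \geq j + a \geq m - \varepsilon$, i.e.\ $\pi^\varepsilon \lambda_\nn \in \mm^m A = \stub_\nn$.

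For the inductive step at an odd vertex $\nn$ with $m > \varepsilon$ and $\kappa_\nn \neq 0$: the free-Euler-system hypothesis places $\kappa_\nn$ in a cyclic free rank-one submodule $C \subseteq \Sel_{\mathcal{F}(\nn)}$. Lemma \ref{key-lemma} yields $\fl \in \mathcal{L}$, $\fl \nmid \nn$, with $\loc_\fl : C \xrightarrow{\sim} H^1_{unr}(K_\fl, T)$, so $a = k$, $b = 0$, and $\len(M_{\nn\fl}) = m$ by Proposition \ref{length+-}. Writing $\kappa_\nn = \pi^t c$ with $c$ a generator of $C$ and $t := \ind(\kappa_\nn, C)$, the isomorphism gives $v(\loc_\fl(\kappa_\nn)) = t$, so the second reciprocity law yields $\ind(\lambda_{\nn\fl}, A) = t$. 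Invoking the even case at length $m$ just established in this layer gives $t + \varepsilon \geq m$, and combining with $\ind(\kappa_\nn, \Sel_{\mathcal{F}(\nn)}) \geq \ind(\kappa_\nn, C) = t$ produces $\pi^\varepsilon \kappa_\nn \in \mm^m \Sel_{\mathcal{F}(\nn)} = \stub_\nn$.

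The main obstacle, compared with the irreducible case treated in \cite{hbes}, is that Theorem \ref{order} loses $\varepsilon_0$ per application, so to guarantee strict descent $\len(M_{\nn\fl}) < \len(M_\nn)$ in the even step one must produce an element $x \in M_\nn$ with $\ord(x) > \varepsilon_0$. The pigeonhole bound $s = \exp(M_\nn) \geq 2m/\rho_0$, coming from $\rho(\nn) \leq \rho_0$, is precisely what makes this possible whenever $m > \varepsilon = \varepsilon_0 \rho_0$, and this is exactly why that constant appears in the statement. The stratified ordering (even before odd at the same length) is already forced in the irreducible argument, since Lemma \ref{key-lemma} produces a same-length neighbour in the odd step, so the architecture of the induction is inherited; the new content is the quantitative pigeonhole that allows the single error term $\varepsilon$ to absorb all of the arithmetic losses incurred along the way.
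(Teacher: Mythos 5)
Your proof is correct, and while it uses the same toolkit as the paper (Theorem \ref{order} to manufacture a prime with $\ord(\loc_\fl(\cdot)) \geq \ord(\cdot) - \varepsilon_0$ at an even vertex, Lemma \ref{key-lemma} at an odd vertex, the two reciprocity laws, and the pigeonhole bound $\exp(M_\nn) > \varepsilon_0$ when $\len(M_\nn) > \varepsilon$), the induction is organized differently. The paper inducts on $\rho(\nn) = \dim_{A/\mm}\Sel_{\cF(\nn)}(K, T/\mm T)$, which drops by exactly one in both the even and odd steps, so a single descending induction suffices; you induct on $m = \len(M_\nn)$, which strictly drops only in the even step and is preserved under the odd step, hence your stratified ordering ``even at level $m$ before odd at level $m$.''

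The difference is not purely cosmetic. In the paper's even step, the descent $\rho(\nn\fl) = \rho(\nn) - 1$ requires choosing $\fl$ so that $\loc_\fl(\Sel_{\cF(\nn)}(K, T/\mm T)) \neq 0$ (equivalently, $\loc_\fl$ does not kill $\Sel_{\cF(\nn)}(K,T)[\mm]$), whereas Theorem \ref{order} applied to a torsion class $c$ of order $s = \exp(M_\nn)$ only guarantees $\ord(\loc_\fl(c)) \geq s - \varepsilon_0 > 0$. If $\varepsilon_0 > 0$ this does not by itself force $\pi^{s-1}\loc_\fl(c) \neq 0$, i.e. it gives $a = \len(\loc_\fl(\Sel_{\cF(\nn)})) > 0$ but not obviously $\loc_\fl(\Sel_{\cF(\nn)})[\mm]$ hit. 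Your argument only needs $a > 0$ (so $\len(M_{\nn\fl}) = m - a < m$ via Proposition \ref{length+-}), which is exactly what Theorem \ref{order} delivers, and then the reciprocity computation proceeds identically. So your reorganization buys a cleaner invocation of the Chebotarev substitute; the cost is the two-stage ordering inside each length stratum, which you handle correctly (the even case at level $m$ only calls the odd case at levels $< m$, and the odd case at level $m$ only calls the even case at the same level $m$, already established).

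One small point of care, which you do address implicitly: the pigeonhole $\exp(M_\nn) \geq 2m/\rho(\nn) \geq 2m/\rho_0$ requires relating $\rho(\nn)$ to the number of cyclic summands of $M_\nn$; this uses $\rho(\nn) = e_\nn + 2\,\dim_{A/\mm}(M_\nn/\mm M_\nn)$ via Lemma \ref{sel[m]}, and the bound $\rho(\nn) \leq \rho_0$ then gives exactly $\exp(M_\nn) > \varepsilon_0$ whenever $m > \varepsilon = \rho_0\varepsilon_0$, which is the threshold in the statement.
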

\begin{proof}  We prove this by induction on $\rho(\nn)$. If $\nn$ is even and $\rho(\nn)=0$, then $\len(M_\nn)=0$, so $M_\nn=0$. Hence $\stub_\nn=A$. Similarly if $\rho(\nn)=1$ and $\nn$ is odd, then again we have $M_\nn=0$. So the result follows. We now assume that $\rho(\nn)\geq 2$, and so $\len(M_\nn)\neq 0$.

First suppose that $\operatorname{exp}(M_\nn)\leq \varepsilon_0$. Then by the structure of $M_\nn$ as an $A$-module, we have, $\len(M_\nn)\leq {\rho_0}.\varepsilon_0$.

Now let us assume that $\operatorname{exp}(M_\nn)>\varepsilon_0$, and $\nn \in {\cN}^{even}$. Then $\Sel_{\cF(\nn)}(K,T)$ contains an element, say $c$, such that $\ord(\loc_\fl(c))>\ord(c)-\varepsilon_0>0$ for some $\fl\in \cL$, fix any $\fl \in \cL$ prime to $\nn$ such that $\loc_{\fl}(\Sel_{\cF(\nn)}(K,T/\mm T))\neq 0$. Also we suppose that $\lambda_\nn \neq 0$. By Proposition \ref{selmer-facts}\ref{dim} and the induction hypothesis, $\pi^\varepsilon\kappa_{\nn\fl} \in \stub_{\nn\fl}$, and so we have 
\begin{eqnarray*}
    \len(M_\nn)&=&\len(M_{\nn\fl})+a\\
              &\leq& \ind(\kappa_{\nn\fl},\Sel_{\cF(\nn\fl)})+a+{\rho_0}\varepsilon_0\\
              &\leq& \ind(\loc_\fl(\kappa_{\nn\fl}),\loc_\fl(\Sel_{\cF(\nn\fl)}))+a+{\rho_0}\varepsilon_0
\end{eqnarray*}
where $a$ is as in the Proposition \ref{selmer-facts}\ref{length}. 
By the first reciprocity law we have
\begin{eqnarray*}
    \ind(\lambda_\nn,A)&=& \ind(\loc_\fl(\kappa_{\nn\fl}),H^1_{ord}(K_\fl,T))\\
                      &=&  \ind(\loc_\fl(\kappa_{\nn\fl}),\loc_\fl(\Sel_{\cF(\nn\fl)}))+\len(H^1_{ord}(K_\fl,T)/\loc_{\fl}(\Sel_{\cF(\nn\fl)}))\\
                      &=& \ind(\loc_\fl(\kappa_{\nn\fl}),\loc_\fl(\Sel_{\cF(\nn\fl)})+\len(A)-b.
\end{eqnarray*}
Since by Proposition \ref{selmer-facts}\ref{length}, $a+b=\len(A)$, so $\len(M_\nn)\leq \ind(\lambda_\nn,A)+ \varepsilon$.

Now let us assume that $\exp(M_\nn)>\varepsilon_0$ and $\nn$ is odd. Let $C_\nn$ be a free $A$-module in $\Sel_{\cF(\nn)}$. Consider a prime $\fl \in \cL$ prime to $\nn$ such that $\loc_\fl$ takes $C_\nn$ isomorphically into $H^1_{unr}(K_\fl,T)$ ( using Lemma \ref{key-lemma}). By Proposition \ref{selmer-facts}\ref{dim} we have $\rho(\nn\fl)=\rho(\nn)-1<\rho(\nn)$. So, by induction hypothesis we have 
$\len(M_{\nn\fl}) \leq \ind(\lambda_{\nn\fl},A)+\varepsilon$.
By Proposition \ref{length+-} and second reciprocity law we have
\begin{eqnarray*}
\len(M_\nn)=\len(M_{\nn\fl}) &\leq& \ind(\lambda_{\nn\fl},A)+\varepsilon\\
           &=& \ind(\loc_\fl(\kappa_\nn),H^1_{unr}(K_\fl,T))+\varepsilon\\
           &=& \ind(\kappa_\nn,\Sel_{\cF(\nn)})+\varepsilon.
\end{eqnarray*}
\end{proof}
\begin{remark}
\begin{enumerate}
\item A proof of the above length bound is obtained in \cite[Theorem 2.3.7]{hbes}, assuming that $T/\mm T$ is an irreducible representation of $G_K$. There the error term $\varepsilon$ does not occur. The above theorem subsumes the result of Howard by taking $\varepsilon=0$ if $T/\mm T$ is irreducible.
    \item The above theorem can be compared with \cite[Theorem 3.2.1]{cgls}. An error term is also present there.
\end{enumerate}
\end{remark}
\section{Stub Modules}\label{section-stub}
We briefly recall some facts about Stub modules from \cite{hbes}. As in the previous section, we denote the twist $T_\alpha$ by $T$, and retain the notations in the previous section. Our definition of a \emph{core} vertex ( Def \ref{def-core}) is different from that of Howard or Mazur-Rubin's in \cite{mr} and we call them absolute core vertices.
Let $\cX:=(\mathcal{V}, \mathcal{E})$ be a graph with the set of vertices $\mathcal{V}:=\{v(\nn)\mid\nn\in\cN\}$ and the set of edges $\mathcal{E}:=\{e(\nn,\nn\fl)\mid \fl\in\cL\}$.
A vertex $v(\nn)$ is called even ( resp. odd) if $\nn\in\cN^{even}$ (resp. $\nn\in\cN^{odd}$). We often write $\nn$ is an even or odd vertex accordingly as $v(\nn)$ is even or odd vertex. Attached to this graph is an \emph{Euler System Sheaf of $A$-modules}, which is defined for a vertex $v=v(\nn)$ and an edge $e=e(\nn,\nn\fl)$ as:
\begin{eqnarray*}
    \ES(v)= \begin{cases} \Sel_{\mathcal{F(\nn)}}  &\mbox{ if } \nn \in \mathcal{N}^{odd}\\
    A &\mbox{ if } \nn \in \mathcal{N}^{even}
    \end{cases}
    \quad\mbox{ and }\quad
    \ES(e)= \begin{cases} 
                        H^1_{unr}(K_{\fl},T) &\mbox{ if } \nn \in \mathcal{N}^{odd}\\
                        H^1_{ord}(K_{\fl},T) &\mbox{ if } \nn \in \mathcal{N}^{even}.
            \end{cases}
\end{eqnarray*}
If $e=e(\nn,\nn\fl)$ is an edge with end point $v$, we recall the \emph{vertex-to-edge} map
\begin{equation*}
    {\psi}^e_v: \ES(v){\lra} \ES(e)
\end{equation*}
defined as follows. If $v$ is odd then
\begin{equation*}
    {\psi}^e_v= \loc_{\fl} :\begin{cases} \Sel_{\mathcal{F}(\nn)}{\lra}H^1_{unr}(K_{\fl},T) &\mbox{ if } v=v(\nn)\\
    \Sel_{\mathcal{F}(\nn\fl)}{\lra}H^1_{ord}(K_{\fl},T) &\mbox{ if } v=v(\nn\fl).
    \end{cases}
\end{equation*}
If $v$ is even then we fix, using equation \eqref{decompo}, an isomorphism
\begin{equation*}
    {\psi}^e_v:A{\cong} \begin{cases} H^1_{unr}(K_{\fl},T) &\mbox{ if } v=v(\nn\fl)\\
    H^1_{ord}(K_{\fl},T) &\mbox{ if } v=v(\nn).
    \end{cases}
\end{equation*}
Here we fix a choice of this isomorphism for each edge $e$ and even vertex
$v$. 

Over each vertex $v=v(\nn)$ and edge $e=e(\nn,\nn\fl)$, the \emph{stub sheaf}
$\stub(\cX)$ is defined, as follows
\begin{equation*}
    \stub(v):=\stub_{\nn}{\subset}\ES(v) \quad \mbox{ and } \quad
    \stub(e):= \begin{cases} \loc_\fl(\stub_{\nn}) &\mbox{ if } \nn \in \mathcal{N}^{odd}\\
    \loc_\fl(\stub_{\nn\fl}) &\mbox{ if } \nn \in \mathcal{N}^{even}.
    \end{cases}
\end{equation*}
If $v$ is an even vertex, and $v'$ odd with edge $e=e(v,v')$, then the vertex-to-edge map 
${\psi}^e_{v'}:\stub(v'){\lra}\stub(e)$ is surjective. By Corollary \ref{loc stub}, it can be seen that the map ${\psi}^e_v$ gives an isomorphism 
$\stub(v){\cong}\stub(e)$.

\begin{definition} \label{def-core}
\begin{enumerate}
\item Let  $u=\min\{\len(M_\nn)|\nn \in \cN\}$. Then a vertex $v$ of $\cX$ is called an \emph{absolute core vertex} if $\stub(v){\cong}{\mm}^uA$.

\item The \emph{absolute core subgraph} $\cX_{\operatorname{abs}} \subset \cX$ is the graph whose vertices are the absolute core vertices of $\cX$, with two vertices in $\cX_{\operatorname{abs}}$ connected by an edge in $\cX_{\operatorname{abs}}$ if and only if they are connected by an edge in $\cX$. We let $\stub(\cX_{\operatorname{abs}})$ be the restriction of $\stub(\cX)$ to $\cX_{\operatorname{abs}}$.
\end{enumerate}
\end{definition}
\begin{remark}
    In the special case when $T/\mm T$ irreducible, then by \cite[Lemma 2.4.9]{hbes}, the minimal length $u=0$, and we recover the definition of a \emph{core} vertex in \cite[Def 2.4.2]{hbes} and \cite[Def 4.1.8]{mr}. In other words, in the irreducible case \emph{absolute core} vertices are those vertices $\nn \in \cN$ such that $\Stub_\nn \cong A$ (see \cite{hbes}).
\end{remark}
\begin{definition}\label{def-path}
\begin{enumerate}
\item
A \emph{path} from a vertex $v$ to a vertex $v'$ in $\cX$ is a finite sequence of vertices $v=v_0,v_1,...,v_r=v'$ such that $v_i$ is connected to $v_{i+1}$ by an edge $e_i$. A path is surjective (for the locally cyclic sheaf $\stub(\cX)$) if the vertex-to-edge map
\begin{equation*}
    {\psi}^{e_i}_{v_{i+1}}: \stub(v_{i+1}){\lra} \stub(e_i)
\end{equation*}
is an \emph{ isomorphism} for every $i$. We define a \emph{path} in $\cX_{\operatorname{abs}}$ in the same way.
\item\label{def-surjective}
The graph is said to be \emph{path connected} if for any two vertices in the graph, there exists a \emph{surjective} path between them.
By \cite[Lemma 2.4.8]{hbes}, a \emph{path} $v_0,.....,v_r$ in $\cX$ is \emph{surjective} if and only if for every $i$
\begin{equation*}
    \len(\stub(v_{i+1})) \leq \len(\stub(v_i)).
\end{equation*}
\end{enumerate}
\end{definition} 
\begin{definition}\label{universally-trivial}
\begin{enumerate}
   \item 
Let $\nn\in{\cN}^{even}$, then we say that the vertex $v(\nn)$ is \emph{universally trivial} if
\begin{align*}
&\loc_\fl(\Sel_{\cF(\nn)})=0 \,\forall\, \fl\nmid \nn,\\
&\loc_{\fl_1}(\Sel_{\cF(\nn\fl)})\cong A \mbox{ for some } \fl_1\nmid\nn\fl\implies
\loc_{\fl_2}(\Sel_{\cF(\nn\fl\fl_1)})=0\,\forall\, \fl_2\nmid \nn, \\
&\cdots\quad \cdots\quad \cdots\quad,\\
&\loc_{\fl_{2k-1}}(\Sel_{\cF(\nn\fl\fl_1...\fl_{2k-2})})\cong A \mbox{ for some } \fl_{2k-1}\nmid\nn\fl\cdots\fl_{2k-2}\implies
\loc_{\fl_{2k}}(\Sel_{\cF(\nn\fl\fl_1...\fl_{2k-1})})=0 \,\forall\,\fl_{2k}\nmid \nn,\\
&\cdots\quad \cdots\quad \cdots\quad\\
&\mbox{and the process continues indefinitely.}
\end{align*}

\item 
Let $\nn\in {\cN}^{odd}$, then we say that the vertex $v(\nn)$ is 
\emph{ universally trivial } if 
\begin{align*}
&\loc_{\fl_1}(\Sel_{\cF(\nn)})\cong A \mbox{ for some }  \fl_1 \nmid \nn \implies
\loc_{\fl_2}(\Sel_{\cF(\nn\fl_1)})=0 \,\forall\, \fl_2\nmid \nn,\\
&\loc_{\fl_3}(\Sel_{\cF(\nn\fl_1\fl_2)})\cong A \mbox{ for some } \fl_3 \nmid \nn \implies
\loc_{\fl_4}(\Sel_{\cF(\nn\fl_1\fl_2\fl_3)})=0 \,\forall\, \fl_4\nmid \nn,\\
&\cdots\quad \cdots\quad \cdots\,\\
&\loc_{\fl_{2r-1}}(\Sel_{\cF(\nn\fl_1...\fl_{2r-2})})\cong A \mbox{ for some } \fl_{2r-1} \nmid \nn, \implies
\loc_{\fl_{2r}}(\Sel_{\cF(\nn\fl_1...\fl_{2r-1})})=0 \,\forall\, \fl_{2r} \nmid \nn\\
&\cdots\quad \cdots\quad \cdots\quad
\end{align*}
and the process continues indefinitely.
\end{enumerate}
\end{definition}
\begin{remark}
By Lemma \ref{key-lemma}, for any \emph{odd} vertex $v(\fa)$, there exists infinitely many $\fl\in\cL$ such that $\loc_\fl(\Sel_{\cF(\fa)})\cong A$, so the above process continues indefinitely and both the notions are indeed well-defined.
\end{remark}
\begin{prop}\label{universal}
\begin{enumerate}
    \item If $\len(M_\nn)=u$, then $\nn$ is universally trivial.
    \item Let $\nn$ and $\fa$ be any two vertices in $\cN$ which are universally trivial. Then 
\begin{equation*}
\len(M_{\nn})= \len(M_\fa).
\end{equation*}
\end{enumerate}
\end{prop}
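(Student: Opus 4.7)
For part (i), the plan is an induction on the depth of the nested universally trivial conditions, using Proposition~\ref{length+-} and Lemma~\ref{key-lemma} as the two essential tools. Suppose $\nn \in \cN^{even}$ with $\len(M_\nn) = u$ (the case $\nn \in \cN^{odd}$ is symmetric). For any $\fl \nmid \nn$, Proposition~\ref{length+-} gives $\len(M_{\nn\fl}) = u - a$ with $a = \len(\loc_\fl(\Sel_{\cF(\nn)}))$, and minimality of $u$ forces $a = 0$, verifying the first universally trivial condition. Now $\nn\fl \in \cN^{odd}$ has $\len(M_{\nn\fl}) = u$, and since $e(\nn\fl) = 1$ furnishes a free rank-one $A$-summand of $\Sel_{\cF(\nn\fl)}$, Lemma~\ref{key-lemma} produces $\fl_1 \nmid \nn\fl$ with $\loc_{\fl_1}(\Sel_{\cF(\nn\fl)}) \cong A$; this forces $b = 0$ in Proposition~\ref{length+-}, giving $\len(M_{\nn\fl\fl_1}) = u$. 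Applying the first-level argument to the even, minimum-length vertex $\nn\fl\fl_1$ yields the second universally trivial condition for $\nn$, and iteration covers all the remaining nested conditions.

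For part (ii), my plan proceeds in two stages. Stage~1 is a forward propagation statement: if $\nn \in \cN^{even}$ is universally trivial, then for every $\fl \nmid \nn$ the vertex $\nn\fl$ is also universally trivial with $\len(M_{\nn\fl}) = \len(M_\nn)$; if $\nn \in \cN^{odd}$ is universally trivial and $\loc_\fl(\Sel_{\cF(\nn)}) \cong A$, then $\nn\fl$ is universally trivial with equal length. Length equality comes from Proposition~\ref{length+-} since $a = 0$ in the even case and $b = 0$ in the odd case. The propagation of universally triviality unpacks directly: each nested condition for $\nn\fl$ is exactly a nested condition one level deeper for $\nn$, after noticing that the quantifier $\fl_j \nmid \nn\fl$ is stronger than $\fl_j \nmid \nn$.

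Stage~2 is the conclusion. For universally trivial vertices $\nn$ and $\fa$, I would construct a common descendant $\fc \in \cN$ reachable from both $\nn$ and $\fa$ by iterated Stage~1 steps; by Stage~1, $\len(M_\fc)$ equals both $\len(M_\nn)$ and $\len(M_\fa)$, giving the desired equality. The construction interleaves primes of $\fa/\gcd(\nn,\fa)$, added freely at the even-to-odd steps of the propagation from $\nn$, with auxiliary detour primes supplying the required $\loc$-isomorphism at the odd-to-even steps, and analogously from $\fa$ using the primes of $\nn/\gcd(\nn,\fa)$.

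The hard part is Stage~2: arranging that the detour primes chosen along the propagation from $\nn$ yield the same terminal vertex $\fc$ as those chosen along the propagation from $\fa$. The key resource is the infinite supply of $\loc$-isomorphism primes guaranteed by Lemma~\ref{key-lemma} at every odd-to-even step, which should provide enough flexibility to match the two propagations; making this combinatorial selection rigorous is where the bulk of the work will lie.
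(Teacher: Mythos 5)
Your proof of part (i) is essentially the paper's argument, just phrased as direct verification rather than by contradiction: minimality of $u$ plus Proposition~\ref{length+-} forces $a=0$ at each even-to-odd step, and Lemma~\ref{key-lemma} handles the odd-to-even steps. Your Stage~1 of part (ii) (forward propagation preserves universal triviality and length, with the quantifier observation $\fl_j\nmid\nn\fl\Rightarrow\fl_j\nmid\nn$) is correct and plays the role of the intermediate lemma in the paper's proof, where it is used to produce a universally trivial vertex $\nn\fa\fa'$ above both $\nn$ and $\fa$ with $\len(M_{\nn\fa\fa'})=\len(M_\nn)=u$.

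Your Stage~2, however, has a genuine gap, and it is not merely a matter of making a ``combinatorial selection rigorous.'' The plan to propagate \emph{upward} from both $\nn$ and $\fa$ and force the two trajectories to meet at a common $\fc$ fails at the odd-to-even steps on the $\fa$ side: at an odd vertex $\fa\mm$ (with $\mm\mid\nn\fa'$) you may add only a prime $\fq$ with $\loc_\fq(\Sel_{\cF(\fa\mm)})\cong A$, but nothing guarantees that any of the remaining primes of $\nn\fa'/\mm$ satisfy this. Lemma~\ref{key-lemma} gives an infinite supply of admissible detour primes, but those are generic, not elements of your target $\fc$, so the propagation from $\fa$ drifts off target and you never reach the same $\fc$. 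The paper avoids this entirely by inverting the direction: once $\fa\nn\fa'$ is constructed (from the minimal-length side only), it performs a \emph{downward} induction on the number of prime factors of $\nn\fa'$, building a surjective path from $\fa$ up to $\fa\nn\fa'$. The downward viewpoint means every intermediate vertex in the chain can be kept at minimal length, so Proposition~\ref{length+-} can be invoked to strip primes off; and when at an odd vertex no prime of $\nn\fa'$ has localization $\cong A$, the paper inserts a temporary detour prime $\fl\nmid\fb$, passes to the even vertex $\fb\fl$, removes two primes, and shows (via Lemma~\ref{mod-m} and a minimality argument) that the detour cannot persist forever. You will need to replace your Stage~2 with this downward induction plus detour-and-remove maneuver, or find a different mechanism that confronts the non-addability obstruction at odd vertices head-on.
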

\begin{proof}
(i) First, let $\nn$ be even, such that $\len(M_\nn)=u$, and $\nn$ is not universally trivial. Then, there are primes $\fl, \fl_1,\cdots,\fl_{2k}$ for some $k\in\N$, such that 
\begin{eqnarray*}
&\loc_\fl(\Sel_{\cF(\nn)})=0 \quad \forall \quad \fl\nmid \nn\\
&\mbox{ if } \fl_1\nmid\nn \mbox{ with }\loc_{\fl_1}(\Sel_{\cF(\nn\fl)})\cong A 
\mbox{ then } \loc_{\fl_2}(\Sel_{\cF(\nn\fl\fl_1)})=0 \quad\forall\quad  \fl_2\nmid \nn \\
&\cdots\quad\cdots\quad\cdots\quad\cdots\\
&\fl_{2k-1}\nmid\nn\fl\fl_1...\fl_{2k-2} \mbox{ with } \loc_{\fl_{2k-1}}(\Sel_{\cF(\nn\fl\fl_1...\fl_{2k-2})})\cong A \mbox{ but}
\loc_{\fl_{2k}}(\Sel_{\cF(\nn\fl\fl_1...\fl_{2k-1})})\neq0 \mbox{ with  }\fl_{2k}\nmid \nn.
\end{eqnarray*}
Then it is easy to see from Proposition \ref{length+-} that $\len(M_\nn)=\len(M_{\nn\fl})=...=\len(M_{\nn\fl\fl_1...\fl_{2k-1}})$, and 
\begin{equation*}
    \len(M_{\nn\fl...\fl_{2k-1}})=\len(M_{\nn\fl...\fl_{2k}})+a_{2k}
\end{equation*}
where $a_{2k}$ is as in the Proposition \ref{length+-} and $a_{2k}\neq 0$ (by the above assumption).
This contradicts that $u$ is minimal. Hence $\nn$ is universally trivial.

When $\nn$ is odd the proof is similar to the above one. Indeed, let us consider $\nn$ is odd, $\len(M_\nn)=u$, but not universally trivial. Then there exists primes $\fl_1,...,\fl_{r}$ for some $r\in \N$, such that
\begin{eqnarray*}
\loc_{\fl_1}(\Sel_{\cF(\nn)})\cong A, \loc_{\fl_2}(\Sel_{\cF(\nn{\fl_1})})=0,..., \loc_{\fl_{2r-1}}(\Sel_{\cF(\nn\fl_1\fl_2...\fl_{2r-2})})\cong A 
\end{eqnarray*}
but
\begin{equation*}
\loc_{\fl_{2r}}(\Sel_{\cF(\nn\fl_1\fl_2...\fl_{2r-1})})\neq 0.
\end{equation*}
By Proposition \ref{length+-}, we have
\begin{equation*}
    \len(M_\nn)=\len(M_{\nn\fl_1})=...=\len(M_{\nn\fl_1...\fl_{2r-1}})
\end{equation*}
and 
\begin{equation*}
    \len(M_{\nn\fl_1...\fl_{2r-1}})= \len(M_{\nn\fl_1...\fl_{2r}})+a_{2r}
\end{equation*}
where $a_{2r}$ is a non zero positive integer. This contradicts that $\len(M_\nn)=u$ is minimal. This completes the proof of the first part of the Proposition.

Before we proceed to prove the second part of this Proposition, we prove the lemma below.
\begin{lemma}
Let $\fa=\fl_1\fl_2...\fl_s$ be a universally trivial vertex. Let $\nn$ be any vertex such that $\len(M_\nn)=u$. Then there exists a universally trivial vertex $\nn\fa\fa'$ such that $\len(M_\nn)=\len(M_{\nn\fa\fa'})$.
\end{lemma}
\begin{proof}
\emph{Case-I:} Let $\nn$ be \emph{even}. Then by Proposition \ref{length+-}, $\len(M_\nn)=\len(M_{\nn\fl_1})$ and $\nn\fl_1$ is odd and universally trivial. By Lemma \ref{key-lemma} there exists a prime $\fl_1'$ such that $\len(M_{\nn\fl_1})=\len(M_{\nn\fl_1\fl_1'})$ (Proposition \ref{length+-}) and $\nn\fl_1\fl_1'$ is even and universally trivial. Proceeding like this and adding primes $\fl_2, \fl_2',\cdots,\fl_s$, we get a universally trivial vertex $\nn\fa\fa'$ such that $\len(M_\nn)=\len(M_{\nn\fa\fa'})$ for some $\fa'$.

\emph{Case-II:} Let $\nn$ be \emph{odd}. Then by Lemma \ref{key-lemma}, there exists a prime $\fl_1'$ such that $\len(M_\nn)=\len(M_{\nn\fl_1'})$ ( by Proposition \ref{length+-}). Here as $\nn\fl_1'$ is even, universally trivial, by using Proposition \ref{length+-} again, we have $\len(M_{\nn\fl_1'})=\len(M_{\nn\fl_1'\fl_1})$. Continuing like this we get $\len(M_\nn)=\len(M_{\nn\fa\fa'})$ for some $\fa' \in \cN$ as $\nn\fl_1\fl_1'$ is again a universally trivial vertex.
\end{proof}
(ii) We now continue with the proof of Proposition \ref{universal}. Recall that $M_\nn$ is of minimal length $u$, hence $M_{\nn\fa\fa'}$ is minimal by the previous lemma.
We show the second part of the proposition by induction on the number of prime factors of $\nn\fa'$ by constructing a \emph{surjective} path from $\fa$ to $\fa\nn\fa'$ such that $\len(M_{\nn\fa\fa'})=\len(M_\fa)$. Let the number of prime factors of $\nn\fa'$ be greater than one.

\emph{Case-Even:} Suppose $\fa\nn\fa'$ is \emph{even}. Then by Proposition \ref{length+-}, $\len(M_{\fa\nn\fa'/\fl})=\len(M_{\fa\nn\fa'})-b$. Since $\len(M_{\nn\fa\fa'})$ is minimal, $b=0$, so $\fa\nn\fa'/\fl$ is universally trivial, by part (i). By induction hypothesis there is a surjective path from $\fa$ to $\fa\nn\fa'/\fl$ such that $\len(M_\fa)=\len(M_{\fa\nn\fa'/\fl})$. 
Therefore $\len(M_\fa)=\len(M_{\nn\fa\fa'})$.

\emph{Case-Odd:} Suppose $\fb=\fa\nn\fa'$ is \emph{odd}. If there exists $\fq|\nn\fa'$ such that $\loc_\fq(\Sel_{\cF(\fb)})\cong A$, then by Proposition \ref{length+-} we have, $\len(M_{\fb/\fq})=\len(M_{\fb})$ and by induction hypothesis the result follows.

Let $\loc_\fq(\Sel_{\cF(\fb)})\ncong A$ for all $\fq|\nn\fa'$.
Let $\fl\nmid \fb$ be such that $\loc_\fl(\Sel_{\cF(\fb)})\cong A$. By Proposition \ref{length+-}, we have $\len(M_\fb)=\len(M_{\fb\fl})=u$. Since $\fb\fl$ is even, by minimality, we have $\len(M_{\fb\fl/\fq})=u$ for any $\fq|\nn\fa'$. It can be seen that $v(\fb), v(\fb\fl), v(\fb\fl/\fq)$ is a surjective path such that $\len(M_\fb)=\len(M_{\fb\fl})=\len(M_{\fb\fl/\fq})$. Again if we have one such prime $\tau \mid {(\fb\fl/\fq)}$ such that $\loc_\tau(\Sel_{\cF(\fb\fl/\fq)}(K,T)\cong A$, then by induction hypothesis we are done. Now suppose $\loc_\tau(\Sel_{\cF(\fb\fl/\fq)})\ncong A$ for all $\tau|(\fb\fl/\fq)$. Then, writing $\fb'=\fb\fl/\fq$, we have $\loc_\tau(\Sel_{\cF(\fb\fl/\fq)})=0$ for all $\tau|\fb'$ ( by Lemma \ref{mod-m}). Then
\begin{equation*}
    \Sel_{\cF(\fb')}=\Sel_{\cF_{\nn\fa'\fl/\fq}(\fa)}\subset \Sel_{\cF_{\nn\fa'/\fq}(\fa)}=\Sel_{\cF(\fb)}.
\end{equation*}
Since the length of $\Sel_{\cF(\fb)}$ on the right is minimal, so equality holds everywhere, and this contradicts $\loc_\fl(\Sel_{\cF(\fb)})\cong A$. Therefore, there exists 
$\tau$ such that $\loc_\tau(\Sel_{\cF(\fb')})\cong A$. Hence $\len(M_{\fb'})=\len(M_{\fb'/\tau})$, and  
$v(\fb),v(\fb\fl), v(\fb\fl/\fq), v(\fb\fl/{\fq\tau})$ is a surjective path such that
\begin{equation*}
\len(M_\fb)=\len(M_{\fb\fl})=\len(M_{\fb\fl/\fq})=\len(M_{\fb\fl/{\fq\tau}}).
\end{equation*}
By induction hypothesis the result follows.
\end{proof}

\begin{remark}
Note that Proposition \ref{universal} says that $\nn$ is an \emph{absolute core} vertex if and only if it is universally trivial.
\end{remark}
\begin{lemma}\label{core}
Let $v$ be any vertex of $\cX$. Then there is a absolute core vertex $v_0$ and a surjective path in $\cX$ from $v_0$ to $v$. Moreover, for any $\nn \in \mathcal{N}$ there is a vertex ${\nn}' \in \mathcal{N}$ with $\nn | {\nn}'$ such that $v({\nn}')$ is a absolute core vertex. This ${\nn}'$ may be chosen either in $\mathcal{N}^{even}$ or in $\mathcal{N}^{odd}$.
\end{lemma}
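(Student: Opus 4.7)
The plan is to prove the second (existence of an absolute-core multiple $\nn'$ of $\nn$, of either prescribed parity) and then deduce the first by reversing the resulting chain of prime extensions. I use throughout that, by Proposition \ref{universal} and the remark identifying absolute core with universally trivial vertices, $v(\fa)$ is absolute core if and only if $\len(M_\fa) = u$.

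For the second assertion I would induct on $\delta(\nn) := \len(M_\nn) - u \geq 0$. When $\delta(\nn) = 0$, the vertex $v(\nn)$ is already absolute core, and parity can be switched by a single prime: if $\nn$ is odd, Lemma \ref{key-lemma} (applied to the free summand $A \subset \Sel_{\cF(\nn)}$ provided by $e_\nn = 1$) yields $\fl \nmid \nn$ with $\loc_\fl(\Sel_{\cF(\nn)}) \cong A$, so $a = k$ and $b = 0$ in Proposition \ref{selmer-facts}(ii) and Proposition \ref{length+-} gives $\len(M_{\nn\fl}) = u$, producing an even absolute core $v(\nn\fl)$; if $\nn$ is even, any prime $\fl \nmid \nn$ gives $\len(M_{\nn\fl}) = u - a \leq u$, which forces $a = 0$ by minimality and makes $v(\nn\fl)$ an odd absolute core.

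For $\delta(\nn) > 0$, the vertex is not universally trivial, and the failure of the definition supplies a finite witness chain of primes $\fl_1, \fl_2, \ldots, \fl_s$ in which odd-indexed primes satisfy $\loc_{\fl_{2i+1}}(\Sel_{\cF(\nn\fl_1\cdots\fl_{2i})}) \cong A$ (such $\fl_{2i+1}$ always exist at odd vertices by Lemma \ref{key-lemma}), intermediate even-indexed primes satisfy $\loc_{\fl_{2i}}(\Sel_{\cF(\nn\fl_1\cdots\fl_{2i-1})}) = 0$, and the terminal prime $\fl_s$ finally has $\loc_{\fl_s}(\Sel_{\cF(\nn\fl_1\cdots\fl_{s-1})}) \neq 0$. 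Along this chain, Proposition \ref{length+-} keeps $\len(M_\cdot)$ constant (alternating $a = k, b = 0$ at odd-$\to$-even steps and $a = 0$ at even-$\to$-odd steps) and yields a strict decrease at the last step (where $a > 0$). The inductive hypothesis applied to $\nn\fl_1\cdots\fl_s$, followed by a parity toggle as in the base case, produces $\nn'$ of the prescribed parity. For the first assertion, write the resulting chain as $\nn = \nn_0, \nn_1, \ldots, \nn_r = \nn'$ with $\len(M_{\nn_i})$ non-increasing and reverse it to a path $v(\nn') = v_0, v_1, \ldots, v_r = v(\nn)$ along which $\len(M_{v_i})$ is non-decreasing; since $\len(\stub(v)) = \max(k - \len(M_v), 0)$ is a monotone decreasing function of $\len(M_v)$, this reversed path is surjective by the criterion of Definition \ref{def-path}(ii).

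The main obstacle is the inductive step when $\nn$ is odd and non-core: no single prime can strictly decrease $\len(M_\nn)$, since Proposition \ref{length+-} forces every edge out of an odd vertex to weakly increase it. Escaping this requires the detour to an even neighbor of equal length via Lemma \ref{key-lemma}, and then careful use of non-universal triviality to argue that the odd-even alternation cannot persist forever at the same length without contradicting $\nn$ being non-universally trivial, so that a strict decrease must occur in finitely many steps. Parsing the nested quantifier structure of Definition \ref{universally-trivial} to extract the correct finite witness sequence is the delicate bookkeeping at the heart of the argument.
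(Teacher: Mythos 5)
Your proof is essentially the same as the paper's and is correct in its key ideas: extract a finite witness chain from the failure of universal triviality (via the equivalence of Proposition \ref{universal}), track $\len(M_\cdot)$ along the chain using Proposition \ref{length+-} to find a strictly smaller length at the end, pad with a Lemma \ref{key-lemma} prime when starting at an odd vertex, and then reverse the chain to produce the surjective path (using $\len(\stub(v)) = \max(k - \len(M_v),0)$ and \cite[Lemma 2.4.8]{hbes}). The paper organizes this as a forward construction $w_0 = v, w_1, w_2,\ldots$ that terminates because $\len(\Stub(w_i))$ strictly increases at each even non-core vertex and is bounded by $\len(A)$, while you phrase it as an induction on $\delta(\nn) = \len(M_\nn) - u$; these are the same argument in different clothes. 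One small imprecision: your witness-chain indexing (odd-indexed $\fl_{2i+1}$ with $\loc \cong A$, even-indexed with $\loc = 0$) is written for $\nn$ odd; for $\nn$ even, the chain in Definition \ref{universally-trivial} begins with a prime having $\loc_\fl(\Sel_{\cF(\nn)}) = 0$ (or, in the degenerate case, $\neq 0$ giving an immediate strict decrease), so the indexing shifts by one — but this does not affect the conclusion, and the paper's own write-up has the analogous informality. Also note that the inductive hypothesis already delivers $\nn'$ of prescribed parity, so the final "parity toggle" in your inductive step is harmless but redundant.
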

\begin{proof}
We construct a sequence of vertices $w_i=w(\nn_i)$ inductively, where $w_0=v$.
If $w_i=w(\nn_i)$ is absolute core then we are done. So let us first assume that $w_i$ is even and not absolute core. Then by the definition of absolute core vertex, there exists primes $\fl_1,\fl_2,\cdots, \fl_{2k}$ such that 
\begin{flalign*}
\loc_\fl(\Sel_{\cF(\nn_i)})=0&\quad\forall\quad\fl\nmid \nn_i,\\
\loc_{\fl_1}(\Sel_{\cF(\nn_i\fl)})\cong A, &\mbox{ and }
\loc_{\fl_2}(\Sel_{\cF(\nn_i\fl\fl_1)})=0 
\mbox{ with }\quad\fl_2\nmid \nn_i \\
\cdots\quad\cdots\quad\cdots&\quad\cdots\quad\cdots\\
\loc_{\fl_{2k-1}}(\Sel_{\cF(\nn_i\fl\fl_1...\fl_{2k-2})})&
\cong A, 
\mbox{ but }
\loc_{\fl_{2k}}(\Sel_{\cF(\nn_i\fl\fl_1...\fl_{2k-1})})\neq0 \mbox{ with } \fl_{2k}\nmid \nn_i.
\end{flalign*}
By Proposition \ref{length+-} we have $\len(M_{\nn_i})=\len(M_{\nn_i\fl\fl_1})=...=\len(M_{\nn_i\fl\fl_1...\fl_{2k-1}})$.
Since $\loc_{\fl_{2k}}(\Sel_{\cF(\nn_i\fl\fl_1...\fl_{2k-1})})\neq0$ for $\fl_{2k}\nmid \nn_i$, we have 
\begin{equation*}
    \len(M_{\nn_i\fl\fl_1...\fl_{2k-1}})=\len(M_{\nn_i\fl\fl_1...\fl_{2k}})+a
\end{equation*}
where $a>0$. Setting $w_{i+1}=w({\nn_i}\fl\fl_1...\fl_{2k})$, we get 
\begin{equation*}
    \len(\Stub(w_i))<\len(\Stub(w_{i+1})).
\end{equation*}
If $w_i=w(\nn_i)$ is odd, using Lemma \ref{key-lemma} choose $\fl \in \cL$ prime to $\nn_i$ such that localization at $\fl$ takes a free rank one submodule of $\Sel_{\cF(\nn_i)}$ isomorphically onto $H^1_{unr}(K_{\fl},T)$, and set $w_{i+1}=w({\nn_i}\fl)$. From the Proposition \ref{selmer-facts}\ref{dim} we have 
\begin{equation*}
    \len(\Stub(w_i))=\len(\Stub(w_{i+1})),
\end{equation*}
since $a=\len(A)$ and $b=0$.
Continuing this way we get a desired path from a absolute core vertex $v_0=w_k$ to any given vertex $v$, and also we have 
\begin{equation*}
    \len(\Stub(w_i)) \leq \len(\Stub(w_{i+1}))
\end{equation*}
for all $i$. By \cite[Lemma 2.4.8]{hbes} the path $w_k,...,w_0$ is a surjective path from $v_0$ to $v$. Clearly, if $v_0$ is absolute core and even (resp. odd), then by the definition of universally trivial, it can be connected to an odd (resp. even) absolute core vertex.
\end{proof}
For any $\fa \in \mathcal{N}$, let $\cX_{\operatorname{abs},\fa}$ be the subgraph of $\cX_{\operatorname{abs}}$ whose vertices consist of those absolute core vertices $v(\nn)$ with $\fa|\nn$. Two vertices are connected by an edge in $\cX_{\operatorname{abs},\fa}$ if and only if they are connected by an edge in $\cX_{\operatorname{abs}}$.
\begin{corollary}\label{path conn}
Let $\len(A)>\varepsilon_0$ and $v(\fa)$ be a absolute core vertex. Then the subgraph $\cX_{\operatorname{abs},\fa}$ is path connected.
\end{corollary}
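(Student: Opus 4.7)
The plan is to show that $\cX_{\operatorname{abs},\fa}$ is graph-theoretically connected through the base vertex $v(\fa)$; the surjectivity of paths then comes for free, because Proposition \ref{universal} gives $\stub(v)\cong \mm^u A$ uniformly at every absolute core vertex, and so any path inside $\cX_{\operatorname{abs}}$ is automatically surjective by \cite[Lemma 2.4.8]{hbes}. Thus it suffices to connect every $v(\nn)\in \cX_{\operatorname{abs},\fa}$ to $v(\fa)$ by a sequence of absolute core vertices all divisible by $\fa$.

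I will induct on $\omega(\nn/\fa)$, the number of prime factors of $\nn$ outside $\fa$. The base case $\nn=\fa$ is trivial. If $\nn$ is even, pick any $\fl\mid \nn/\fa$. Applying Proposition \ref{length+-} to the odd vertex $\nn/\fl$, the minimality $u=\len(M_\nn)$ forces the parameter $b$ to vanish, so $\len(M_{\nn/\fl})=u$ and $\nn/\fl$ is absolute core (and still divisible by $\fa$). By the inductive hypothesis there is a path in $\cX_{\operatorname{abs},\fa}$ from $v(\fa)$ to $v(\nn/\fl)$, which we extend by the edge to $v(\nn)$.

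The nontrivial case is when $\nn$ is odd. If some $\fl\mid \nn/\fa$ satisfies $\loc_\fl(\Sel_{\cF(\nn)})\cong A$, then Lemma \ref{mod-m} combined with Proposition \ref{length+-} again gives $\len(M_{\nn/\fl})=u$ and induction closes the step. Otherwise $\loc_\fl(\Sel_{\cF(\nn)})=0$ for every $\fl\mid \nn/\fa$; in this situation I invoke Lemma \ref{key-lemma} to find $\fq\nmid \nn$ with $\loc_\fq(\Sel_{\cF(\nn)})\cong A$. Then $v(\nn\fq)$ is even absolute core, lies in $\cX_{\operatorname{abs},\fa}$, and by the even case it is joined inside $\cX_{\operatorname{abs},\fa}$ to each of its odd neighbors $v(\nn\fq/\fl_0)$ for $\fl_0\mid \nn/\fa$. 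Writing $\fb=\nn/\fa$, the sibling vertex $\nn'=\fa(\fb/\fl_0)\fq$ has the same $\omega$-count as $\nn$ but different primes; running the odd-case dichotomy at $\nn'$, the good sub-case must occur, for otherwise universal vanishing of all localizations at $\nn'$ would force
\begin{equation*}
\Sel_{\cF(\nn')}=\Sel_{\cF_{(\fb/\fl_0)\fq}(\fa)}\subset \Sel_{\cF_{\fb/\fl_0}(\fa)}=\Sel_{\cF(\nn/\fl_0)},
\end{equation*}
and chasing this through the containments of modified Selmer groups (as in the Case-Odd sub-case of the proof of Proposition \ref{universal}(ii)) yields an equality of Selmer groups that contradicts $\loc_\fq(\Sel_{\cF(\nn)})\cong A$. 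Hence at most one such detour is needed, and induction closes.

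The main obstacle is the detour argument in the last step, namely controlling the ``bad'' odd sub-case when $\loc_\fl(\Sel_{\cF(\nn)})=0$ for every $\fl\mid \nn/\fa$. The mechanism handling it is exactly the modified-Selmer-structure manipulation from the Case-Odd portion of the proof of Proposition \ref{universal}(ii), and restricting from $\cX_{\operatorname{abs}}$ to $\cX_{\operatorname{abs},\fa}$ introduces no genuine complication, because every auxiliary prime $\fq$ and every removed prime $\fl_0$ is chosen outside $\fa$, so each intermediate vertex in the construction remains divisible by $\fa$.
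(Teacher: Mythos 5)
Your overall strategy matches the paper's: the paper proves Corollary \ref{path conn} by ``follows from the proof of Proposition \ref{universal},'' and your induction on $\omega(\nn/\fa)$, with the even step handled by Proposition \ref{length+-} and the odd step handled by the dichotomy on $\loc_\fl(\Sel_{\cF(\nn)})$ for $\fl\mid\nn/\fa$, is exactly the scheme there. The observation that surjectivity of paths is automatic inside $\cX_{\operatorname{abs}}$ because all stubs are $\cong\mm^uA$ is correct and useful.

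However, the containment chain you write in the ``bad'' odd sub-case does not do what you claim, and this is a genuine gap. You write
$\Sel_{\cF(\nn')}=\Sel_{\cF_{(\fb/\fl_0)\fq}(\fa)}\subset \Sel_{\cF_{\fb/\fl_0}(\fa)}=\Sel_{\cF(\nn/\fl_0)}$,
but the last equality is not available: $\Sel_{\cF_{\fb/\fl_0}(\fa)}=\Sel_{\cF(\nn/\fl_0)}$ would require $\loc_\tau(\Sel_{\cF(\nn/\fl_0)})=0$ for every $\tau\mid\fb/\fl_0$, and your hypothesis gives vanishing of localizations for $\Sel_{\cF(\nn)}$, not for $\Sel_{\cF(\nn/\fl_0)}$. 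Worse, even if one grants the chain, it yields no contradiction. Since $\loc_{\fl_0}(\Sel_{\cF(\nn)})=0$, Proposition \ref{length+-} (with $b=0$, $a=\len(A)$) gives $\len(M_{\nn/\fl_0})=u+\len(A)$, so $\nn/\fl_0$ is \emph{not} absolute core; $\Sel_{\cF(\nn/\fl_0)}$ has length $2u+2\len(A)$, strictly larger than $\len(\Sel_{\cF(\nn')})=\len(A)+2u$. A proper inclusion into a larger group is consistent, and in fact the constraint you could extract, $\loc_\fq(\Sel_{\cF(\nn')})\subset H^1_{ord}\cap H^1_{unr}=0$, is already part of the ``universal vanishing at $\nn'$'' hypothesis, so nothing contradicts $\loc_\fq(\Sel_{\cF(\nn)})\cong A$.

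The correct target of the chain, as in the Case-Odd portion of the paper's proof of Proposition \ref{universal}(ii), is $\Sel_{\cF(\nn)}$ rather than $\Sel_{\cF(\nn/\fl_0)}$. With $\fb=\nn/\fa$ and the vanishing hypothesis $\loc_\fl(\Sel_{\cF(\nn)})=0$ for all $\fl\mid\fb$, one has $\Sel_{\cF_{\fb/\fl_0}(\fa)}=\Sel_{\cF(\nn)}$, so the chain becomes $\Sel_{\cF(\nn')}\subset\Sel_{\cF(\nn)}$. Now \emph{both} $\nn$ and $\nn'=\nn\fq/\fl_0$ are odd absolute core, so both Selmer groups have length $\len(A)+2u$, and the inclusion forces equality. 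Since $\fq\mid\nn'$ carries the ordinary condition in $\cF(\nn')$ while $\fq\nmid\nn$ carries the unramified condition in $\cF(\nn)$, the common module has $\loc_\fq$ landing in $H^1_{ord}(K_\fq,T)\cap H^1_{unr}(K_\fq,T)=0$, contradicting $\loc_\fq(\Sel_{\cF(\nn)})\cong A$. That is the contradiction the paper relies on; your written chain replaces the crucial equal-length comparison with a comparison against a strictly larger Selmer group, losing the argument.

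You are correct that, once this is fixed, all auxiliary primes $\fq$ and removed primes $\fl_0$, and hence all intermediate vertices, stay divisible by $\fa$, so the path remains inside $\cX_{\operatorname{abs},\fa}$.
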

\begin{proof}
    Follows from the proof of the Proposition \ref{universal}.
\end{proof}

\begin{prop}\label{path}
The absolute core subgraph $\cX_{\operatorname{abs}}$ is path connected and contains both even and odd vertices. For any vertex $v$ of $\cX$ and any absolute core vertex $v_0$ of $\cX$, there is a surjective path from $v_0$ to $v$.
\end{prop}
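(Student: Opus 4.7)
The statement breaks into three claims: (a) $\cX_{\operatorname{abs}}$ contains both even and odd vertices; (b) $\cX_{\operatorname{abs}}$ is path connected; and (c) for any absolute core vertex $v_0$ and any vertex $v$ of $\cX$, there is a surjective path from $v_0$ to $v$. All three will be obtained by assembling Lemma \ref{core} and Corollary \ref{path conn}; no further Chebotarev or Euler-system input should be needed. Claim (a) is immediate from the last assertion of Lemma \ref{core}, which produces an absolute core vertex above any prescribed $\nn \in \cN$ of either parity.

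For (b), I would take two absolute core vertices $v(\fa_1)$ and $v(\fa_2)$, set $\fa = \operatorname{lcm}(\fa_1,\fa_2) \in \cN$, and apply Lemma \ref{core} to $v(\fa)$ to obtain an absolute core vertex $v(\nn')$ with $\fa \mid \nn'$, hence $\fa_i \mid \nn'$ for both $i$. By Corollary \ref{path conn}, each subgraph $\cX_{\operatorname{abs},\fa_i}$ is path connected, so it contains a surjective path from $v(\fa_i)$ to $v(\nn')$ lying entirely inside $\cX_{\operatorname{abs}}$. Concatenating these two paths yields a path in $\cX_{\operatorname{abs}}$ from $v(\fa_1)$ to $v(\fa_2)$. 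Since every vertex of the concatenation is absolute core, it satisfies $\len(M) = u$ and hence has stub module of length $\len(A) - u$, so the vertex-stub length is constant along the path; by the length criterion for surjectivity in Definition \ref{def-path}, the concatenated path is surjective. The essential point here is that Corollary \ref{path conn} produces a path inside $\cX_{\operatorname{abs}}$, not merely inside $\cX$.

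For (c), I would apply Lemma \ref{core} to $v$ to obtain \emph{some} absolute core vertex $v_0'$ together with a surjective path $\gamma_1$ from $v_0'$ to $v$. Part (b) then supplies a surjective path $\gamma_2$ in $\cX_{\operatorname{abs}}$ from $v_0$ to $v_0'$. Along the concatenation $\gamma_2 \ast \gamma_1$, the stub length is constantly $\len(A) - u$ on $\gamma_2$ and non-increasing on $\gamma_1$ starting from the same value $\len(A) - u$, hence non-increasing throughout, so the concatenation is surjective by the same length criterion. The substantive work has been done in Lemma \ref{core} and Proposition \ref{universal} (which characterizes absolute core vertices as the universally trivial ones, giving them the minimal stub length $\len(A)-u$); the present proposition is a clean assembly, and the only point requiring care is the verification that surjectivity is preserved under the concatenations in (b) and (c), which reduces to a monotonicity of stub lengths.
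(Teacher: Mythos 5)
Your proof is correct and follows essentially the same route as the paper: claim (a) from the last sentence of Lemma \ref{core}, claim (b) by routing two absolute core vertices $v(\fa_1)$, $v(\fa_2)$ through a common absolute core vertex above $\operatorname{lcm}(\fa_1,\fa_2)$ using Lemma \ref{core} and Corollary \ref{path conn}, and claim (c) by appealing to Lemma \ref{core} and concatenating. The one place where you are more explicit than the paper is in (c): the paper's proof ends by invoking Lemma \ref{core} to produce \emph{some} absolute core vertex $v_0'$ with a surjective path to $v$, leaving implicit the final step of connecting the arbitrary core vertex $v_0$ in the statement to that $v_0'$ via a path in $\cX_{\operatorname{abs}}$ and then checking, using the length criterion of Definition \ref{def-path}\ref{def-surjective} together with the fact that absolute core vertices realize the maximal stub length $\len(A)-u$, that the concatenation remains surjective. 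You spell this out correctly, which makes your write-up a slightly more complete version of the same argument rather than a different one.
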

\begin{proof}
The proof goes along similar lines as in \cite[Prop 2.4.11]{hbes}. That the absolute core subgraph contains both even odd vertices is clear from the proof of Lemma \ref{core}. 
Consider $v(\fa)$ and $v(\fb)$ two vertices of the absolute core subgraph $\cX_{\operatorname{abs}}$. By Lemma \ref{core}, we may choose $\nn \in \mathcal{N}$ divisible by $\operatorname{lcm}(\fa,\fb)$ such that $v(\nn)$ is absolute core vertex. By Corollary \ref{path conn} we know there is a path from $v(\fa)$ to $v(\nn)$ in $\cX_{\operatorname{abs},\fa}$ and similarly a path from $v(\fb)$ to $v(\nn)$ in $\cX_{\operatorname{abs},\fb}$. Since any path in $\cX_{\operatorname{abs},\fa}$ and $\cX_{\operatorname{abs},\fb}$ is also a path in $\cX_{\operatorname{abs}}$, there is a path from $v(\fa)$ to $v(\fb)$. Since any path in $\cX_{\operatorname{abs}}$ is surjective, any two absolute core vertices may be connected by a surjective path.  
Now given any $v$ in $\cX$ by Lemma \ref{core} we know there exist a absolute core vertex $v_0$ and a surjective path from $v$ to $v_0$.
\end{proof}
\begin{definition}
A \emph{global section}, $s$, of the sheaf $\stub(\cX)$ on $\cX$ is a function on vertices and edges of $\cX$,
\begin{equation*}
\begin{split}
    v \mapsto s(v) \in \stub(v)\\
    e \mapsto s(e) \in \stub(e),
\end{split}
\end{equation*}
such that for every edge $e$ with end points $v$ and $v'$
\begin{equation*}
    {\psi}^e_v(s(v))=s(e)={\psi}^e_{v'}(s(v'))
\end{equation*}
in $\stub(e)$. Similarly a global section of $\ES(\cX)$ can be defined.
\end{definition}
\begin{corollary}\label{gen}
For any global section $s$ of $\stub(\cX)$ there is a unique $\delta_0 =\delta(s)$ with $0\leq\delta_0\leq \len(A)-u$ such that $s(v)$ generates ${\mm}^{\delta_0}\stub(v)$ for every vertex $v$ of $\cX$. The global section $s$ is determined by its value at any absolute core vertex.
\end{corollary}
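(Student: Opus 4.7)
The plan is to fix an absolute core vertex $v_0$ and transport the divisibility profile of $s(v_0)$ to every other vertex along the surjective paths supplied by Proposition~\ref{path}. Since $\stub(v_0)\cong\mm^uA$ is a cyclic $A$-module of length $\len(A)-u$, we may set $\delta_0:=\ind(s(v_0),\stub(v_0))$; this integer satisfies $0\leq\delta_0\leq\len(A)-u$ and by construction $s(v_0)$ generates $\mm^{\delta_0}\stub(v_0)$. Uniqueness of $\delta_0$ is immediate from the cyclicity of $\stub(v_0)$.

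For an arbitrary vertex $v$, I take a surjective path $v_0,v_1,\ldots,v_r=v$ from Proposition~\ref{path} and prove by induction on $i$ that $s(v_i)$ generates $\mm^{\delta_0}\stub(v_i)$. The inductive step rests on two structural facts recorded just before Definition~\ref{def-core}: for any edge $e$ the even-endpoint vertex-to-edge map is an isomorphism of cyclic stubs, while the odd-endpoint map is always surjective, becoming an isomorphism precisely under the surjective-path hypothesis of Definition~\ref{def-path}\ref{def-surjective}. Consequently, on our path, $\psi^{e_i}_{v_{i+1}}$ is an iso of cyclic $A$-modules, and $\psi^{e_i}_{v_i}$ is at worst a surjection of cyclic $A$-modules. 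A surjection of cyclic $A$-modules is, up to a unit, truncation by a power of $\pi$, so it sends $\mm^{\delta_0}\stub(v_i)$ onto $\mm^{\delta_0}\stub(e_i)$. The global-section identity $\psi^{e_i}_{v_i}(s(v_i))=\psi^{e_i}_{v_{i+1}}(s(v_{i+1}))$, combined with the iso $\psi^{e_i}_{v_{i+1}}$, then forces $s(v_{i+1})=(\psi^{e_i}_{v_{i+1}})^{-1}\bigl(\psi^{e_i}_{v_i}(s(v_i))\bigr)$ to generate $\mm^{\delta_0}\stub(v_{i+1})$. Specializing at $i=r$ yields the uniform generation claim.

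The second assertion emerges from the same induction: the recipe above reconstructs $s(v_i)$ (and the intermediate edge values $s(e_i)=\psi^{e_i}_{v_i}(s(v_i))$) from the single datum $s(v_0)$ along any surjective path, so $s$ is entirely pinned down by its value at $v_0$. The step I expect to require the most care is the odd-to-even transition along the path, where $\psi^{e_i}_{v_i}$ is only surjective and $\stub(v_i)$ may be strictly longer than $\stub(v_{i+1})$: one must verify that when $\mm^{\delta_0}\stub(v_{i+1})=0$, the compatibility still holds consistently (i.e.\ $s(v_i)$ maps to $0$), which is exactly what the truncation-by-$\pi^N$ description of surjections of cyclic $A$-modules delivers. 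Beyond this point the argument is bookkeeping, and independence from the choice of surjective path is automatic since both choices exhibit $s(v)$ as the same actual element of $\stub(v)$.
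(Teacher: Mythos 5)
Your proof is correct and follows the same route as the paper: define $\delta_0$ at an absolute core vertex and transport it to every other vertex via the surjective paths of Proposition~\ref{path}, using that a surjection of cyclic $A$-modules carries the $\mm^{\delta_0}$-layer onto the $\mm^{\delta_0}$-layer. The paper phrases this by composing the edge maps into a single surjection $\stub(v_0)\to\stub(v)$ sending $s(v_0)$ to $s(v)$; your step-by-step induction along the path is the same argument with the bookkeeping made explicit, and your treatment of the odd-to-even transition (surjection onto a possibly shorter cyclic module) is exactly the point the paper elides. One small caveat: your formula $\delta_0=\ind(s(v_0),\stub(v_0))$ gives $\infty$ when $s(v_0)=0$, which would violate $\delta_0\leq\len(A)-u$; you should instead take $\delta_0$ to be the unique integer in $[0,\len(A)-u]$ for which $s(v_0)$ generates $\mm^{\delta_0}\stub(v_0)$ (equivalently $\min\{\ind(s(v_0),\stub(v_0)),\,\len(A)-u\}$), which is what the paper implicitly does. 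With that adjustment the argument is complete.
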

\begin{proof}
Let $v_0$ be a absolute core vertex. Define $\delta_0$ to be such that $s(v_0)$ generates ${\mm}^{\delta_0}\stub(v_0)$. Since $v_0$ is absolute core, we have $0\leq\delta_0\leq\len(A)-u$. By Proposition \ref{path}, for any vertex $v$ in $\cX$ consider a surjective map $\stub(v_0){\lra}\stub(v)$ taking $s(v_0)$ to $s(v)$. Then the image $s(v)$
generates $\mm^{\delta_0}\stub(v)$. 
\end{proof}
We now have the rigidity theorem below. This may be compared with \cite[Th 2.5.1]{hbes}.
\begin{theorem}\emph{(The Rigidity theorem)}\label{Rigidity}
Assume the Hypotheses \ref{key-hypothesis}. In addition, assume that we have a nontrivial free Euler system of odd type for $(T,\mathcal{F},\mathcal{L})$ such that $\pi^\varepsilon\lambda_\nn\neq0$. Then there is a unique integer $\delta$, independent of $\nn \in \mathcal{N}$, such that  ${\pi}^\varepsilon\lambda_{\nn}$ generates ${\mm}^{\delta}\stub_{\nn}$ for every $\nn \in \mathcal{N}^{even}$ and ${\pi}^\varepsilon\kappa_{\nn}$ generates ${\mm}^{\delta}\stub_{\nn}$ for every $\nn \in \mathcal{N}^{odd}$. Furthermore, $\delta$ is given by 
\begin{eqnarray*}
    \delta &=& \min\{\ind(\pi^\varepsilon\lambda_{\nn},{\mm}^uA)| \nn \in \mathcal{N}^{even}\}\\
           &=& \min\{\ind(\pi^\varepsilon\kappa_\nn,{\mm}^u\Sel_{\cF(\nn)})| \nn \in \cN^{odd}\}.
\end{eqnarray*}
\end{theorem}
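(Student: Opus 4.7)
The plan is to package the data $\{\pi^\varepsilon \lambda_\nn, \pi^\varepsilon \kappa_\nn\}$ into a global section of the stub sheaf $\stub(\cX)$ and then invoke Corollary~\ref{gen}. For each vertex $v(\nn)$ of $\cX$, I would set
\begin{equation*}
s(v(\nn)) := \begin{cases} \pi^\varepsilon \lambda_\nn & \text{if } \nn \in \cN^{even},\\ \pi^\varepsilon \kappa_\nn & \text{if } \nn \in \cN^{odd}.\end{cases}
\end{equation*}
Theorem~\ref{bound} guarantees $s(v(\nn)) \in \stub(v(\nn))$. The first task is to upgrade this vertex-wise assignment to a genuine global section of $\stub(\cX)$.

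For each edge $e = e(\nn,\nn\fl)$, I would define $s(e) \in \stub(e)$ to be the common image of the two endpoint values under the vertex-to-edge maps $\psi^e_v$. The reciprocity laws of Definition~\ref{BES} are tailor-made for this check. For example, if $\nn$ is even and $\nn\fl$ odd, the first reciprocity law
\begin{equation*}
A/(\lambda_\nn) \cong H^1_{ord}(K_\fl, T)/A \cdot \loc_\fl(\kappa_{\nn\fl})
\end{equation*}
says that the image of $\lambda_\nn$ under the fixed isomorphism $\psi^e_{v(\nn)} : A \cong H^1_{ord}(K_\fl, T)$ and the element $\loc_\fl(\kappa_{\nn\fl}) = \psi^e_{v(\nn\fl)}(\kappa_{\nn\fl})$ generate the same cyclic submodule of $H^1_{ord}(K_\fl, T)$; by rescaling the choice of $\psi^e$ at the even endpoint I can arrange strict equality. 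The second reciprocity law handles the symmetric case in which $\nn\fl$ is even. To ensure these per-edge rescalings are globally coherent, I would fix a single compatible choice at one absolute core vertex and propagate it outwards along surjective paths in $\cX_{\operatorname{abs}}$, using the path-connectedness supplied by Proposition~\ref{path}.

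Once $s$ is recognised as a global section, Corollary~\ref{gen} immediately produces a unique integer $\delta$ with $0 \leq \delta \leq \len(A) - u$ such that $s(v)$ generates $\mm^\delta \stub(v)$ at every vertex $v$ of $\cX$. Unpacking the definition of $s$ is exactly the rigidity claim, and the hypothesis $\pi^\varepsilon \lambda_\nn \neq 0$ ensures $\delta < \infty$.

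It remains to identify $\delta$ with the two minima in the statement. For $\nn \in \cN^{even}$ one has $\stub_\nn = \mm^{\len(M_\nn)} A$, and the property that $\pi^\varepsilon \lambda_\nn$ generates $\mm^\delta \stub_\nn$ translates into $\ind(\pi^\varepsilon \lambda_\nn, \mm^u A) = \delta + \len(M_\nn) - u \geq \delta$, with equality exactly when $\len(M_\nn) = u$, that is, at an even absolute core vertex; the existence of such a vertex is Proposition~\ref{path}. The parallel argument on $\cN^{odd}$, using $\stub_\nn = \mm^{\len(M_\nn)} \Sel_{\cF(\nn)}$ and the existence of odd absolute core vertices, yields the second minimum. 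The main obstacle I anticipate is the coherent rescaling of the $\psi^e_v$ at even vertices needed to promote the ``up to units'' statements of the reciprocity laws to strict equalities on every edge simultaneously; once that is done via propagation along $\cX_{\operatorname{abs}}$, the remainder is a direct application of Corollary~\ref{gen} together with routine bookkeeping.
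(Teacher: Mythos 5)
Your proposal follows the paper's proof essentially step for step: define $s$ on vertices by Theorem~\ref{bound}, extend to edges using the reciprocity laws, recognize $s$ as a global section of $\stub(\cX)$, and invoke Corollary~\ref{gen}; the identification of $\delta$ with the two minima via absolute core vertices is also the paper's argument. You correctly flag a point the paper glosses over: the reciprocity laws only assert that the submodules $\psi^e_v(s(v))\cdot A$ and $\psi^e_{v'}(s(v'))\cdot A$ of $\stub(e)$ coincide, not that the elements themselves agree.

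However, the ``coherent rescaling via propagation along $\cX_{\operatorname{abs}}$'' step you propose is both unnecessary and, as stated, would not work. It is unnecessary because each edge $e=e(\nn,\nn\fl)$ has exactly one even endpoint, and the isomorphism $\psi^e_v$ there is chosen independently for each edge; the required unit correction is thus a single free parameter per edge, with no equation coupling distinct edges, so the rescalings can simply be fixed edge by edge. It would not work as stated because $\cX_{\operatorname{abs}}$ is a proper subgraph, so ``propagating outward along surjective paths in $\cX_{\operatorname{abs}}$'' never reaches edges incident to non-core vertices. One can also sidestep the issue entirely: the proof of Corollary~\ref{gen} only tracks which cyclic submodule $s(v)$ generates, so a ``global section up to units on each edge'' suffices — which is precisely what the reciprocity laws hand you without any rescaling.
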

\begin{proof}
Let $v(\nn), v(\nn\fl)$ be two vertices of the graph $\cX$, with edge $e=e(\nn,\nn\fl)$. Consider the global sections $s(v), s(e)$ of $\stub(\cX)$ defined by
\begin{eqnarray*}
    s(v)= \begin{cases} {\pi}^\varepsilon\lambda_{\nn} \mbox{ if } \nn \in \mathcal{N}^{even}\\
    {\pi}^\varepsilon\kappa_{\nn} \mbox{ if } \nn \in \mathcal{N}^{odd}
    \end{cases}
    &\quad \mbox{ and }\quad&
    s(e)= \begin{cases} {\pi}^\varepsilon\loc_\fl(\kappa_{\nn}) \mbox{ if } \nn \in \mathcal{N}^{odd}\\
    {\pi}^\varepsilon\loc_\fl(\kappa_{\nn \fl}) \mbox{ if } \nn \in \mathcal{N}^{even}.
    \end{cases}
\end{eqnarray*}
These maps are well defined by Theorem \ref{bound}, and $s$ is a non-trivial global section of the Euler system sheaf $ES(\cX)$.
By Theorem \ref{bound} this global section is actually a global section of the subsheaf $\stub(\cX)\subset \ES(\cX)$. By Corollary \ref{gen}, there is a unique $0\leq \delta< \len(A)-u$ such that $s(v)$ generates ${\mm}^{\delta}\stub(v)$ for every vertex $v$. For any other vertex $v$ with $s(v) \neq 0$ we have 
\begin{equation*}
    \delta= \ind(s(v),\stub(v)) \leq \ind(s(v), {\mm}^u\ES(v))=\begin{cases}
    \ind(s(v), \mm^uA), &\mbox{ if } v \mbox{ is even}\\
    \ind(s(v), \mm^u\Sel_{\cF(\nn)}), &\mbox{ if } v \mbox{ is odd}
    \end{cases}
\end{equation*}
with equality if and only if $\len(M_\nn)=u$, i.e.,  $v$ is absolute core. As there are even absolute core vertices by  Proposition \ref{core}, we have
\begin{equation*}
    \delta= \min\{ \ind(s(v),{\mm}^u\ES(v)) | v \mbox{ is even}\}
\end{equation*}
and similarly with even replaced by odd.
\end{proof}

\section{Iwasawa Theory}
Recall that $K$ is an imaginary quadratic field of $\Q$ of discriminant $D_K$ and quadratic character $\epsilon$, $p \geq 5$ be a rational prime such that the elliptic curve $E/\Q$ of conductor $N$ has good ordinary or multiplicative reduction at $p$. Also assume that $(D_K, pN)=1$ and $N^-$ be the largest divisor of $N$ such that $(N^-,p)=1$ and $\epsilon(q)=1$ for all primes $q|N^-$. 
We write $N^+ = N/N^-$, so that  $N= N^+N^-$.

As before, let $K^{ac}$ be the anticyclotomic $\Z_p$ extension of $K$ such that the Galois group $\Gamma^- :=\gal(K^{ac}/K) \cong \Z_p$ is characterized by $\tau\sigma\tau={\sigma}^{-1}$ for all $\sigma \in \Gamma^-$ and $\tau$ be a fixed complex conjugation. For any $m\geq0$, consider the subfields $K \subset K_m \subset K^{ac}$ be such that $[K_m:K]=p^m$ and $\Lambda=\Z_p[[\Gamma^-]]$ be the Iwasawa algebra of $\Gamma^-$ over $\Z_p$.

Throughout this section we assume the following hypothesis.
\begin{hypothesis}\label{hyp-N-minus}
\begin{enumerate}
    \item $N^-$ is squarefree,
    \item $(E[p])^{G_K}=0$.
\end{enumerate}
\end{hypothesis}
\subsection{Selmer Modules over $\Lambda$}
\begin{definition}
A degree two prime $\fl \nmid N$ of $K$ is called $k$-admissible if \begin{equation*}
\begin{split}
    N(\fl) &\neq 1 \pmod{p}\\
    E[p^k] &\cong \Z/{p^k}\Z \oplus {\mu}_{p^k} \quad \mbox{ as } \quad \gal(K^{un}_{\fl}/K_\fl)-\mbox{module}.
\end{split}    
\end{equation*}
Let $\cL_k$ denote the set of $k$-\emph{admissible} primes, and $\mathcal{N}_k$ denote the set of square free product of primes in $\mathcal{L}_k$. The set of 1-admissible primes of $\cL$ are simply referred to as \emph{admissible}.
\end{definition}
 Let $T_p$ denote the Tate module of the elliptic curve $E$.
 Since $N^-$ is squarefree, for any rational prime $q$ with $q \mid N^-$, $E$ has multiplicative reduction at $q$, and hence split multiplicative reduction at the prime $\fq$ of $K$ above $q$. By the Tate parametrization, the $G_{K_\fq}$-representation on $T_p$ 
 gives a filtration of $G_{K_{\fq}}$-modules
 \begin{equation*}
 0\subset\operatorname{Fil}_q(T_p) \subset T_p    
 \end{equation*}
 where $\operatorname{Fil}_q(T_p)\cong\Z_p(\epsilon_{cyc})$ and $\epsilon_{cyc}$ is the cyclotomic character of $G_{K_\fq}$.
  For any extension $F/K_{\fq}$, we define the \emph{ordinary} submodule by
 \begin{equation*}
 H^1_{ord}(F,T_p):=\img \left(H^1(F,\operatorname{Fil}_q(T_p)\lra H^1(F,T_p)\right).
 \end{equation*}
 Similarly, we also define $H^1_{ord}(F,E[p^k])$.
 For a $k$-admissible prime $\fl \in \mathcal{L}_k$, we have a similar local condition $H^1_{ord}(F,E[p^k])$ for any extension $F/K_{\fl}$.  
  Since $T_p$ is ordinary, we have a filtration of $G_{\Q_p}$-modules:
 \begin{equation*}
     0\subset\operatorname{Fil}_p(T_p)\subset T_p
 \end{equation*}
 on which the inertia subgroup at $p$ acts on $\operatorname{Fil}_p(T_p)$ by the cyclotomic character.
 We define the ordinary local condition at $p$ by 
 \begin{equation*}
     H^1_{ord}(F,T_p):=
     \img\left(H^1(F,\operatorname{Fil}_p(T_p))\lra H^1(F,T_p)\right)
 \end{equation*}
 for any finite extension $F/{\Q}_p$. 
 
 Replacing $T_p$ by $E[p^k]$ and $E[p^{\infty}]$, we can define the ordinary cohomology groups for these modules. 
 We similarly define ordinary cohomology and selmer groups for the
twists $T_p\otimes\alpha$ for characters $\alpha$ of $\Gamma^-$. Writing $T$ for $T_p\otimes\alpha$, and for every integer $m\geq0$, we define:
\begin{eqnarray*}
    H^1_{ord}(K_{m, \fl},T) = \bigoplus_{w\mid\fl} H^1_{ord}(K_{m,w},T)\\
    H^1_{unr}(K_{m, \fl},T) = \bigoplus_{w\mid\fl} H^1_{unr}(K_{m,w},T).
\end{eqnarray*}
 By \cite[Lemma 3.1.2]{hbes}, 
 for any $k$-admissible prime $\fl \in \mathcal{L}_k$, the modules 
      $\varinjlim_{m} H^1_{ord}(K_{m, \fl},E[p^k])$ and 
      $\varprojlim_{m} H^1_{unr}(K_{m, \fl},E[p^k])$ 
 are free $\Lambda/p^k{\Lambda}$-module of rank one. 
  \begin{definition}[Selmer groups]\label{compact-selmer} We define the compact selmer group by
 \begin{equation*}
 \begin{split}
     \cS(K^{ac},T_p)=\varprojlim_m \ker\left[H^1(K_m,T_p)\lra
     \left(\bigoplus_{\fl\nmid pN^-} \dfrac{H^1(K_{m,\fl},T_p)}{H^1_{unr}(K_{m,\fl},T_p)}
     \bigoplus_{\fl\mid pN^-}\dfrac{H^1(K_{m,\fl},T_p)}{H^1_{ord}(K_{m,\fl},T_p)}\right)\right]
\end{split}
 \end{equation*}
 We write $\cS:=\cS(K^{ac},T_p)$. We similarly define $\cS(K^{ac},E[p^k])$ by replacing $T_p$ by $E[p^k]$. We define
 the standard Selmer group over the anticyclotomic $\Z_p$-extension:
  \begin{equation*}
 \begin{split}
     \Sel(K^{ac},E[p^\infty])=\varinjlim_m\ker\left[ H^1(K_m,E[p^\infty])
     \lra
     \left(\bigoplus_{\fl\nmid pN^-} \dfrac{H^1(K_{m,\fl},E[p^\infty])}{H^1_{unr}(K_{m,\fl},E[p^\infty])}
     \bigoplus_{\fl\mid pN^-}\dfrac{H^1(K_{m,\fl},E[p^\infty])}{H^1_{ord}(K_{m,\fl},E[p^\infty])}\right)\right]
\end{split}
 \end{equation*}
 We also write 
 $X:= \operatorname{Hom}(\Sel(K^{ac}, E[p^{\infty}]),\Q_p/\Z_p)$
 and for any $\nn \in \mathcal{N}_k$:
 \begin{equation*}
     \cS_{\nn}(K^{ac},E[p^k]) \subset \varprojlim_{m} H^1(K_m,E[p^k])
 \end{equation*}
be the $\Lambda$-submodule of classes which are ordinary at the primes dividing ${\nn}p{N^-}$ and unramified at all other primes. Then $\cS=\cS_{1}(K^{ac},E[p^k])$.
\end{definition}
Let $V_p= T_p \otimes \Q_p$. Then we have an exact sequence
\begin{equation*}
    0{\lra}T_p{\lra}V_p{\lra}E[p^{\infty}]{\lra}0
\end{equation*}
Consider prime ideal $\fP \neq p\Lambda$ of height one in $\Lambda$, and let $\OO_{\fP}$  denote the integral closure of $\Lambda/\fP$, viewed as a Galois module with trivial action. 
Then the quotient field $L_\fP$ of $\OO_\fP$ is a finite extension of $\Q_p$. Let $\mm_{\fP}$ denote the maximal ideal of $\OO_{\fP}$.
Let $T_{\fP}=T_p\otimes\Lambda$, viewed as a $G_K$-module via the natural map $G_K{\lra}{\Lambda}^{\times}$ which is non-trivial.  Tensoring with $\OO_{\fP}$, we obtain an exact sequence of $\OO_{\fP}[[G_K]]$-modules
\begin{equation*}
    0{\lra}T_{\fP}{\lra}V_{\fP}{\lra}W_{\fP}{\lra}0. 
\end{equation*}
Now for any prime $\fl$ of $K$ and $M$ be any one of $T_{\fP}, V_{\fP}, 
\mbox{ or } W_{\fP}$, we define the submodule 
\begin{equation*}
    H^1_{\mathcal{F}_{\fP}}(K_{\fl},M) \subset H^1(K_{\fl},M)
\end{equation*}
as follows. First suppose $M=V_{\fP}$. 
We define 
\begin{equation*}
H^1_{\mathcal{F}_{\fP}}(K_{\fl},V_{\fP})=\begin{cases}
H^1_{unr}(K_{\fl},V_{\fP}) & \mbox{ if } \fl\nmid pN^-\\
\img\left[H^1(K_{\fl},\operatorname{Fil}_q(T_p) \otimes L_\fP){\lra} H^1(K_{\fl},V_{\fP})\right] &\mbox{ if } \fl\mid pN^-,
\end{cases}
\end{equation*}
If $M=T_{\fP}$ or $W_{\fP}$ then  $H^1_{\mathcal{F}_{\fP}}(K_{\fl},M)$ is obtained from  $H^1_{\mathcal{F}_{\fP}}(K_{\fl},V_{\fP})$ ( see Remark \ref{Note}). These local submodules define global Selmer groups which we denote by $\Sel_{\mathcal{F}_{\fP}}(K,M)$.
\subsection{Euler Systems  over $\Lambda$}
\begin{definition}
For $\nn \in \mathcal{N}_1$, let $n\in\N$ such that $n{\OO}_K= \nn$. Then $\nn$ is said to be \emph{definite} if $\epsilon(nN^-)=-1$ and \emph{indefinite} if $\epsilon(nN^-)=1$. Let $\mathcal{N}^{definite}_k \subset \mathcal{N}_k$ denote the subset which consists of products of definite primes and $\mathcal{N}^{indefinite}_k$ the set of products of indefinite primes.

We also suppose that for any positive integer $k$ we are given a pair of families:
\begin{equation}\label{ES}
    \{\kappa_{\nn} \in \cS_{\nn}(K^{ac},E[p^k]) | \nn \in \mathcal{N}^{indefinite}_k \} \quad\mbox{ and }\quad
    \{\lambda_{\nn} \in \Lambda/p^k{\Lambda} | \nn \in \mathcal{N}^{definite}_k \}
\end{equation}
which are compatible with the maps $\Lambda/p^{k+1}{\Lambda} \to \Lambda/p^k{\Lambda}$, $E[p^{k+1}] \to E[p^k]$ and the inclusion $\mathcal{N}_{k+1} \subset \mathcal{N}_k$ as $k$ varies. We also assume that it satisfies the first and second reciprocity laws:
\begin{enumerate}
\item for any $\nn\fl \in \mathcal{N}_k^{indefinite}$ there is an isomorphism of $\Lambda$-modules
\begin{equation}\label{rec1}
    \lim_{\stackrel{\longleftarrow}{m}} H^1_{ord}(K_{m,\fl},E[p^k]) \cong \Lambda/p^k{\Lambda}, \mbox{ with } \img(\loc_{\fl}(\kappa_{\nn\fl}))=\lambda_{\nn},
\end{equation}
\item for any $\nn\fl \in \mathcal{N}^{definite}_k$ there is an isomorphism of $\Lambda$-modules
\begin{equation}\label{rec2}
    \lim_{\stackrel{\longleftarrow}{m}} H^1_{unr}(K_{m,\fl},E[p^k]) \cong \Lambda/p^k{\Lambda}, \mbox{ with } \img(\loc_{\fl}(\kappa_{\nn}))=\lambda_{\nn\fl}. 
\end{equation} 
\end{enumerate}
\end{definition}
Since the empty product lies in $\mathcal{N}_k$ for every $k$, we obtain two distinguished elements 
\begin{equation}\label{distinguished}
\begin{split}
    \lambda^{\infty} \in \Lambda &\quad \mbox{if} \quad \epsilon(N^-)=-1\\
    \kappa^{\infty} \in \cS        &\quad \mbox{if} \quad \epsilon(N^-)=1
\end{split}
\end{equation}
which is defined as the inverse limit of $\lambda_1$ and $\kappa_1$ as $k$ varies. 
\begin{lemma}[{\cite[Lemma 3.2.2]{hbes}}]\label{torsion-free}
The $\Lambda$-module $\cS$ is torsion free.
\end{lemma}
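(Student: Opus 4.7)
The plan is to realize $\cS$ as a $\Lambda$-submodule of $H^1(K,\mathbb{T})$ for the ``big'' Galois representation $\mathbb{T} := T_p \otimes_{\Zp} \Lambda^\iota$ (with $G_K$ acting diagonally through its natural action on $T_p$ and through the tautological character $G_K \surj \Gamma^- \inj \Lambda^\times$ on the $\iota$-twisted $\Lambda^\iota$), and then to bound the $f$-torsion of this cohomology for every nonzero $f \in \Lambda$ by an $H^0$ computation.

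First, I would observe that Hypothesis \ref{hyp-N-minus}(ii), combined with the fact that $K^{ac}/K$ is a pro-$p$ extension, forces $E(K_m)[p]=0$ for every $m \geq 0$. Indeed, any nontrivial $\mathbb{F}_p[\gal(K_m/K)]$-submodule of $E[p]$ would possess a nontrivial $\gal(K_m/K)$-fixed vector, since $\gal(K_m/K)$ is a finite $p$-group acting on an $\mathbb{F}_p$-vector space, and such a vector would then be $G_K$-fixed, contradicting $(E[p])^{G_K}=0$. Consequently $H^0(K_m,T_p)=0$ for every $m$. Shapiro's lemma identifies $H^1(K_m,T_p) \cong H^1(K,T_p \otimes_{\Zp} \Zp[\gal(K_m/K)]^\iota)$ compatibly in $m$, and the trivial Mittag--Leffler condition coming from the vanishing of the $H^0$ system yields
\begin{equation*}
\varprojlim_m H^1(K_m,T_p) \;\cong\; H^1(K,\mathbb{T}).
\end{equation*}
Since $\cS$ is by construction a $\Lambda$-submodule of $\varprojlim_m H^1(K_m,T_p)$, it suffices to show that $H^1(K,\mathbb{T})$ has no nonzero $\Lambda$-torsion.

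Next, fix any nonzero $f \in \Lambda$. Because $\Lambda$ is an integral domain and $\mathbb{T}$ is $\Lambda$-free of rank two, multiplication by $f$ is injective on $\mathbb{T}$, giving the short exact sequence of continuous $G_K$-modules
\begin{equation*}
0 \lra \mathbb{T} \longby{f} \mathbb{T} \lra \mathbb{T}/f\mathbb{T} \lra 0.
\end{equation*}
The associated long exact sequence displays $H^1(K,\mathbb{T})[f]$ as a quotient of $H^0(K,\mathbb{T}/f\mathbb{T})$, reducing the lemma to checking that $(\mathbb{T}/f\mathbb{T})^{G_K}=0$ for every nonzero $f \in \Lambda$.

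The main obstacle is this last vanishing. My approach will be that $M := \mathbb{T}/f\mathbb{T}$ is finitely generated over the Noetherian local ring $\Lambda/(f)$, hence $\mathfrak{m}_\Lambda$-adically separated (by Krull's intersection theorem applied to each cyclic submodule, since the annihilator of any nonzero element lies in $\mathfrak{m}_\Lambda$). Therefore a putative nonzero $x \in M^{G_K}$ would descend to a nonzero $G_K$-invariant in $M/\mathfrak{m}_\Lambda M$. But $\mathfrak{m}_\Lambda$ contains both $p$ and the augmentation ideal of $\Gamma^-$, so as $G_K$-modules
\begin{equation*}
M/\mathfrak{m}_\Lambda M \;\cong\; T_p/pT_p \;=\; E[p],
\end{equation*}
and this contradicts $(E[p])^{G_K}=0$, completing the proof.
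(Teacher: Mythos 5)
Your approach is the standard one and essentially reproduces Howard's argument (the paper cites \cite[Lemma 3.2.2]{hbes} and gives no separate proof; Howard's proof only uses $H^0(K,E[p])=0$, which is why the paper can quote it under the weaker Hypothesis \ref{hyp-N-minus}(ii)). The reductions — $E(K_m)[p]=0$ via the pro-$p$ group argument, the Shapiro/Mittag--Leffler identification $\varprojlim_m H^1(K_m,T_p)\cong H^1(K,\mathbb{T})$, and the reduction of torsion-freeness to $(\mathbb{T}/f\mathbb{T})^{G_K}=0$ via the long exact sequence — are all correct and are what one finds in Howard.

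However, the final step as written has a genuine gap. Separatedness plus $(M/\mathfrak{m}_\Lambda M)^{G_K}=0$ does \emph{not} imply $M^{G_K}=0$ for a general finitely generated module $M$: a nonzero $x\in M^{G_K}$ need not have nonzero image in $M/\mathfrak{m}_\Lambda M$, and the statement that it ``would descend to a nonzero $G_K$-invariant in $M/\mathfrak{m}_\Lambda M$'' is not justified. Concretely, take $R=\Zp$, let $V$ be a two-dimensional $\mathbb{F}_p[G]$-module sitting in a nonsplit extension $0\to\chi\to V\to\mathbf{1}\to 0$ with $\chi\neq\mathbf{1}$ (so $V^G=0$), and let $M=\Zp/p^2\cdot e_1\oplus \Zp/p\cdot e_2$ with $\bar e_1$ mapping onto the trivial quotient; lifting the action, one checks that $pe_1$ is $G$-fixed, so $M^G\neq 0$ even though $(M/pM)^G=V^G=0$. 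What rescues your argument is precisely that $M=\mathbb{T}/f\mathbb{T}$ is \emph{free} over $\Lambda/(f)$ (since $\mathbb{T}$ is $\Lambda$-free of rank two). Freeness makes the natural surjection
\begin{equation*}
\bigl(\mathfrak{m}_\Lambda^n/\mathfrak{m}_\Lambda^{n+1}\bigr)\otimes_{\mathbb{F}_p}\bigl(M/\mathfrak{m}_\Lambda M\bigr)\lra \mathfrak{m}_\Lambda^n M/\mathfrak{m}_\Lambda^{n+1}M
\end{equation*}
an isomorphism, so every graded piece of the $\mathfrak{m}_\Lambda$-adic filtration on $M$ is a direct sum of copies of $E[p]$, hence has no $G_K$-invariants. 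Then for nonzero $x\in M^{G_K}$ one picks the largest $n$ with $x\in\mathfrak{m}_\Lambda^n M$ (which exists by separatedness) and obtains a nonzero invariant in $\mathfrak{m}_\Lambda^n M/\mathfrak{m}_\Lambda^{n+1}M$, a contradiction. You should insert this freeness observation and run the argument through the graded pieces rather than through $M/\mathfrak{m}_\Lambda M$ alone.
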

Regarding the rank of $\cS$ and the characteristic power series of the torsion part of $X$, we have the following theorem, which is the main theorem here.
\begin{theorem}\label{Main Theorem}
Let the distinguished elements $\lambda^\infty,\kappa^\infty$ be nonzero,  and $X_{{tor}}$ denote the torsion submodule of $X$. Then, we have the following.
\begin{enumerate}
\item The rank formulas:
 $   \operatorname{rank}_{\Lambda}\cS = \operatorname{rank}_{\Lambda}X = \begin{cases} 0 \quad \mbox{if} \quad \epsilon(N^-)=-1\\
    1 \quad \mbox{if} \quad \epsilon(N^-)=1.
    \end{cases}$
\item For any height one prime $\fP$ of $\Lambda$, we have
\begin{equation}\label{main-inequality}
    \ord_{\fP}(\operatorname{char}(X_{{tor}})) \leq 2. 
    \begin{cases} 
    \ord_{\fP}(\lambda^{\infty}) &\quad \mbox{if} \quad \epsilon(N^-)=-1\\
    \ord_{\fP}(\operatorname{char}(\cS/\Lambda{\kappa}^{\infty})) &\quad \mbox{if} \quad \epsilon(N^-)=1.
    \end{cases}
\end{equation}
\item If there exists $s\in\N$ such that for all $t \geq s$ the set
\begin{equation*}
    \{\lambda_{\nn} \in \Lambda/p^t{\Lambda} | \nn \in \mathcal{N}^{definite}_t \}
\end{equation*}
contains an element which is nonzero in $\Lambda/(\fP, p^s)$, then equality holds in the inequality \eqref{main-inequality}.
\end{enumerate}
\end{theorem}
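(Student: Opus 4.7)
The plan is to reduce each of the three assertions to the Artinian bipartite Euler system machinery of Sections \ref{selmer}--\ref{section-stub} by specializing at each height one prime $\fP \neq p\Lambda$ of $\Lambda$, following the strategy of Howard \cite[\S 3]{hbes} from the irreducible case. For such a prime, pick a character $\alpha:\Gamma^- \to \OO_\fP^\times$ cutting $\fP$ out, and for each $k \geq 1$ apply Section \ref{section-key} with the principal Artinian local ring $A = \OO_\fP/\mm_\fP^k$ and $T = T_p^{(k)}\otimes\alpha$. Hypothesis \ref{hyp-N-minus}(ii) yields Hypothesis \ref{key-hypothesis}(i), and cartesianness of the Selmer structure $\mathcal{F}_\fP$ is obtained as in \cite[\S 3.1]{cgls}. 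Specializing the compatible families \eqref{ES} along the map $\Lambda/p^k\Lambda \to A$ and using \eqref{rec1}--\eqref{rec2}, one obtains a bipartite Euler system over $A$ in the sense of Definition \ref{BES}; it is of odd type by Proposition \ref{even type}, non-trivial by the assumed non-vanishing of $\lambda^\infty$ or $\kappa^\infty$, and free (once $k > \varepsilon_0$) by Lemma \ref{key-lemma}.

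For part (i), $\cS$ is $\Lambda$-torsion free by Lemma \ref{torsion-free}, so its $\Lambda$-rank is read off the specialization at $\fP$, where by the structure theorem Proposition \ref{selmer-facts}\ref{structure} it equals $e(1) \in \{0,1\}$; Proposition \ref{selmer-facts}\ref{dim} combined with the sign of the functional equation identifies $e(1)$ as $(1+\epsilon(N^-))/2$, and global Poitou--Tate duality applied to the self-dual Selmer structure gives $\operatorname{rank}_\Lambda X = \operatorname{rank}_\Lambda \cS$. For part (ii), Theorem \ref{bound} applied at the empty vertex $\nn = 1$ at level $k$ gives
\begin{equation*}
\len_A(M_1^{(k)}) \leq
\begin{cases}
\ind(\lambda_1^{(k)}, A) + \varepsilon & \text{if } \epsilon(N^-) = -1,\\
\ind(\kappa_1^{(k)}, \Sel_{\mathcal{F}(1)}(K,T)) + \varepsilon & \text{if } \epsilon(N^-) = 1.
\end{cases}
\end{equation*}
Taking the inverse limit over $k$, a standard control-theorem identification relates $2\len_A(M_1^{(k)})$ (for $k$ large) to $\ord_\fP(\operatorname{char}(X_{tor}))$, where the factor $2$ reflects the $M_1 \oplus M_1$ summand; the right-hand sides converge to $\ord_\fP(\lambda^\infty)$ in the definite case and to $\ord_\fP(\operatorname{char}(\cS/\Lambda\kappa^\infty))$ in the indefinite case (the latter via the surjective path identifications of Proposition \ref{loc stub}), producing the divisibility claimed in (ii).

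For part (iii), the hypothesis supplies, at each level $t \geq s$, a definite element $\lambda_\nn \in \Lambda/p^t\Lambda$ whose image in $\Lambda/(\fP, p^s)$ does not vanish; translated into the Artinian setting, this forces the invariant $\delta$ produced by the Rigidity Theorem \ref{Rigidity} to vanish at every sufficiently large $k$. When $\delta = 0$, the inequalities of Theorem \ref{bound} become equalities at the absolute core vertices, and by the path-connectedness of $\cX_{\operatorname{abs}}$ (Proposition \ref{path}) equality propagates along surjective paths to every vertex, whence on taking inverse limits the bound in (ii) is upgraded to an equality. The chief obstacle throughout is handling the error term $\varepsilon = \varepsilon_0\rho_0$, which is zero in the irreducible case of \cite{hbes} but here survives: one must verify that since $\varepsilon$ is a constant independent of $k$, while the $\fP$-adic invariants stabilize as $k\to\infty$, the error can be absorbed into the limiting identities without obstructing either the divisibility of (ii) or the equality of (iii). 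Making this absorption quantitatively precise, in parallel with the treatment of the analogous error in \cite[Theorem 3.2.1]{cgls}, is the most delicate step of the argument.
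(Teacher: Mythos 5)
Your overall reduction to the Artinian machinery at specializations $\fP$ of $\Lambda$ is the right idea and matches the paper's strategy. For part (i) your argument is essentially the paper's, modulo some imprecision: the paper actually deduces the rank of $\cS$ from Proposition~\ref{Sel length} (which in turn rests on the Rigidity Theorem), then transfers to $X$ via Proposition~\ref{Shap}, rather than ``reading off $e(1)$'' from Proposition~\ref{selmer-facts} alone; and the connection between $e(1)$ and $\epsilon(N^-)$ goes through the reciprocity laws, not directly through the parity statement in Proposition~\ref{selmer-facts}\ref{dim}.

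The genuine gap is in parts (ii) and (iii), where you identify but do not resolve the error-term problem. Your proposal says ``a standard control-theorem identification'' and ``the error can be absorbed into the limiting identities'' and then admits that ``making this absorption quantitatively precise\dots is the most delicate step.'' But this step is precisely where the proof lives, and the paper's mechanism is not a control theorem in $k$: it is the family of approximating primes $\fP_m$ (e.g.\ $\fP_m = (g+p^m)\Lambda$), with $\Lambda/\fP_m \cong \Lambda/\fP$ by Hensel's lemma. The identity from Proposition~\ref{Sel length} at each $\fP_m$ has the form
\begin{equation*}
\len_{\OO_{\fP_m}}(\cdots) + 2e\,\delta_{\fP_m}(k+\varepsilon) = 2\,\len_{\OO_{\fP_m}}(\cdots) + 2\varepsilon,
\end{equation*}
and after converting to $\Z_p$-lengths these grow linearly in $m$ with slope controlled by $\ord_\fP(\operatorname{char}(X_{tor}))$ and $\ord_\fP(\lambda^\infty)$ (or $\ord_\fP(\operatorname{char}(\cS/\Lambda\kappa^\infty))$), while the constants $\varepsilon$ and the various $O(1)$ contributions are bounded independently of $m$. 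Since $\delta_{\fP_m}\geq 0$, dividing by $m$ and letting $m\to\infty$ yields the inequality in (ii); this is the crucial quantitative absorption you left open.

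Your account of part (iii) is also incorrect in substance: the hypothesis does \emph{not} force $\delta = 0$. What it gives (and what the paper proves) is that $\delta_{\fP_m}(k)$ is \emph{bounded}, uniformly in $m$ and $k$. The paper exhibits an explicit bound $\delta_{\fP_m}(k+\varepsilon,j) < 2e(k_1+\varepsilon)$ by tracking the non-vanishing of $\lambda_{n(j)}$ through the maps $\Lambda/(\fP_m,p^{k_1}) \to \OO_{\fP_m}/p^{k_1}\OO_{\fP_m}$, using that $\operatorname{coker}[\Lambda/\fP_m\hookrightarrow\OO_{\fP_m}]$ is finite and independent of $m$. Once $\delta_{\fP_m}$ is bounded, the same $m\to\infty$ argument that gives the inequality in (ii) upgrades it to an equality in (iii); no appeal to $\delta=0$ or to propagation of equality along surjective paths is made. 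In short: the structure of your argument is broadly sound, but the $\fP_m$ trick that makes the lengths grow while $\varepsilon$ stays put is missing, and that is not a detail but the proof.
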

After some preliminary results, we prove this theorem below in Section \ref{main proof}.

\begin{prop}[{\cite[Prop 3.3.2]{hbes}}]\label{Shap}
Shapiro's lemma, the natural map $T_p{\otimes}\Lambda{\lra}T_{\fP}$, and its dual induce maps
\begin{equation*}
    \cS/{\fP}\cS \lra \Sel_{\mathcal{F}_{\fP}}(K,T_{\fP}), \quad
    \Sel_{\mathcal{F}_{\fP}}(K,W_{\fP}){\lra} \Sel(K^{ac},E[p^{\infty}])[\fP]
\end{equation*}
The first map is injective. There is a finite set of height one primes, say $\Sigma_{\Lambda}$ of $\Lambda$, such that if $\fP \notin \Sigma_{\Lambda}$, then these maps have finite kernel and cokernel both of which are bounded by a constant depending on $[\OO_{\fP}: \Lambda/\fP]$ but not on $\fP$ itself.
\end{prop}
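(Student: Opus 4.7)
The plan is to construct both maps via Shapiro's lemma together with the change-of-coefficients $\Lambda\surj\Lambda/\fP\inj\OO_\fP$, and then to bound their kernels and cokernels by a place-by-place analysis of the associated local conditions.

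First I would reinterpret $\cS$ as a Selmer group over $K$ for the $\Lambda$-adic Galois representation $\mathbf{T}:=T_p\otimes_{\Z_p}\Lambda$, on which $G_K$ acts diagonally via $\rho_E$ on the first factor and via the tautological character $G_K\surj\Gamma^-\inj\Lambda^\times$ on the second. Shapiro's lemma applied at each finite layer of $K^{ac}/K$, combined with corestriction-compatibility of the unramified/ordinary local conditions appearing in Definition \ref{compact-selmer}, yields canonical identifications
\begin{equation*}
\cS\cong\Sel_{\cF_\Lambda}(K,\mathbf{T}),\qquad \Sel(K^{ac},E[p^\infty])\cong\Sel_{\cF_\Lambda^{*}}(K,\mathbf{A}),
\end{equation*}
where $\mathbf{A}:=E[p^\infty]\otimes_{\Z_p}\Lambda^\vee$ and $\cF_\Lambda^{*}$ denotes the Pontryagin-dual Selmer structure. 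The surjection $\mathbf{T}\surj\mathbf{T}/\fP\mathbf{T}$ followed by $\Lambda/\fP\inj\OO_\fP$ then furnishes the first map once one verifies place-by-place that $\cF_\Lambda$-classes are sent into $\cF_\fP$-classes; the second map is obtained symmetrically from the $\fP$-torsion of $\mathbf{A}$ together with the natural $W_\fP\surj\mathbf{A}[\fP]$.

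For injectivity of the first map, note that $\Lambda$ is a two-dimensional regular local ring and hence a UFD, so the height-one prime $\fP=(f)$ is principal. The short exact sequence $0\to\mathbf{T}\xrightarrow{f}\mathbf{T}\to\mathbf{T}/\fP\mathbf{T}\to 0$ yields a long exact Galois cohomology sequence, and since $\cS$ is $\Lambda$-torsion-free by Lemma \ref{torsion-free}, the resulting connecting piece $\cS[f]$ vanishes; this gives an injection $\cS/\fP\cS\inj \Sel_{\cF_\Lambda/\fP}(K,\mathbf{T}/\fP\mathbf{T})$. Composition with the map induced by $\mathbf{T}/\fP\mathbf{T}\inj T_\fP$ remains injective because $\Lambda/\fP\inj\OO_\fP$ is, giving the asserted embedding into $\Sel_{\cF_\fP}(K,T_\fP)$.

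The heart of the argument, and the main obstacle, is the uniform kernel and cokernel bound as $\fP$ varies. A snake-lemma comparison of the defining sequences of $\cS/\fP\cS$ and $\Sel_{\cF_\fP}(K,T_\fP)$ shows that both quantities are controlled by three kinds of contributions: (a) the local kernel and cokernel of $H^1(K_w,\mathbf{T})/\fP\lra H^1(K_w,T_\fP)$ restricted to the local conditions, for $w\in\Sigma_\cF$; (b) the global invariants $H^0(K,\mathbf{T}/\fP\mathbf{T})$ and $H^0(K,\mathbf{A}[\fP])$; and (c) the cokernel of $\Lambda/\fP\inj\OO_\fP$, whose length is exactly $[\OO_\fP:\Lambda/\fP]$. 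Away from $\Sigma_\cF$ the local terms vanish as soon as $\det(1-\frob_w\mid\mathbf{T})$ is invertible modulo $\fP$, and at places in $\Sigma_\cF$ (those dividing $pN^-$, and the archimedean places) local Tate duality together with the explicit filtrations $\operatorname{Fil}_p(T_p)$ and $\operatorname{Fil}_q(T_p)$ expresses each discrepancy in terms of a further $H^0$-torsion module. One then takes $\Sigma_\Lambda$ to consist of $(p)$ together with the height-one primes in the support of any of the finitely many finitely generated $\Lambda$-modules arising in (a)--(c); finiteness of $\Sigma_\Lambda$ follows because each offending module is finitely generated over $\Lambda$ and the support of such a module meets the height-one primes in only finitely many points.
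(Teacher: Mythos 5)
The paper's own proof of this proposition is essentially a citation: it points to \cite[Propositions 5.3.13 and 5.3.14]{mr} and then isolates the one place where the irreducibility hypothesis of Mazur--Rubin enters, namely the \emph{Cartesian} property of the Selmer structure (needed for the map $\pi^\ast_\fP$, i.e.\ your second map), and observes that this property has already been verified in Lemma \ref{sel[m]} under only the assumption $(E[p])^{G_K}=0$. That one observation is what the paper is actually contributing to this step; the rest is Mazur--Rubin's machinery unchanged.

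Your proposal reconstructs that machinery from scratch. The construction of the maps via Shapiro's lemma and corestriction-compatibility of the local conditions, the injectivity of $\cS/\fP\cS \inj \Sel_{\cF_\fP}(K,T_\fP)$ via principality of $\fP$ and $\Lambda$-torsion-freeness of $\cS$ (Lemma \ref{torsion-free}), and the finiteness of $\Sigma_\Lambda$ coming from finitely generated $\Lambda$-modules having finite height-one support are all correct and match the standard argument.

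Where the proposal has a real gap is exactly the point the paper is at pains to flag. You dispatch the second map in a single clause (``the second map is obtained symmetrically''), but it is not symmetric: bounding the kernel and cokernel of $\Sel_{\cF_\fP}(K,W_\fP)\to\Sel(K^{ac},E[p^\infty])[\fP]$ requires comparing the Selmer module of $W_\fP$ with the $p^k$-torsion of $\Sel(K^{ac},E[p^\infty])$, and that comparison goes through the identifications $\Sel_{\cF_\fP}(K,W_\fP[p^k])\cong\Sel_{\cF_\fP}(K,W_\fP)[p^k]$ of Lemma \ref{sel[m]}, which are exactly what the Cartesian property delivers. You never invoke it. Your item (a) analyzes local discrepancies at a fixed level, but the uniformity in $\fP$ (the bound depending only on $[\OO_\fP:\Lambda/\fP]$) is obtained in Mazur--Rubin precisely because the Cartesian property lets one pass cleanly between $T/\mm^iT$ and $T[\mm^i]$ at every place in $\Sigma_\cF$. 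Without that you do not get a bound independent of $\fP$ itself. Your snake-lemma analysis is the right skeleton, but it is missing the mechanism that makes the local-condition mismatches uniformly bounded rather than merely finite for each $\fP$. Moreover, since the whole point of this paper is to relax irreducibility to $(E[p])^{G_K}=0$, a proof of this proposition that never mentions where the weaker hypothesis is used (and why it suffices) is missing the content the proposition is meant to carry.
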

\begin{proof}

The proof follows as in \cite[Proposition 5.3.13, 5.3.14]{mr}. There the Cartesian property of the selmer structure for $T/\mm T$ is required  to bound the kernel and cokernel of the second map (which is denoted by $\pi^\ast_\fP$ in \emph{loc. cit}). In our situation, when we have $E[p]^{G_K}=0$, the Cartesian property of the Selmer structure for $T/\mm T$ still holds, which we can use to bound the kernel and cokernel ( see the last paragraph of the proof of \cite[Proposition 5.3.14]{mr}).
\end{proof}

\begin{lemma}[{\cite[Lemma 3.7.1]{mr}}]\label{lambda}
Let $S_{\fP}:=\Sel_{\mathcal{F}_{\fP}}(K,T_{\fP})$. Then the natural map
\begin{equation*}
    S_{\fP}/p^kS_{\fP}{\lra} \Sel_{\mathcal{F}_{\fP}}(K,T_{\fP}/p^kT_{\fP})
\end{equation*}
is injective, where the Selmer structure on $T_{\fP}/p^kT_{\fP}$ is induced from the Selmer structure on $T_{\fP}$ (see Remark \ref{Note}).
\end{lemma}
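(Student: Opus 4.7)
The plan is to apply the snake lemma to the $p^k$-multiplication sequence $0\to T_{\fP}\xrightarrow{p^k}T_{\fP}\to T_{\fP}/p^kT_{\fP}\to 0$ on Galois cohomology, and then descend from $H^1$ to Selmer groups by a local torsion-freeness argument. The induced long exact sequence gives
\begin{equation*}
H^0(K,T_{\fP}/p^kT_{\fP})\lra H^1(K,T_{\fP})/p^kH^1(K,T_{\fP})\lra H^1(K,T_{\fP}/p^kT_{\fP}),
\end{equation*}
so the first job is to prove $H^0(K,T_{\fP}/p^kT_{\fP})=0$. This yields an injection $H^1(K,T_{\fP})/p^kH^1(K,T_{\fP})\hookrightarrow H^1(K,T_{\fP}/p^kT_{\fP})$, and a parallel analysis at each place $w$ will let us promote this to the desired Selmer-group injection.

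For the $H^0$ vanishing, I would use that $\Gamma^-$ is pro-$p$, so the character $\alpha:\Gamma^-\to\OO_{\fP}^{\times}$ has image inside $1+\mm_{\fP}$. Consequently, the reduction of the twist modulo $\mm_{\fP}$ is trivial, and one has an identification $T_{\fP}/(p,\mm_{\fP})T_{\fP}\cong E[p]\otimes_{\mathbb{F}_p}k_{\fP}$ carrying the \emph{untwisted} $G_K$-action. Hypothesis \ref{hyp-N-minus}(ii) gives $(T_{\fP}/(p,\mm_{\fP})T_{\fP})^{G_K}=E[p]^{G_K}\otimes k_{\fP}=0$, and a standard devissage using the filtrations by $\mm_{\fP}^{j}/\mm_{\fP}^{j+1}$ and $p^{i}/p^{i+1}$ promotes this to $H^0(K,T_{\fP}/p^kT_{\fP})=0$.

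Finally, to descend to Selmer groups: suppose $c\in S_{\fP}$ maps to $0$ in $\Sel_{\mathcal{F}_{\fP}}(K,T_{\fP}/p^kT_{\fP})$. Then $c$ vanishes in $H^1(K,T_{\fP}/p^kT_{\fP})$, and by the above injection we may write $c=p^kc'$ for some $c'\in H^1(K,T_{\fP})$. It suffices to show $c'\in S_{\fP}$, for then $c\in p^kS_{\fP}$. The point is that $H^1_{\mathcal{F}_{\fP}}(K_w,T_{\fP})$ is, by its construction in the previous subsection, the preimage of $H^1_{\mathcal{F}_{\fP}}(K_w,V_{\fP})$ under $H^1(K_w,T_{\fP})\to H^1(K_w,V_{\fP})$; hence the quotient $H^1(K_w,T_{\fP})/H^1_{\mathcal{F}_{\fP}}(K_w,T_{\fP})$ embeds into the $L_{\fP}$-vector space $H^1(K_w,V_{\fP})/H^1_{\mathcal{F}_{\fP}}(K_w,V_{\fP})$ and is therefore $\OO_{\fP}$-torsion-free. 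Since $p^k\loc_w(c')=\loc_w(c)\in H^1_{\mathcal{F}_{\fP}}(K_w,T_{\fP})$, this forces $\loc_w(c')\in H^1_{\mathcal{F}_{\fP}}(K_w,T_{\fP})$ at every $w$, so $c'\in S_{\fP}$. The subtlest step is the $H^0$ vanishing in the twisted setting: the relaxed hypothesis $(E[p])^{G_K}=0$ alone does not immediately kill invariants of a twisted module, but the pro-$p$ nature of $\Gamma^-$ collapses the twist modulo $\mm_{\fP}$ and reduces the question exactly to the given hypothesis.
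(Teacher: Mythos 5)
The paper supplies no proof of this lemma; it simply cites \cite[Lemma~3.7.1]{mr}, so the comparison is really against the argument in Mazur--Rubin. Your core argument --- write $c = p^k c'$ via the long exact sequence and then use the $\OO_\fP$-torsion-freeness of the local quotients $H^1(K_w,T_\fP)/H^1_{\cF_\fP}(K_w,T_\fP)$, coming from the preimage construction out of $V_\fP$, to force $c'\in S_\fP$ --- is correct and is precisely the mechanism of \cite[Lemma~3.7.1]{mr}.

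However, the sequence you display is not exact as written, and the $H^0$-vanishing you center the proof on is in fact superfluous. In the long exact sequence attached to $0\to T_\fP\xrightarrow{p^k}T_\fP\to T_\fP/p^kT_\fP\to 0$, exactness at the second copy of $H^1(K,T_\fP)$ says directly that the kernel of $H^1(K,T_\fP)\to H^1(K,T_\fP/p^kT_\fP)$ equals $p^kH^1(K,T_\fP)$; thus $H^1(K,T_\fP)/p^kH^1(K,T_\fP)\hookrightarrow H^1(K,T_\fP/p^kT_\fP)$ holds unconditionally. The connecting map lands in $H^1(K,T_\fP)[p^k]$, not in the quotient $H^1(K,T_\fP)/p^kH^1(K,T_\fP)$, so the three-term sequence you wrote is not what the snake lemma produces, and the vanishing of $H^0(K,T_\fP/p^kT_\fP)$ plays no role in the injection you need. (Your observation that $\Gamma^-$ being pro-$p$ collapses the twist modulo $\mm_\fP$, so $(E[p])^{G_K}=0$ yields $H^0(K,T_\fP/p^kT_\fP)=0$, is a correct and useful remark --- it underlies the applicability of Lemma~\ref{sel[m]} and Proposition~\ref{Shap} in this twisted setting --- but it is not the engine of the present lemma.) Stripping out that step, your final descent paragraph is a complete and correct proof.
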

Let $k$, $j$ be positive integers such that $k \leq j$ and $u_k=\min\{\len(M_\nn^{(k)})\mid \nn\in\cN\}$ for Selmer groups over the Artinian ring $\OO_{\fP}/p^k{\OO_{\fP}}$ as defined in \ref{def-core}. Set
\begin{eqnarray*}
    \delta_{\fP}(k,j)&=& \min \{\ind({\pi}^{\varepsilon}\lambda_{\nn},\pi^{u_k}\OO_{\fP}/p^k{\OO_{\fP}})\mid \nn \in \mathcal{N}^{definite}_j\} \\
                    &=& \min \{\ind(\lambda_{\nn}, \pi^{u_k}\OO_{\fP}/p^k{\OO_{\fP}})\mid \nn \in \mathcal{N}^{definite}_j\}+\varepsilon
\end{eqnarray*}
Set $T^{(i)} =T_{\fP}/p^iT_{\fP}$ and $R^{(i)} = \OO_{\fP}/p^i{\OO_{\fP}}$ for any $i\in\N$.
\begin{remark}
Note that the non-trivial anticyclotomic character $\Gamma^-\lra\Z_p^\times$ acts on $T_\fP$, and hence on $W_\fP$.
Therefore the results in Sections \ref{section-key}, \ref{bipartite} and \ref{section-stub} are applicable to Selmer groups for $T^{(j)}$.
\end{remark}
Given the Selmer structure $\mathcal{F}_{\fP}$ on $T_{\fP}$, let $\mathcal{F}$ be the Selmer structure on $T^{(j)}$ given by Remark \ref{Note}. Similarly, we also denote the selmer structure induced from $\mathcal{F}_{\fP}$ to $W_{\fP}$ to $W_{\fP}[p^j]$ by $\cF_\fP$.
Note that $\varprojlim_{m} H^1(K_m,E[p^j])\cong H^1(K,E[p^j]{\otimes}\Lambda)$. Applying the maps
\begin{equation*}
\begin{split}
    \Lambda/p^j{\Lambda}&{\lra}R^{(j)}\\
    \varprojlim_{m} H^1(K_m,E[p^j]) &{\lra}H^1(K,T^{(j)})
\end{split}
\end{equation*}
to the \emph{Euler system} in \eqref{ES}, we get the pair of families
 \begin{equation}\label{NES}
     \{\ol\kappa_{\nn} \in \Sel_{\mathcal{F}({\nn})}(K,T^{(j)})| \nn \in \mathcal{N}^{indefinite}_j\} \quad
     \{\ol\lambda_{\nn}{\in}R^{(j)}|\nn \in \mathcal{N}^{definite}_j\}.
 \end{equation}
 By assumption and Lemma \ref{lambda} we have $\ol\kappa_1$ or $\ol\lambda_1$ is nonzero (where $1 \in \mathcal{N}_j$ is the empty product depending on whether $\epsilon(N^-)=1$ or $-1$).
 
 Fixing a uniformizer of $\OO_{\fP}$ we have an isomorphism $T^{(j)}{\cong}W_{\fP}[p^j]$, which gives us the isomorphism:
 \begin{equation}\label{Sel[p]}
     \Sel_{\mathcal{F}}(K,T^{(j)}){\cong}\Sel_{\mathcal{F_\fP}}(K,W_{\fP}[p^j]){\cong} \Sel_{\mathcal{F}_{\fP}}(K,W_{\fP})[p^j].
 \end{equation}
 where the last isomorphism follows from Lemma \ref{sel[m]}.
 \begin{lemma}
Consider the triple $(T^{(j)},\mathcal{F}, \mathcal{L}_j)$. Then we have $\mathcal{N}_j ={\mathcal{N}}^{odd}_j \bigsqcup {\mathcal{N}}^{even}_j$. This gives us
 \begin{equation*}
 {\mathcal{N}}^{odd}_j={\mathcal{N}}^{indefinite}_j,\quad {\mathcal{N}}^{even}_j={\mathcal{N}}^{definite}_j.
  \end{equation*}
  Furthermore, the families in \eqref{NES} form an \emph{Euler system of odd type} for $(T^{(j)},\mathcal{F},\mathcal{L}_j)$.
 \end{lemma}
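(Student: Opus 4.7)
The plan is to prove the lemma in two steps: first establish the parity identifications $\mathcal{N}^{odd}_j = \mathcal{N}^{indefinite}_j$ and $\mathcal{N}^{even}_j = \mathcal{N}^{definite}_j$, and then derive the two reciprocity laws of Definition~\ref{BES} from \eqref{rec1} and \eqref{rec2}.

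For the parity matching, I would argue by induction on the number of prime factors of $\nn$. Every $\fl \in \mathcal{L}_j$ sits over an inert rational prime $\ell$, so $\epsilon(\ell) = -1$, and passing from $\nn$ to $\nn\fl$ switches $\epsilon(nN^-)$, toggling the vertex between the definite and indefinite families. Simultaneously, Proposition~\ref{selmer-facts}(iii) shows that adding an admissible prime changes $\rho(\nn)$ by $\pm 1$, and the congruence $e(\nn) \equiv \rho(\nn) \pmod{2}$ flips $e(\nn)$ between $0$ and $1$. Since both partitions are reversed by the same edge operation, it suffices to check the base vertex $\nn = 1$: when $\epsilon(N^-) = 1$ the nonzero class $\ol\kappa_1$ forces a free rank-one summand in $\Sel_{\mathcal{F}(1)}(K, T^{(j)})$ via Proposition~\ref{selmer-facts}(i), giving $e(1) = 1$; when $\epsilon(N^-) = -1$ only $\ol\lambda_1$ exists and the self-duality of $\mathcal{F}$ forces $\Sel_{\mathcal{F}(1)} \cong M_1 \oplus M_1$, giving $e(1) = 0$.

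For the reciprocity laws, fix $\nn\fl \in \mathcal{N}^{indefinite}_j = \mathcal{N}^{odd}_j$. Applying Shapiro's lemma the isomorphism \eqref{rec1} becomes an isomorphism of $\Lambda$-modules $H^1_{ord}(K_\fl, E[p^j] \otimes \Lambda) \cong \Lambda/p^j\Lambda$ carrying $\loc_\fl(\kappa_{\nn\fl})$ to $\lambda_\nn$. Tensoring along $\Lambda/p^j\Lambda \twoheadrightarrow R^{(j)}$ via the map $E[p^j]\otimes \Lambda \to T^{(j)}$ (which realizes the twist by the character $\alpha$) yields an isomorphism $H^1_{ord}(K_\fl, T^{(j)}) \cong R^{(j)}$ sending $\loc_\fl(\ol\kappa_{\nn\fl})$ to $\ol\lambda_\nn$. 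This produces the first reciprocity law $R^{(j)}/(\ol\lambda_\nn) \cong H^1_{ord}(K_\fl, T^{(j)})/R^{(j)} \cdot \loc_\fl(\ol\kappa_{\nn\fl})$. The analogous argument applied to \eqref{rec2}, with $H^1_{unr}$ replacing $H^1_{ord}$, gives the second reciprocity law for $\nn\fl \in \mathcal{N}^{even}_j$.

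The main obstacle is the base case $e(1) = 1 \iff \epsilon(N^-) = 1$. In the setting $(E[p])^{G_K} = 0$ without irreducibility, the parity of the Selmer corank is controlled by the sign of the functional equation through the self-duality hypothesis on $\mathcal{F}$, but ensuring this holds at the Artinian reduction $T^{(j)}$ (rather than just at the $\Lambda$-level modules $\cS$ and $X$) requires a careful compatibility check between Shapiro's isomorphism, the twist by $\alpha$, and the Cassels--Tate type pairing underlying Proposition~\ref{selmer-facts}. Once this parity matching is secured, the reciprocity laws follow by a straightforward diagram chase through reduction modulo $(\fP, p^j)$.
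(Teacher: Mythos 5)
Your induction step is sound: admissible primes $\fl$ lie over inert rational primes, so multiplying by $\fl$ toggles $\epsilon(nN^-)$, and Proposition~\ref{selmer-facts}(iii) shows it simultaneously toggles the parity of $e(\nn)$; this reduces everything to a single base vertex. The reciprocity-law reduction via Shapiro's lemma and the quotient $\Lambda/p^j\Lambda \to R^{(j)}$ is also correct.

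The gap is in the base case. You claim that when $\epsilon(N^-)=1$ the nonvanishing of $\ol\kappa_1$ forces a free rank-one summand in $\Sel_{\cF(1)}(K,T^{(j)})$, hence $e(1)=1$, and that when $\epsilon(N^-)=-1$ self-duality alone forces $\Sel_{\cF(1)} \cong M_1 \oplus M_1$, hence $e(1)=0$. Neither inference is valid. In the decomposition $\Sel_{\cF(\nn)} \cong A^{e(\nn)} \oplus M_\nn \oplus M_\nn$ from Proposition~\ref{selmer-facts}(i), a nonzero class can perfectly well live inside the torsion block $M_\nn \oplus M_\nn$, so $\ol\kappa_1 \neq 0$ does not entail $e(1)=1$; and self-duality is exactly what produces the decomposition with $e(\nn) \in \{0,1\}$ in both cases — it does not by itself pick out $e(1)=0$. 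Nothing in your argument distinguishes the two candidate labelings $\{\text{indefinite}=\text{odd}\}$ from $\{\text{indefinite}=\text{even}\}$.

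The route that actually closes this (and the one the paper intends, by citing Howard's Lemma~3.3.5) is to reverse your ordering: first derive the reciprocity laws, as you do, which shows the families form a bipartite Euler system for \emph{some} consistent parity labeling — either of odd type (if indefinite $=$ odd) or of even type (if indefinite $=$ even). Then invoke the assumed nontriviality ($\ol\kappa_1$ or $\ol\lambda_1$ nonzero, secured just above the lemma via Lemma~\ref{lambda}) together with Proposition~\ref{even type}, which rules out nontrivial even-type Euler systems when $\len(A)>\varepsilon_0$. This forces the odd-type identification $\mathcal{N}^{odd}_j = \mathcal{N}^{indefinite}_j$, $\mathcal{N}^{even}_j = \mathcal{N}^{definite}_j$, without ever computing $e(1)$ directly. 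Your last paragraph gestures at the obstacle honestly, but the proposed fix (a "careful compatibility check" between Shapiro, the twist, and Cassels--Tate) is not where the resolution lies; the resolution is the no-nontrivial-even-type argument.
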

 \begin{proof} The proof goes in the same way as mentioned in \cite[lemma 3.3.5]{hbes}.
\end{proof}
As in \cite{hbes}, the Euler system in equation \eqref{NES} for $(T^{(k)},\mathcal{F}, \mathcal{L}_k)$ may not be free, but this can be obtained by shrinking the set of indexing primes $\mathcal{L}_k$. With suitable modifications, the proof of the results of Howard now goes through.
\begin{lemma}
For any $j{\geq}2k$ and $k>\varepsilon_0$ the families 
\begin{equation*}
     \{\ol{\kappa_{\nn}} \in \Sel_{\mathcal{F}({\nn})}(K,T^{(k)})|\nn \in \mathcal{N}^{indefinite}_j\} \quad\mbox{and}\quad
     \{\ol{\lambda_{\nn}} \in R^{(k)}|\nn \in \mathcal{N}^{definite}_j\}
 \end{equation*}
 form a free Euler system of odd type for $(T^{(k)},\mathcal{F}, \mathcal{L}_j)$.
\end{lemma}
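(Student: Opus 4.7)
The plan is to verify three properties: (a) the reduced families $\{\ol\kappa_\nn\}$ and $\{\ol\lambda_\nn\}$ land in the claimed modules; (b) they form an Euler system of odd type satisfying both reciprocity laws; and (c) the family $\{\ol\kappa_\nn\}$ is free. I expect (a) and (b) to be routine. The $G_K$-equivariant surjection $T^{(j)} \twoheadrightarrow T^{(k)}$ and the corresponding $R^{(j)} \twoheadrightarrow R^{(k)}$ induce reduction maps on Selmer groups and on scalars which respect the induced Selmer structure $\cF(\nn)$ (Remark \ref{Note}), giving (a). For (b), the reciprocity isomorphisms of \eqref{rec1}--\eqref{rec2}, upon reduction modulo $p^k$ and tensoring with $\OO_\fP$ through the character $\alpha$, specialize to $H^1_{ord}(K_\fl,T^{(k)}) \cong R^{(k)}$ and $H^1_{unr}(K_\fl,T^{(k)}) \cong R^{(k)}$; the image-of-$\loc_\fl$ identities transfer through these reductions, and combined with the earlier equality $\cN^{odd}_j = \cN^{indefinite}_j$ this yields an Euler system of odd type for $(T^{(k)},\cF,\cL_j)$.

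The real content is (c). My plan is to apply Lemma \ref{sel[m]} over the principal Artinian ring $R^{(j)}$, for which Hypothesis \ref{key-hypothesis}(i) holds since $(E[p])^{G_K}=0$, to obtain the cartesian identification
\begin{equation*}
\Sel_{\cF(\nn)}(K,T^{(k)}) \;\cong\; \Sel_{\cF(\nn)}(K,T^{(j)})[p^k],
\end{equation*}
realized concretely by $T^{(j)}/p^k \to T^{(j)}$, $\ol t \mapsto p^{j-k}t$. Under this identification, the reduced class $\ol\kappa_\nn$ corresponds to $p^{j-k} \cdot \ol\kappa_\nn^{(j)}$, where $\ol\kappa_\nn^{(j)}$ denotes the level-$j$ Euler system class in $\Sel_{\cF(\nn)}(K,T^{(j)})$. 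Writing $s = \ord_{R^{(j)}}(\ol\kappa_\nn^{(j)})$, I split into two cases. If $s \leq j-k$, then $\ol\kappa_\nn = 0$, which lies in the free rank-one summand of $\Sel_{\cF(\nn)}(K,T^{(k)})$ guaranteed by $e(\nn)=1$ (Proposition \ref{selmer-facts}\ref{structure}). Otherwise $s > j-k$; the assumption $j \geq 2k$ forces $s > k$, so $y := p^{s-k} \cdot \ol\kappa_\nn^{(j)}$ has order exactly $k$ in $\Sel_{\cF(\nn)}(K,T^{(j)})$, making $R^{(j)} \cdot y \subset \Sel_{\cF(\nn)}(K,T^{(j)})[p^k]$ a cyclic submodule of order $p^k$ --- equivalently, $R^{(k)}$-free of rank one --- and the computation $p^{j-s} \cdot y = p^{j-k} \cdot \ol\kappa_\nn^{(j)} = \ol\kappa_\nn$ exhibits $\ol\kappa_\nn$ inside it. Either way, $\ol\kappa_\nn$ lies in a cyclic $R^{(k)}$-free rank-one submodule, which is the desired freeness.

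The main obstacle is unraveling how the natural reduction $\Sel_{\cF(\nn)}(K,T^{(j)}) \to \Sel_{\cF(\nn)}(K,T^{(k)})$ corresponds, via the cartesian identification of Lemma \ref{sel[m]}, to multiplication by $p^{j-k}$ inside $\Sel_{\cF(\nn)}(K,T^{(j)})$. Once this compatibility is in place, the threshold $j \geq 2k$ is precisely what guarantees $s > k$ whenever $\ol\kappa_\nn \neq 0$, enabling the construction of a cyclic free rank-one submodule containing $\ol\kappa_\nn$ and explaining why shrinking the indexing set from $\cL_k$ to $\cL_j$ is the right move to repair freeness.
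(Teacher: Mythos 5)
Your route to freeness in part (c) is genuinely different from the paper's. The paper works with the decompositions $\Sel_{\cF(\nn)}(K,T^{(j)})\cong R^{(j)}\oplus M\oplus M$ and $\Sel_{\cF(\nn)}(K,T^{(k)})\cong R^{(k)}\oplus N\oplus N$, splits on whether $\mm^{ek-1}N$ vanishes (disposing of one case via Proposition~\ref{annihilate}), and in the remaining case combines the commutative triangle from Lemma~\ref{sel[m]} with $j\geq 2k$ to show that the vertical reduction map annihilates $M\oplus M$, so that its image is precisely the free rank-one image of the $R^{(j)}$-summand, which contains $\ol\kappa_\nn$. You bypass $M$, $N$, and the appeal to Proposition~\ref{annihilate} altogether by tracking the annihilator of the single class $\ol\kappa_\nn^{(j)}$; that is a legitimate and arguably cleaner strategy.

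There is, however, a genuine gap. You take $s$ to be the $p$-order of $\ol\kappa_\nn^{(j)}$ and conclude that $R^{(j)}y$, with $y=p^{s-k}\ol\kappa_\nn^{(j)}$, is ``a cyclic submodule of order $p^k$ --- equivalently, $R^{(k)}$-free of rank one.'' These are not equivalent once the ramification index $e$ of $\OO_\fP$ over $\Z_p$ exceeds $1$: a cyclic $R^{(j)}$-module killed by $p^k$ but not $p^{k-1}$ can have $\pi$-length anywhere in the range $e(k-1)+1$ up to $ek$, while $R^{(k)}$-freeness of rank one forces the $\pi$-length to be exactly $\len(R^{(k)})=ek$. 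Note that the paper sets up a uniformizer $\pi$ of $\OO_\fP$ and phrases the whole computation in terms of $\mm^{ek-1}$ and $\pi^{e(j-k)}$ precisely because of this subtlety.

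The fix retains your structure but replaces $p$-orders by $\pi$-orders in the sense of Definition~\ref{ord-exp-ind}. Set $\tau=\ord(\ol\kappa_\nn^{(j)})$, i.e., the smallest $m$ with $\pi^{m}\ol\kappa_\nn^{(j)}=0$. Since the cartesian identification intertwines reduction with multiplication by $\pi^{e(j-k)}$ (up to a unit), if $\tau\leq e(j-k)$ then $\ol\kappa_\nn=0$ and the free summand guaranteed by $e(\nn)=1$ suffices. Otherwise $\tau>e(j-k)\geq ek$ (this is where $j\geq 2k$ enters), and $z:=\pi^{\tau-ek}\ol\kappa_\nn^{(j)}$ has $\pi$-order exactly $ek$, so $R^{(j)}z\cong R^{(j)}/\mm^{ek}=R^{(k)}$ is free of rank one over $R^{(k)}$; finally $\ol\kappa_\nn$ is, up to a unit, $\pi^{ej-\tau}z$, which lies in $R^{(j)}z$ since $ej-\tau\geq 0$.
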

\begin{proof}
The proof of this result is similar to that of \cite[3.3.6]{hbes}.
Let $\nn \in \mathcal{N}^{indefinite}_j$ and $j \geq 2k$. 
By Proposition \ref{selmer-facts}\ref{structure}, we have 
\begin{equation*}
    \Sel_{\mathcal{F}(\nn)}(K,T^{(j)}) \cong R^{(j)}{\oplus}M\oplus M \quad\mbox{and }
     \Sel_{\mathcal{F}(\nn)}(K,T^{(k)}) \cong R^{(k)} \oplus N\oplus N.
\end{equation*}
For the maximal ideal $\mm$ of $\OO_{\fP}$, we fix a uniformizer $\pi$. Let $e$ be the ramification degree of $\OO_{\fP}$. Then the length of $R^{(k)}$ is $ek$. If ${\mm}^{ek-1}N \neq 0$ then we have $\ol{\kappa_{\nn}}=0$ by Proposition \ref{annihilate}, so there is nothing to prove. Suppose that ${\mm}^{ek-1}N=0$. By Lemma \ref{sel[m]} we have  the commutative diagram
\begin{equation*}
  \begin{tikzcd}
    \Sel_{\mathcal{F}({\nn})}(K,T^{(j)}) \arrow{r}{{\pi}^{e(j-k)}} \arrow{d}   & \Sel_{\mathcal{F}({\nn})}(K,T^{(j)})[{\mm}^{ek}] \\
    \Sel_{\mathcal{F}({\nn})}(K,T^{(k)}) \arrow{ur}{\cong}&
  \end{tikzcd}
\end{equation*}
The diagonal isomorphism shows that ${\mm}^{ek-1}.\Sel_{\mathcal{F}({\nn})}(K,T^{(j)})[{\mm}^{ek}]$ is a cyclic module, and this implies ${\mm}^{ek-1}M=0$. But since $j \geq 2k$ the image of $M$ under the vertical arrow is zero. Therefore the image of the vertical arrow is free of rank one, and also $\ol{\kappa_{\nn}}$ is contained in the image. This completes the proof.
\end{proof}
To prove the main Theorem \ref{Main Theorem}, we first show the proposition below. Note that we get an error term, which does not appear in the irreducible case.
 \begin{prop}\label{Sel length}
Let $\epsilon(N^-)=-1$ and assume that $\lambda^{\infty} \in \Lambda$ has nontrivial image in $\OO_{\fP}/p^k{\OO_{\fP}}$. Then 
\begin{equation*}
    \len_{\OO_{\fP}}(\Sel_{\mathcal{F}_{\fP}}(K,W_{\fP}))+2\delta_{\fP}(k+\varepsilon)
    =2.\len_{\OO_{\fP}}(\OO_{\fP}/{\OO_{\fP}{\lambda}^{\infty}})+2\varepsilon.
\end{equation*}
Let $\epsilon(N^-)=1$ and assume that $\kappa^{\infty}$ has nontrivial image in $S_{\fP}/p^kS_{\fP}$. Then 
\begin{enumerate}
    \item $S_{\fP}$ is a free $\OO_{\fP}$-module of rank one,
    \item $\Sel_{\mathcal{F}_{\fP}}(K,W_{\fP})$ has $\OO_{\fP}$-corank one, and
    \item $\len_{\OO_{\fP}}(\Sel_{\mathcal{F}_{\fP}}(K,W_{\fP})/
    {\Sel_{\mathcal{F}_{\fP}}(K,W_{\fP})_{\operatorname{div}}})+2\delta_{\fP}(k+\varepsilon)
    =2.\len_{\OO_{\fP}}(S_{\fP}/\OO_{\fP}{\kappa}^{\infty})+2\varepsilon$, where the subscript $\operatorname{div}$ indicates the the maximal $\OO_{\fP}$-divisible submodule of $\Sel_{\mathcal{F}_{\fP}}(K,W_{\fP})$.
\end{enumerate}
\end{prop}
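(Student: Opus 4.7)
The proof will run parallel to Howard's proof of \cite[Prop.\ 3.3.7]{hbes}, with the key technical modification being the bookkeeping of the error term $\varepsilon$ arising from Theorem \ref{bound} and the minimal Selmer length $u_k$ from Definition \ref{def-core}. I would fix $k > \varepsilon_0$ and $j \geq 2k$, so that by the preceding lemma the pair $\{\ol\kappa_\nn, \ol\lambda_\nn\}$ is a \emph{free} Euler system of odd type for $(T^{(k)},\cF,\cL_j)$. Applying the Rigidity Theorem \ref{Rigidity}, I obtain a unique integer $\delta$, independent of $\nn$, such that $\pi^\varepsilon \ol\lambda_\nn$ generates $\mm^\delta \stub_\nn$ at every even vertex and $\pi^\varepsilon \ol\kappa_\nn$ generates $\mm^\delta \stub_\nn$ at every odd vertex; taking the minimum over definite $\nn \in \cN_j$ identifies $\delta$ with $\delta_\fP(k,j)$.

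For the definite case $\epsilon(N^-) = -1$, the empty product $\nn = 1$ lies in $\cN^{even}_j$, so Proposition \ref{selmer-facts}(i) gives $\Sel_{\cF(1)}(K,T^{(k)}) \cong M_1 \oplus M_1$ with $e(1)=0$, and the Rigidity identity reads
\[
\varepsilon + \len_{R^{(k)}}\bigl(R^{(k)}/\ol\lambda_1 R^{(k)}\bigr) \;=\; \ind(\pi^\varepsilon \ol\lambda_1, R^{(k)}) \;=\; \delta + \len(M_1).
\]
Using the isomorphism \eqref{Sel[p]}, $M_1 \oplus M_1 \cong \Sel_{\cF_\fP}(K,W_\fP)[p^k]$, and since $\lambda^\infty \neq 0$ forces finite length of $\Sel_{\cF_\fP}(K,W_\fP)$ (via the companion bound from Theorem \ref{bound}), for $k$ large the $p^k$-torsion captures the whole module, giving $2\len(M_1) = \len_{\OO_\fP}(\Sel_{\cF_\fP}(K,W_\fP))$. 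Combining this with $\len(R^{(k)}/\ol\lambda_1) = \len_{\OO_\fP}(\OO_\fP/\lambda^\infty \OO_\fP)$ for $k$ sufficiently large and rearranging yields the first claimed equality.

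For the indefinite case $\epsilon(N^-) = 1$, the empty product $\nn = 1$ lies in $\cN^{odd}_j$, so Proposition \ref{selmer-facts}(i) gives $\Sel_{\cF(1)}(K,T^{(k)}) \cong R^{(k)} \oplus M_1 \oplus M_1$. Injectivity of $\cS/\fP\cS \hookrightarrow \Sel_{\cF_\fP}(K,T_\fP)$ from Proposition \ref{Shap} together with $\kappa^\infty \neq 0$ forces $S_\fP$ to be nonzero; combined with Lemma \ref{torsion-free} (torsion freeness of $\cS$) and the cartesian structure inherited from Hypothesis \ref{key-hypothesis}, $S_\fP$ is $\OO_\fP$-free of rank one, giving (i). The Pontryagin dual map from Proposition \ref{Shap} and the standard length identity $\Sel_{\cF_\fP}(K,W_\fP)[p^k]/\text{div} \cong M_1 \oplus M_1$ (up to bounded error) then supplies the corank statement (ii). Finally, the Rigidity identity at $\nn = 1$ reads $\pi^\varepsilon \ol\kappa_1 \in \mm^{\delta+\len(M_1)}\Sel_{\cF(1)}(K,T^{(k)})$; comparing $\ind(\ol\kappa_1, \Sel_{\cF(1)}(K,T^{(k)}))$ with $\len(S_\fP/\OO_\fP\kappa^\infty)$ for large $k$ (using injectivity of the specialization map from Lemma \ref{lambda}), and matching $2\len(M_1)$ with the length of the non-divisible part, yields (iii).

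The main obstacle is the careful length bookkeeping at the boundary: ensuring that multiplication by $\pi^\varepsilon$ does not shift lengths in a way that breaks the identity; controlling the kernel and cokernel of the Shapiro-type maps of Proposition \ref{Shap} so that passing to sufficiently large $k$ (depending only on $[\OO_\fP:\Lambda/\fP]$) recovers a clean equality with precisely the error term $2\varepsilon$; and, in the indefinite case, correctly isolating the free summand $R^{(k)}$ (which corresponds to $S_\fP/p^k S_\fP$) from the torsion summand $M_1 \oplus M_1$ (which corresponds to the non-divisible part of $\Sel_{\cF_\fP}(K,W_\fP)$) under the isomorphism \eqref{Sel[p]}. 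This last identification is where the interplay between the reducible residual representation hypothesis $(E[p])^{G_K}=0$ and Howard's original argument is most delicate, since $u_k$ need not vanish, forcing us to work with $\delta_\fP(k,j) = \ind(\pi^\varepsilon\ol\lambda_\nn, \mm^{u_k} R^{(k)})$ rather than an index in $R^{(k)}$ itself.
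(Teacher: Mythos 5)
Your proposal correctly identifies the high-level strategy (specialize the Euler system to finite level, apply the Rigidity Theorem at the empty product, and transfer lengths via the Shapiro maps and \eqref{Sel[p]}), but there is a genuine gap in your handling of the modulus, which is precisely where the new error term $\varepsilon$ must be absorbed.

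You fix $j \geq 2k$ and work with $T^{(k)}$, then write the Rigidity identity at level $k$ as
\[
\varepsilon + \len_{R^{(k)}}\bigl(R^{(k)}/\ol\lambda_1 R^{(k)}\bigr) = \ind(\pi^\varepsilon \ol\lambda_1, R^{(k)}) = \delta + \len(M_1).
\]
The hypothesis only guarantees that $\ol\lambda_1 \neq 0$ in $R^{(k)} = \OO_\fP/p^k\OO_\fP$. It does \emph{not} guarantee that $\pi^\varepsilon\ol\lambda_1 \neq 0$ there: if $v(\lambda^\infty)$ is close to $\len(R^{(k)})$, multiplying by $\pi^\varepsilon$ kills the class, making $\ind(\pi^\varepsilon\ol\lambda_1, R^{(k)}) = \infty$ while the left-hand side stays finite. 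Your first equality then fails, and the Rigidity Theorem (which requires $\pi^\varepsilon\lambda_\nn \neq 0$) cannot be invoked. The paper resolves this by lifting to $T^{(k+\varepsilon)}$ with $j \geq 2(k+\varepsilon)$: nontriviality of $\lambda^\infty$ modulo $p^k$ \emph{does} imply nontriviality of $\pi^\varepsilon\lambda^\infty$ modulo $p^{k+\varepsilon}$ (since the ramification degree $e \geq 1$ gives $v + \varepsilon < ek + e\varepsilon$). This is also what produces the quantity $\delta_\fP(k+\varepsilon,j)$ appearing in the statement --- your argument would only yield $\delta_\fP(k,j)$, which is not what the proposition asserts.

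A secondary point: your passage from $\len(M_1)$ at finite level to $\len_{\OO_\fP}(\Sel_{\cF_\fP}(K,W_\fP))$ --- ``for $k$ large the $p^k$-torsion captures the whole module'' --- conceals the actual mechanism. The paper shows via the index bound that $\mm^{k-1}M = 0$, then applies Proposition \ref{annihilate} iteratively together with \eqref{Sel[p]} to obtain the stationary chain $\Sel_{\cF_\fP}(K,W_\fP)[p^k] = \Sel_{\cF_\fP}(K,W_\fP)[p^{k-1}] = \cdots = \Sel_{\cF_\fP}(K,W_\fP)[p^{\varepsilon_0+1}]$, which is what forces $\Sel_{\cF_\fP}(K,W_\fP)$ to be killed by a fixed power of $p$. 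Similarly, in the indefinite case your appeal to ``$\Lambda$-torsion-freeness of $\cS$ and the cartesian structure'' to deduce that $S_\fP$ is $\OO_\fP$-free of rank one does not directly engage with the object $S_\fP = \Sel_{\cF_\fP}(K,T_\fP)$; the paper instead identifies $S_\fP \cong \varprojlim_k \Sel_{\cF_\fP}(K,W_\fP)[p^k]$ and reads off rank one from the bounded torsion part.
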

\begin{proof}
Let us fix $j \geq 2(k+\varepsilon)$. The empty product lies in $\mathcal{N}^{definite}_j$ if and only if $\epsilon(N^-)=-1$, and in which case, $\Sel_{\mathcal{F}}(K,T^{(k+\varepsilon)})\cong M\oplus M$. Since image of $\lambda^\infty$ is non-trivial in $\OO_
\fP/p^k\OO_\fP$, so $\pi^\varepsilon\lambda^\infty$ has non-trivial image in $\OO_\fP/p^{k+\varepsilon}\OO_\fP$. By Theorem \ref{Rigidity} with $\nn =1$, we have $\langle\pi^\varepsilon\ol\lambda_\nn\rangle=\mm^{\delta_\fP(k+\varepsilon,j)+\len_{\OO_{\fP}}(M)}R^{(k+\varepsilon)}$. Therefore,
\begin{eqnarray*}
    \len_{\OO_{\fP}}(M)+ \delta_{\fP}(k+\varepsilon,j)= \ind({\pi}^{\varepsilon}\ol{\lambda_1},R^{(k+\varepsilon)})=& \ind({\pi}^{\varepsilon}{\lambda}^{\infty}, \OO_{\fP}/p^{k+\varepsilon}{\OO_{\fP}})\\
    =& \ind({\lambda}^{\infty}, \OO_{\fP}/p^{k+\varepsilon}{\OO_{\fP}})+\varepsilon.
\end{eqnarray*}
Since the right hand side is $\leq k-1$, by equation \eqref{Sel[p]} and Proposition \ref{annihilate} we have
\begin{equation*}
     \Sel_{\mathcal{F}_\fP}(K,T^{(k)}){\cong}\Sel_{\mathcal{F}_\fP}(K,W_{\fP}[p^{k}]){\cong} \Sel_{\mathcal{F}_{\fP}}(K,W_{\fP})[p^{k}]=\Sel_{\mathcal{F}_{\fP}}(K,W_{\fP})[p^{k-1}]
\end{equation*}
Repeatedly applying Proposition \ref{annihilate} and equation \eqref{Sel[p]}, we have
\begin{equation*}
    \Sel_{\mathcal{F}_{\fP}}(K,W_{\fP})[p^{k}]=\Sel_{\mathcal{F}_{\fP}}(K,W_{\fP})[p^{k-1}]=...=\Sel_{\mathcal{F}_{\fP}}(K,W_{\fP})[p^{\varepsilon_0+1}],
\end{equation*}
since any $x\in \Sel_{\mathcal{F}_{\fP}}(K,W_{\fP})$ is killed by $p^{k-1}$ for $k>\varepsilon_0$. 
As $\Sel_{\cF_\fP}(K,W_{\fP})=\cup_{k>\varepsilon_0}\Sel_{\cF_\fP}(K,W_{\fP})[p^k]$, it follows that $\Sel_{\mathcal{F}_{\fP}}(K,W_{\fP})$ is annihilated by a power of $p$, and 
$\len(\Sel_{\mathcal{F}_{\fP}}(K,W_{\fP}))= \len(M \oplus M)$. Thus,
\begin{equation*}
    \len_{\OO_{\fP}}(\Sel_{\mathcal{F}_{\fP}}(K,W_{\fP}))+ 2.{\delta}_{\fP}(k+\varepsilon,j)= 2.\len_{\OO_{\fP}}(\OO_{\fP}/\OO_{\fP}{\lambda}^{\infty})+2\varepsilon.
\end{equation*}
Now let $\epsilon(N^-)=1$. Then $\Sel_{\mathcal{F}}(K,T^{(k+\varepsilon)})\cong R^{(k+\varepsilon)} \oplus M\oplus M$, applying Theorem \ref{Rigidity} we have
\begin{equation*}
    \len_{\OO_{\fP}}(M)+ \delta_{\fP}(k+\varepsilon,j)= \ind(\pi^\varepsilon\ol{\kappa_1},\Sel_{\mathcal{F}}(K,T^{(k+\varepsilon)})) \leq k-\varepsilon.
\end{equation*}
Again as above we have $\len_{\OO_{\fP}}(M) \leq k-1$. 
Combining this with equation \eqref{Sel[p]}, we have
\begin{equation*}
    S_{\fP}\cong\varprojlim_{k} \Sel_{\mathcal{F}_{\fP}}(K,W_{\fP})[p^k]
\end{equation*}
which is a torsion-free rank one $\OO_{\fP}$-module. Applying Lemma \ref{lambda} we know that the reduction map
\begin{equation*}
    S_{\fP}/p^kS_{\fP} {\lra} \Sel_{\mathcal{F}}(K,T^{(k)})
\end{equation*}
is injective. Theorem \ref{Rigidity} now gives
\begin{equation*}
\begin{split}
    \len_{\OO_{\fP}}(\Sel_{\mathcal{F}_{\fP}}(K,W_{\fP})/\Sel_{\mathcal{F}_{\fP}}(K,W_{\fP})_{\operatorname{div}})
    + 2.{\delta}_{\fP}(k+\varepsilon,j)
                               &= \len_{\OO_{\fP}}(M \oplus M)+ 2{\delta}_{\fP}(k+\varepsilon,j)\\
    = 2.\ind(\ol{\kappa_1}, \Sel_{\mathcal{F_\fP}}(K,T^{(k+\varepsilon)}))+2\varepsilon
                               &= 2.\len_{\OO_{\fP}}(S_{\fP}/S_{\fP}{\kappa}^{\infty})+2\varepsilon.
    \end{split}
\end{equation*}
Now taking $j \to \infty$ we get the result. This proves the proposition.
\end{proof}

\subsection{Proof of Theorem \ref{Main Theorem}}\label{main proof}
Let $\lambda^{\infty}$ or $\kappa^{\infty}$ be nonzero,
accordingly as $\epsilon(N^-)=-1$ or $\epsilon(N^-)=1$. By
Lemma \ref{torsion-free}, $\cS$ is finitely generated
torsion-free $\Lambda$-module. If $\epsilon(N^-)=1$ then the image of
$\kappa^{\infty}$ in $\cS/{\fP}\cS$ is nonzero for all but finitely many
height one prime ideals $\fP$ in $\Lambda$. Similar arguments hold for
$\lambda^{\infty}$ if $\epsilon(N^-)=-1$. So let $\Sigma_{\Lambda}$ be a
finite set of prime ideals of $\Lambda$ large enough that it contains
$p{\Lambda}$ and all the prime divisors of the characteristic ideal of
the torsion submodule $X$, and large enough such that the image of the
distinguished elements in \eqref{distinguished} has nonzero image in
$\cS/{\fP}\cS$ or $\Lambda/{\fP}{\Lambda}$ for all $\fP \notin
\Sigma_{\Lambda}$.

(i) We fix $\fP \notin \Sigma_\Lambda$ and suppose $\epsilon(N^-)=1$. In this case, it follows from Proposition \ref{lambda} that $\kappa^{\infty}$ has nonzero image in $\Sel_{\mathcal{F}_{\fP}}(K,T_{\fP})$. By Proposition \ref{Sel length}, the rank of  $\Sel_{\mathcal{F}_{\fP}}(K,T_{\fP})$ is one and the corank of  $\Sel_{\mathcal{F}_{\fP}}(K,W_{\fP})$ is also one as $\OO_\fP$-modules. Since $\cS$ is torsion-free, it follows from Proposition \ref{Shap}, that
\begin{equation*}
    \operatorname{rank}_{\Lambda}\cS =\operatorname{rank}_{\OO_{\fP}}(\cS \otimes\OO_{\fP})=1.
\end{equation*}
For $X$, by Proposition \ref{Shap}, the map $\Sel_{\mathcal{F}_{\fP}}(K,W_{\fP})\lra\Sel(K^{ac},E[p^\infty])[\fP]$ has finite kernel and cokernel. Taking Pontryagin dual, it follows that $\operatorname{rank}_\Lambda X=1$.

Similarly, if $\epsilon(N^-)=-1$, then by the previous proposition, $\Sel_{\mathcal{F}_{\fP}}(K,W_{\fP})$ is annihilated by a fixed power of $p$. Therefore $\Sel_{\mathcal{F}_{\fP}}(K,T^{(j)})$ is also
annihilated by a fixed power of $p$ for any $j$. As $\Sel_{\cF_\fP}(K,T_\fP)=\varprojlim S_\fP/p^k S_\fP$, it follows from Proposition \ref{Shap}, that $\cS/\fP\cS$ is annihilated by a fixed power of $p$. Lemma \ref{torsion-free},  then shows that $\cS$ is $\Lambda$-torsion. A similar argument shows that $\operatorname{rank}_\Lambda X=0$. This completes the proof of (i).

(ii) Let $\fP$ be a height one prime ideal of $\Lambda$ which is generated by a distinguished polynomial $g$. 
For each positive integer $m$, consider the height one prime ideals
\begin{equation*}
    \fP_m=\begin{cases}
    (g+p^m)\Lambda &\mbox{ if } g\Lambda\neq p\Lambda\\
    ((\gamma-1)^m+p)\Lambda &\mbox{ if } g\Lambda=p\Lambda
    \end{cases}
\end{equation*}
It is clear that for large enough $m$, $\fP_m$ is a prime ideal which does not lie in $\Sigma_{\Lambda}$. By Hensel's lemma we have $\Lambda/\fP_m \cong \Lambda/\fP$. Then, as in the proof of \cite[Theorem 5.3.10]{mr} and using Proposition \ref{Shap}, we have 
\begin{equation*}
    \len_{\Z_p}(\Sel_{\mathcal{F}_{\fP_{\mm}}}(K,W_{{\fP}_{\mm}})/\Sel_{\mathcal{F}_{\fP_{\mm}}}(K,W_{{\fP}_{\mm}})_{\operatorname{div}})=m.\operatorname{rank}_{\Z_p}(\OO_{\fP}).\ord_{\fP}(\operatorname{char}(X_{\Lambda_{tor}}))
\end{equation*}
up to $O(1)$ as $m$ varies. Let $\cS_{\fP_m}=\Sel_{\mathcal{F}_{\fP_m}}(K,T_{\fP_{m}})$. Then 
\begin{equation*}
\begin{split}
    \len_{\Z_p}(\cS_{\fP_m}/\OO_{\fP_{m}}{\kappa}^{\infty}) & = m.\operatorname{rank}_{\Z_p}(\OO_{\fP}).\ord_{\fP}(\operatorname{char}(\cS/\Lambda{\kappa}^{\infty}))\\
    \len_{\Z_p}(\OO_{\fP_m}/\OO_{\fP_{m}}{\lambda}^{\infty}) &= m.\operatorname{rank}_{\Z_p}(\OO_{\fP}).\ord_{\fP}({\lambda}^{\infty})
    \end{split}
\end{equation*}
accordingly as $\epsilon(N^-)=1 \mbox{ or } -1$, up to $\OO(1)$ as $m$ varies. 
Let $e$ be the absolute ramification degree of $\OO_{\fP_m}$. This ramification degree is independent of $m$. Now for large enough $k$ by Proposition \ref{Sel length} we have
\begin{equation*}
\begin{split}
    \len_{\OO_{\fP_m}}(\Sel_{\mathcal{F}_{\fP_m}}(K,W_{\fP_m})/\Sel_{\mathcal{F}_{\fP_{\mm}}}(K,W_{{\fP}_{\mm}})_{\operatorname{div}}) & + 2e.{\delta}_{\fP_m}(k+\varepsilon)\\
    &=2.\len_{\OO_{\fP_m}}(\cS_{\fP_m}/\OO_{\fP_m}{\kappa}^{\infty})+2\varepsilon,\\
    \len_{\OO_{\fP_m}}(\Sel_{\mathcal{F}_{\fP_m}}(K,W_{\fP_m}))+ 2e.{\delta}_{\fP_m}(k+\varepsilon)&= 2.\len_{\OO_{\fP_m}}(\OO_{\fP_m}/\OO_{\fP_m}{\lambda}^{\infty})+2\varepsilon
\end{split}
\end{equation*}
accordingly as $\epsilon(N^-)=1 \mbox{ or } -1$. Since $\delta_{\fP_m}(k+\varepsilon) \geq 0$, so taking limit 
$m \to \infty$ we get (ii).

(iii) We show that ${\delta}_{\fP_m}(k)$ is bounded as $m,k$ vary for $j \geq k_0>\varepsilon$. 
Let $n(j) \in \mathcal{N}^{definite}_j$ be such that ${\lambda}_{n(j)}$ has nonzero image in $\Lambda/(\fP_m, p^{k_0})$. Then ${\lambda}_{n(j)}$ has nontrivial image in $\Lambda/(\fP_m, p^{k_0})$ for all $m \geq k_0$.
Define
\begin{equation*}
    C_m=\operatorname{coker}[\Lambda/\fP_m \hookrightarrow \OO_{\fP_m}].
\end{equation*}
Note that $C_m$ are finite and up to isomorphism do not depend on $m$. If $k_1$ is large enough that $p^{k_1-k_0}$ kills $C_m$, then we have the following commutative diagram
\begin{equation*}
\xymatrix{
      C_m[p^{k_1}] \ar[r] \ar[d] & \Lambda/(\fP_{\mm},p^{k_1}) \ar[r] \ar[d] & \OO_{\fP_{\mm}}/p^{k_1}{\OO_{\fP_{\mm}}} \ar[d] \\
      C_m[p^{k_0}] \ar[r] & \Lambda/(\fP_{\mm},p^{k_0}) \ar[r] & \OO_{\fP_{\mm}}/p^{k_0}{\OO_{\fP_{\mm}}}.
}    
\end{equation*}
It follows that $\lambda_{n(j)}$ has nontrivial image in $\OO_{\fP_m}/p^{k_1}{\OO_{\fP_m}}$, so 
$\pi^\varepsilon\lambda_{n(j)}$ has non-trivial image in
$\OO_{\fP_m}/p^{k_1+\varepsilon}{\OO_{\fP_m}}$. By Theorem \ref{Rigidity}, $\pi^\varepsilon\lambda_{n(j)}\in\mm^{u_{k_1+\varepsilon}}\OO_{\fP}/p^{k_1+\varepsilon}{\OO_{\fP_m}}$, and by the observation in equation \eqref{surj-ind}, for $j \geq k+\varepsilon \geq k_1+\varepsilon$ we have
\begin{equation*}
    \delta_{\fP_m}(k+\varepsilon,j) \leq \ind({\pi}^{\varepsilon}\lambda_{n(j)},{\mm}^{u_{k+\varepsilon}}\OO_{\fP_m}/p^{k+\varepsilon}{\OO_{\fP_m}}) < 2e(k_1+\varepsilon).
\end{equation*}
This implies $\delta_{\fP_m}(k) < 2e(k_1+\varepsilon)$ for all $k \geq k_1$ and any $m \geq k_0$.
\hfill\qedsymbol

\vspace{1cm}
\begin{minipage}{10cm}
Chandrakant S Aribam\\
Department of Mathematical Sciences,\\
IISER Mohali, \\ 
Sector 81, P.O. Manauli, Punjab 140306\\
Email: aribam@iisermohali.ac.in
\end{minipage}
\hfill
\begin{minipage}{10cm}
Pronay Kumar Karmakar\\
Department of Mathematical Sciences,\\
IISER Mohali, \\
Sector 81, P.O. Manauli, Punjab 140306\\
Email: pronaykarmakar@iisermohali.ac.in
\end{minipage}
\end{document}